\documentclass{article}[12pt]
\usepackage[title]{appendix}
\usepackage{lipsum}
\usepackage{amsfonts}
 \usepackage{algorithm,algorithmic}
\usepackage{graphicx}
\usepackage{epstopdf}
\usepackage{algorithmic}
\usepackage{mathrsfs}
\usepackage{mathtools}
\usepackage{color}
\usepackage{graphicx}
\usepackage{epstopdf}
\usepackage{stmaryrd}
 \usepackage{subfigure}
\usepackage{float}
\usepackage{amsmath}
    \usepackage{bm}
\usepackage{amsfonts,amssymb}
   \usepackage{natbib}
  \usepackage{dsfont}
\usepackage{amsfonts}
\usepackage{mathrsfs}
  \usepackage{pifont}
\numberwithin{equation}{section}
 \usepackage{amsthm}
 \newtheorem{theorem}{Theorem}[section]
\newtheorem{lemma}[theorem]{Lemma}

\newtheorem{remark}[theorem]{Remark}
\newtheorem{assumption}[theorem]{Assumption}
 \usepackage{amsmath}
 \usepackage[hidelinks]{hyperref}
 \usepackage{multirow}

 \textwidth=6in
\oddsidemargin=0in
\topmargin=-.5in
\textheight=9in

\begin{document}
\title{
An Immersed Finite Element Method for  Anisotropic Elliptic Interface Problems with Nonhomogeneous Jump Conditions
}

\author{
Haifeng Ji\footnotemark[1]
\qquad\quad
Zhilin Li\footnotemark[2] 
}
\footnotetext[1]{School of Science, Nanjing University of Posts and Telecommunications, Nanjing 210023, China  (hfji@njupt.edu.cn)}
\footnotetext[2]{Department of Mathematics, North Carolina State University, Raleigh, NC 27695, USA  (zhilin@math.ncsu.edu)}

\date{}
\maketitle

\begin{abstract}
A new finite element method (FEM) using meshes that do not necessarily align with the interface is developed for two- and three-dimensional anisotropic elliptic interface problems with nonhomogeneous jump conditions. The degrees of freedom of the proposed method are the same as those of traditional nonconforming FEMs, while the function space is modified to account for the jump conditions of the solution. The modified function space on an interface element is shown to exist uniquely, independent of the element's shape and the manner in which the interface intersects it. Optimal error estimates for the method, along with the usual bound on the condition number of the stiffness matrix, are proven, with the error constant independent of the interface's  location relative to the mesh. To solve the resulting linear system, a preconditioner is proposed in which a Gauss-Seidel smoother with the interface correction is employed to ensure robustness against large jumps in the diffusion matrix. Numerical experiments are provided to demonstrate the optimal convergence of  the proposed method and the efficiency of the preconditioner.
\end{abstract}

\textbf{Key words.}
anisotropic interface problem, non-homogeneous jump conditions, immersed finite element, unfitted mesh, preconditioner, error analysis

\textbf{MSC codes.}
65N15, 65N30, 35R05

\section{Introduction}\label{sec_introduce}
This paper concerns the numerical solution to anisotropic elliptic interface problems on unfitted meshes. 
Let $\Omega\subset \mathbb{R}^N$, $N\in\{2, 3\}$, be a convex polygonal (or polyhedral) domain, and let $\Gamma $  be a closed $C^2$-smooth ($N - 1$)-dimensional manifold  immersed in $\Omega$.  The interface $\Gamma$ divides $\Omega$ into two subdomains, $\Omega^+$ and $\Omega^-$, such that $\Gamma=\partial \Omega^-$.
We consider the following anisotropic elliptic interface problems with nonhomogeneous jump conditions,
\begin{subequations}\label{p1}
\begin{align}
-\nabla\cdot(\mathbb{B}\nabla u)&=f  \qquad \mbox{ in } \Omega^+\cup\Omega^-,\label{p1.1}\\
[u]_{\Gamma}&=g_D ~\quad \mbox{ on }  \Gamma,\label{p1.2}\\
[\mathbb{B}\nabla u\cdot n]_{\Gamma}&=g_N ~\quad \mbox{ on } \Gamma,\label{p1.3}\\
u&=0 \qquad \mbox{ on } \partial\Omega,\label{p1.4}
\end{align}
\end{subequations}
where $f\in L^2(\Omega)$, $g_D\in H^{3/2}(\Gamma)$ and $g_N\in H^{1/2}(\Gamma)$ are given functions, $n$ is the unit normal to $\Gamma$ pointing toward $\Omega^+$, and $[\cdot]_{\Gamma}$ stands for the jump across $\Gamma$, i.e., $[v]_{\Gamma}:=v^+|_{\Gamma}-v^-|_{\Gamma} $ with $v^\pm=v|_{\Omega^\pm}$.
The coefficient $\mathbb{B}(x)=(b_{ij}(x))$ is a symmetric positive definite  $N\times N$ matrix,  whose  components are piecewise smooth, but can be discontinuous across the interface $\Gamma$. Let $\mathbb{B}^\pm=\mathbb{B}|_{\Omega^\pm}$ and $b_{ij}^\pm=b_{ij}|_{\Omega^\pm}$. 
We assume that  $b_{ij}^\pm\in C^1(\overline{\Omega^\pm})$ and there exist positive constants $\beta^\pm_m$ and $\beta^\pm_M$ such that 
\begin{equation}\label{def_betaM}
\beta_{M}^\pm y^\top y\geq y^\top\mathbb{B}^\pm(x) y\geq \beta_{m}^\pm y^\top y, \quad \forall y \in\mathbb{R}^N  ~~\forall x\in\Omega^\pm.
\end{equation}

Equation (\ref{p1}) describes a common physical process in which diffusion is not only direction-dependent but also discontinuous. This problem has broad applications across various scientific fields, including materials science, crystal growth, Hele-Shaw flows, magnetic confinement fusion, and reservoir simulation, see for example, \cite{green2024efficient, chen2012effective, lu2020fully} and references therein.
Numerous numerical methods have been developed in the literature to address anisotropic problems or interface problems. For interface problems, considerable attention has focused on methods using unfitted meshes that are not necessarily aligned with the interface and enable  the use of a fixed background mesh regardless of interface configurations, enhancing computational efficiency and reducing both time and storage requirements when handling moving or complex interfaces, especially in three-dimensional spaces. 
There are numerous unfitted mesh methods in the literature, including Peskin's immersed boundary method \cite{Peskin1977Numerical}, the FEMs based on penalties \cite{babuvska1970finite,Barrett1987}, the immersed interface method (IIM) \cite{leveque1994immersed,dong2020fe}, the FEMs based on Nitsche’s method \cite{hansbo2002unfitted}, the extended FEM \cite{fries2010extended}, the cutFEM \cite{burman2018cut}, the immersed finite element (IFE) method \cite{li1998immersed,Li2003new}, and the immersed virtual element method \cite{Cao2021Immersed}, among others.

In this paper, we focus on the IFE method, which has a distinguishing feature compared to other unfitted FEMs, that is, its degrees of freedom are the same as those in traditional FEMs, while its basis functions are modified to account for the jump conditions of the solution. The IFE method can be regarded as a variant of the IIM, a finite difference method that modifies stencils at irregular points. This implies that the IFE space is isomorphic to the standard FE space on the same mesh, and thus the structure of the stiffness matrix is identical to that of traditional FEMs. This feature is particularly advantageous for designing efficient solvers and addressing moving interface problems \cite{guo2021SIAM}.
The first IFE method was proposed in \cite{li1998immersed} for solving one-dimensional interface problems and was recently revisited in \cite{wang2021robust}, where robust optimal error estimates with respect to the jump in discontinuous coefficients were derived, along with the development of an optimal multigrid algorithm. Extensions of the IFE method to high-dimensional problems, higher-degree formulations, and nonconforming cases, as well as related theoretical analyses, have been thoroughly explored; see, e.g., \cite{Li2003new,kafafy2005three,taolin2015siam,guo2020immersed, 201GUOSIAM,2021ji_IFE,ji2023immersed}.

However, relatively few works focus on three-dimensional IFE methods applied to problems with anisotropic coefficients and nonhomogeneous jump conditions (see \cite{hou2013weak, lu2020three}). To the best of our knowledge, no theoretical results are currently available for such methods. In this work, we propose an IFE method and provide a corresponding analysis for the interface problem.
For two-dimensional problems with nonhomogeneous jump conditions, several works exist, such as \cite{ygong-li, He2012Immersed, guzman2016higher, adjerid2023enriched,jo2024analysis}, in which various types of correction functions are constructed to handle the  nonhomogeneous jump conditions.
Given an interface element $T$, let $\Gamma_T=\Gamma\cap T$. The quantity  $\mathrm{avg}_{\Gamma_T}(g_N)=|\Gamma_T|^{-1}\int_{\Gamma_T}g_N$ is a natural choice for constructing the correction function on this element.
However, as demonstrated in \cite{zhang2023unfitted,ji2024mini},  $\mathrm{avg}_{\Gamma_T}(g_N)$ may become unbounded as $|\Gamma_T|\to 0$ when $g_N\in H^{1/2}(\Gamma)$.
Inspired by \cite{201GUOSIAM,zhuang2019high,ji2024mini}, we address this issue by using $\Gamma_T^{ext}$, an extension of $\Gamma_T$.
For the case with anisotropic coefficients, the author in \cite{ji2023immersed} shows that the Crouzeix-Raviart-type IFE basis functions, based on integral-value degrees of freedom, are unisolvent on arbitrary triangles or tetrahedra. In this work, we demonstrate that the correction functions are also unisolvent and provide a complete theoretical analysis.
It should be noted that IFE basis functions based on nodal-value degrees of freedom may not exist, even on isosceles right triangles, for anisotropic coefficients, as shown in \cite{An2014A}.  
In addition, we propose a preconditioner to efficiently solve the resulting linear system. To ensure robustness against large jumps in the diffusion coefficients, we employ a Gauss-Seidel smoother with the interface correction, inspired by  \cite{ludescher2020multigrid,chu2024multigrid}.

The rest of this paper is organized as follows. Section~\ref{sec_pre} reviews the notation and assumptions for unfitted meshes.
In Section~\ref{sec_finite}, the IFE method is presented, along with the explicit basis functions and correction functions. The theoretical analysis is provided in Section~\ref{sec_anal}.
Section~\ref{sec_cond} analyzes the condition number of the resulting linear system and introduces a preconditioner. 
Numerical examples are presented in Section~\ref{sec_num}. We conclude in the final section.

\section{Preliminaries}\label{sec_pre}
Let $k\geq 0$ be an integer and $1\leq p\leq \infty$.  We adopt the standard notation $W^k_p(D)$ for Sobolev spaces on a domain $D$, with the norm $\|\cdot\|_{W^k_p(D)}$ and the seminorm $|\cdot|_{W^k_p(D)}$. In particular, $W^k_2(D)$ is denoted by $H^{k}(D)$ with the norm  $\|\cdot\|_{H^{k}(D)}$ and the seminorm $|\cdot|_{H^{k}(D)}$. As usual, $H_0^1(D)=\{v\in H^1(D) : v=0 \mbox{ on }\partial D\}$.
For any subdomain $D\subset \mathbb{R}^N$, we define the subdomains  $D^\pm:=D\cap \Omega^\pm$ and the broken Sobolev spaces
\begin{equation*}
H^k(\cup D^\pm)=\{v\in L^2(D) : v|_{D^\pm} \in H^k(D^\pm)\},
\end{equation*}
equipped with the norm $\|\cdot\|_{H^k(\cup D^\pm)}$ and the  semi-norm $|\cdot|_{H^k(\cup D^\pm)}$, satisfying
$$
\|\cdot\|^2_{H^k(\cup D^\pm)}=\|\cdot\|^2_{H^k(D^+)}+\|\cdot\|^2_{H^k(D^-)}, \quad|\cdot|^2_{H^k(\cup D^\pm)}=|\cdot|^2_{H^k(D^+)}+|\cdot|^2_{H^k(D^-)}.
$$
Under the setting introduced in Section~\ref{sec_introduce}, problem (\ref{p1}) has a unique solution $u\in H^2(\cup\Omega^\pm)$ that satisfies the following a priori estimate (see \cite{bramble1996finite,McLean})
\begin{equation}\label{regular}
\|u\|_{H^2(\cup \Omega^\pm)}\leq C(\|f\|_{L^2(\Omega)}+\|g_N\|_{H^{1/2}(\Gamma)}+\|g_D\|_{H^{3/2}(\Gamma)}).
\end{equation}

For any $v\in H^m(\cup \Omega^\pm)$, $m>0$, let $v^\pm=v|_{\Omega^\pm}$. It is well known that there exist  extensions $v_E^\pm\in H^m(\mathbb{R}^N)$ such that (see \cite{Gilbargbook})
\begin{equation}\label{extension}
v_E^\pm|_{\Omega^\pm}=v^\pm~\mbox{and}~\| v_E^\pm\|_{H^m(\mathbb{R}^N)}\leq C\|v^\pm\|_{H^m(\Omega^\pm)}.
\end{equation}

Let $ \{\mathcal{T}_h\}_{h>0}$ be a family of shape-regular simplicial  triangulations of the domain $\Omega$, generated independently of the interface $\Gamma$. The mesh size of $\mathcal{T}_h$ is defined by $h=\max_{T\in\mathcal{T}_h}h_T$ with $h_T=\mathrm{diam}(T)$, where $\mathrm{diam}$ stands for the diameter. In this paper, the term ``face" refers to an edge in two dimensions and a face in three dimensions. 
We denote the set of faces of $\mathcal{T}_h$ by $\mathcal{F}_h$ and the set of faces of an element $T$ by $\mathcal{F}_T$.
The sets of interior and boundary faces are denoted  by $\mathcal{F}^\circ$ and  $\mathcal{F}^\partial$, respectively.
We adopt the convention that elements $T$ and faces $F$ are open sets. Then, the sets of interface elements and interface faces are defined by $\mathcal{T}_h^\Gamma =\{T\in\mathcal{T}_h :  T\cap \Gamma\not = \emptyset\}$ and $\mathcal{F}_h^\Gamma=\{ F\in \mathcal{F}_h : F \cap \Gamma\not = \emptyset\}$, respectively. The sets of non-interface elements and non-interface faces are denoted by $\mathcal{T}_h^{non}=\mathcal{T}_h\backslash\mathcal{T}_h^\Gamma$ and $\mathcal{F}_h^{non}=\mathcal{F}_h\backslash \mathcal{F}_h^\Gamma$, respectively.  For each $T\in\mathcal{T}_h$, we let $\Gamma_T=\Gamma\cap T$ and $T^\pm=T\cap \Omega^\pm$. It is evident that if $T\in\mathcal{T}_h^{non}$, then  $\Gamma_T=\emptyset$, and there exists an $s\in\{+,-\}$ such that $T^s=\emptyset$.


\begin{assumption}
 For each triangle $\triangle \in \mathcal{T}^\Gamma_h$ (if $N=2$) or  $\triangle \in \mathcal{F}^\Gamma_h$ (if $N=3$), the intersection $\Gamma\cap \partial \triangle$ consists of exactly two points, and these two points lie on different edges of the triangle $\triangle$.
 Additionally, for $N=3$, if $T \in \mathcal{T}^\Gamma_h$, then $\Gamma\cap \partial T$ consists of exactly one closed curve.
 \end{assumption} 
 
Since $\Gamma$ is $C^2$-smooth, the above assumption holds true if the mesh is fine enough. 
Elements containing faces that lie exactly on the interface can be classified as non-interface elements and handled by standard methods. For simplicity, we exclude these special cases from our discussion.
 
For each interface element $T\in\mathcal{T}_h^\Gamma$, 
we use a plane $L_T$ that is close to the interface so that the  piecewise IFE  basis functions can be constructed and analyzed.  Clearly, $L_T$ can be determined by a point $\bar{x}_{0,T}$ and a unit vector $\bar{n}_T$, such that $\bar{x}_{0,T}\in L_T$ and $\bar{n}_T \bot L_T$. We make the following assumption regarding the choice of $\bar{x}_{0,T}$ and $\bar{n}_T$:
\begin{assumption}\label{assum_x0t}
The plane $L_T$ and the interface $\Gamma$ intersect the same faces of $T$, and there exists a point $\tilde{x}_T\in\Gamma_T$ such that  
\begin{equation*}
\tilde{x}_T-\bar{x}_{0,T}  \perp L_T~  \mbox{ and }~|\bar{x}_{0,T}-\tilde{x}_T|+h_T|\bar{n}_T-n(\tilde{x}_T)|  \lesssim h_T^2,
\end{equation*}
where $|\cdot|$ denotes the 2-norm of a vector. 
\end{assumption}

Here and below, $\lesssim$ denotes less than or equal to up to a constant that is independent of the mesh size and the intersection configuration of the interface and the mesh. Similarly, $A \sim B$ means that $A\lesssim B$ and $ B\lesssim A$.

We can simply choose a point on $\Gamma_T$ as $\bar{x}_{0,T}$ and set $\bar{n}_T=n(\bar{x}_{0,T})$ to satisfy the assumption. Alternatively, an example that fulfills the assumption  can be obtained by using the signed distance function $d(x)|_{\Omega^\pm}=\pm\mathrm{dist}( x,\Gamma)$.
 Let $A_i$, $i=1,\cdots, N+1$ be the vertices of the element $T$. We choose $L_T=\{x :  d_{h,T}(x)=0\}$, where  $d_{h,T}\in\mathbb{P}_1(\mathbb{R}^N)$ satisfies $d_{h,T}(A_i)=d(A_i)$ for all $i=1,\cdots, N+1$. Let $\mathbf{p}_{L_T}$ be the orthogonal projection onto the plane $L_T$. The point $\bar{x}_{0,T}$  can then be chosen as $\bar{x}_{0,T}=\mathbf{p}_{L_T}(\tilde{x}_T)$, where $\tilde{x}_T\in \Gamma_T$ is an arbitrary point.

Let $\bar{t}_{i,T}$, $i=1,\cdots, N-1$, be standard basis vectors in the plane $L_T$ perpendicular to $\bar{n}_{T}$. We define a subregion of $L_T$ (a line segment for $N=2$ and a square for $N=3$) as follows:
\begin{equation*}
S_T=\{x\in L_T : x=\bar{x}_{0,T}+\sum_{i=1}^{N-1}\zeta_i\bar{t}_{i,T},~ 0\leq \zeta_i\leq \mu h_T \},
\end{equation*}
where $\mu>0$ is a fixed constant. Next, we define a mapping $\mathbf{p}_{h,T}: S_T\rightarrow \Gamma$  by $\mathbf{p}_{h,T}(x)=x+\varrho_h\bar{n}_T$, where  $\varrho_h$ is the smallest value such that $x+\varrho_h\bar{n}_T\in\Gamma$.  
\begin{assumption}\label{assum_phT}
The mapping $\mathbf{p}_{h,T}: S_T\rightarrow \Gamma$ is well-defined for each interface element $T$.
\end{assumption}

Define  $h_\Gamma=\max_{T\in\mathcal{T}_h^\Gamma}h_T$. The above assumption holds if $\mu h_\Gamma$ is sufficiently small  relative to the curvature of $\Gamma$. In practice,  $\mu$ is often chosen to be $0.5$.


To handle the jump condition (\ref{p1.2}), we introduce some points on $L_T$:
\begin{equation}\label{def_xit_bar}
\bar{x}_{i,T}=\bar{x}_{0,T}+ \mu h_T \bar{t}_{i,T}, \quad  i=1,\cdots, N-1,
\end{equation}
and the corresponding points on $\Gamma$: 
\begin{equation*}
\tilde{x}_{i,T}=\mathbf{p}_{h,T}(\bar{x}_{i,T}), \quad  i=0,\cdots, N-1.
\end{equation*}
Recalling Assumption~\ref{assum_x0t} and the fact that $\Gamma\in C^2$, we can use the tangent plane of $\Gamma$ at the point $\tilde{x}_T$ as a bridge to prove that 
\begin{equation}\label{est_x_t_x_star}
|\tilde{x}_{i,T}-\bar{x}_{i,T}|\lesssim h_T^2,\quad i=0,\cdots, N-1.
\end{equation}

To enforce the jump condition (\ref{p1.3}), we define the operator $\mathrm{avg}_{\Gamma_{T}^{ext}}$ by
\begin{equation*}
\mathrm{avg}_{\Gamma_{T}^{ext}}(v)=|\Gamma_{T}^{ext}|^{-1}\int_{\Gamma_{T}^{ext}}v,
\end{equation*}
where $\Gamma_T^{ext}=\{x :  x=\mathbf{p}_{h,T}(\bar{x}) ~ \forall \bar{x}\in S_T\}.$
Clearly, $|\Gamma_{T}^{ext}|\geq |S_T|  \gtrsim h_T^{N-1}$, where  $|\cdot|$ denotes the measure of the domain or manifold. Using  H\"older's inequality, we obtain 
\begin{equation}\label{ineq_ave}
|{\rm avg}_{\Gamma_{T}^{ext}}(v)|\leq |\Gamma_{T}^{ext}|^{-1/2}\|v\|_{L^2(\Gamma_{T}^{ext})}\lesssim h_T^{(1-N)/2}\|v\|_{L^2(\Gamma_{T}^{ext})}.
\end{equation}
For each interface element $T\in\mathcal{T}_h^\Gamma$, we introduce the smallest ball $\mathscr{B}_T$ such that $T$, $S_T$ and $\Gamma_{T}^{ext}$ are contained within $\mathscr{B}_T$. It is straightforward to verify that  $\mathrm{diam}(\mathscr{B}_T)\lesssim h_T$.
Let $\Gamma_{\mathscr{B}_T}=\Gamma\cap \mathscr{B}_T $. Using  (\ref{ineq_ave}) and the well-known trace inequality (see, e.g., \cite[Lemma 1]{guzman2018inf}),
\begin{equation}\label{tac_ine}
\|v\|_{L^2(\Gamma_{\mathscr{B}_T})}\lesssim (h_T^{-1/2}\|v\|_{L^2(\mathscr{B}_T)}+h_T^{1/2}|v|_{H^1(\mathscr{B}_T)})\quad \forall v\in H^1(\mathscr{B}_T),
\end{equation}
we obtain the following estimate:
\begin{equation}\label{trace_avg}
|{\rm avg}_{\Gamma_{T}^{ext}}(v)|\lesssim h_T^{-N/2}\|v\|_{L^2(\mathscr{B}_T)}+h_T^{1-N/2}|v|_{H^1(\mathscr{B}_T)}\quad \forall v\in H^1(\mathscr{B}_T).
\end{equation}

We will use the quantity $\mathrm{avg}_{\Gamma_{T}^{ext}}(g_N)$ to construct IFE functions (see (\ref{dis_jp_1})). It seems natural to use
$\mathrm{avg}_{\Gamma_{T}}(g_N)=|\Gamma_{T}|^{-1}\int_{\Gamma_{T}}g_N$ instead of  $\mathrm{avg}_{\Gamma_{T}^{ext}}(g_N)$.
However, since the interface $\Gamma$ can intersect the element $T$ in an arbitrary manner (i.e., $|\Gamma_T|$ may approach zero),  we observe that $\mathrm{avg}_{\Gamma_{T}}(g_N)$ could become unbounded. This issue arises from the low regularity of   $g_N\in H^{1/2}(\Gamma)$, as discussed in \cite[Example 3.1]{zhang2023unfitted}.

If $T\in\mathcal{T}_h^{non}$, we denote $\mathscr{B}_T=T$ for simplicity. We make the following assumption, which holds when the mesh is not strongly graded near the interface.
\begin{assumption}\label{assumption_finite}
There exists a constant $M$ such that for each $T \in  \mathcal{T}_h^\Gamma$, the number of elements in the set $\{T^\prime\in\mathcal{T}_h : \mathscr{B}_T\cap \mathscr{B}_{T^\prime}\not=\emptyset  \}$ is bounded by $M$.
\end{assumption}

Finally, for each interface element $T\in\mathcal{T}_h^\Gamma$, we define two constant matrices $\mathbb{B}_T^\pm=\mathbb{B}^\pm(\tilde{x}_{0,T})$. It is not hard to see that for all $x\in\Gamma_{T}^{ext}$,
\begin{equation}\label{esti_B}
|\bar{n}_T^\top\mathbb{B}_T^\pm-(n^\top\mathbb{B}^\pm)(x)|\leq |\bar{n}_T^\top(\mathbb{B}_T^\pm-\mathbb{B}^\pm(x))|+|(\bar{n}_T^\top-n^\top(x))\mathbb{B}^\pm(x)|\lesssim h_T.
\end{equation}

\section{Finite element approximation}\label{sec_finite}
\subsection{Local approximations}
Given an interface element $T\in\mathcal{T}_h^\Gamma$, we construct two linear functions $(\xi_T^+,\xi_T^-)\in \mathbb{P}_1(\mathbb{R}^N)\times\mathbb{P}_1(\mathbb{R}^N)$ to approximate $u_E^+$ and $u_E^-$ on this element, respectively. 
To achieve optimal approximations, $(\xi_T^+,\xi_T^-)$ should satisfy some discrete interface conditions according to  (\ref{p1.2}) and (\ref{p1.3}). To this end, we define linear functionals $\mathcal{J}_{i,T}: \mathbb{P}_1(\mathbb{R}^N)\times\mathbb{P}_1(\mathbb{R}^N)\rightarrow \mathbb{R}$ for all $i=0,\cdots,N$ by
\begin{equation}\label{def_J}
\mathcal{J}_{i,T}\left(~(\phi^+,\phi^-)^\top~\right)=\left\{
\begin{aligned}
&[\![\phi^\pm]\!](\bar{x}_{i,T}),\quad\quad\quad i=0,\cdots,N-1,   \\
&[\![\mathbb{B}_T^\pm\nabla \phi^\pm\cdot \bar{n}_T]\!],~\quad i=N.
\end{aligned}\right.
\end{equation} 
Here and below, we use the notation $[\![v^\pm]\!]:=v^+-v^-$ for any two functions $v^+$ and $v^-$.

Let  $L_T$ divide $T$ into two subelements $T_h^+$ and $T_h^-$, which approximate  $T^+$ and $T^-$, respectively. 
Given a domain $D\in\mathbb{R}^N$, let $\chi_{D}(x)$ be the characteristic function of $D$, i.e., $\chi_{D}$ is identically one  on $D$ and zero elsewhere. 
We then define linear functionals  $\mathcal{M}_{F,T}$ for all $F\in\mathcal{F}_T$ by
\begin{equation}\label{def_MFT}
\mathcal{M}_{F,T}\left(~(\phi^+,\phi^-)^\top~\right)=\frac{1}{|F|}\left(\int_{F} \chi_{T_h^+}\phi^++\int_{F} \chi_{T_h^-}\phi^-\right).
\end{equation}

From now on, we will  use the notation $\mathcal{J}_{i,T}(\phi^+,\phi^-)$ instead of $\mathcal{J}_{i,T}\left(~(\phi^+,\phi^-)^\top~\right)$, and $\mathcal{M}_{F,T}(\phi^+,\phi^-)$ instead of  $\mathcal{M}_{F,T}\left(~(\phi^+,\phi^-)^\top~\right)$ for simplicity. It should be  emphasized that  $\mathcal{M}_{F,T}$ and $\mathcal{J}_{i,T}$ are  linear functionals,  not  bilinear forms. For example, $\mathcal{M}_{F,T}(\phi_2^+,\phi_1^-)+\mathcal{M}_{F,T}(\phi_1^+,\phi_1^-)\not=\mathcal{M}_{F,T}(\phi_1^++\phi_2^+,\phi_1^-)$, and in fact, we have the identity $\mathcal{M}_{F,T}(\phi_2^+,\phi_1^-)+\mathcal{M}_{F,T}(\phi_1^+,\phi_1^-)=\mathcal{M}_{F,T}(\phi_1^++\phi_2^+,2\phi_1^-)$.

We state the following lemma, the proof of which is postponed to subsection~\ref{subsec_proof}.
\begin{lemma}\label{lem_unique}
For each interface element $T$ (a triangle for $N=2$ or a tetrahedron for $N=3$) without any angle restrictions,  the pair $(\phi^+,\phi^-)\in\mathbb{P}_1(\mathbb{R}^N)\times\mathbb{P}_1(\mathbb{R}^N)$ is uniquely determined by $\mathcal{J}_{i,T}$ for all $i=0, \cdots, N$ and $\mathcal{M}_{F,T}$ for all $F\in\mathcal{F}_T$.
\end{lemma}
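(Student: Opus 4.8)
The plan is to argue by dimension counting. Since $\dim\big(\mathbb{P}_1(\mathbb{R}^N)\times\mathbb{P}_1(\mathbb{R}^N)\big)=2(N+1)$ equals the total number of functionals (the $N+1$ functionals $\mathcal{J}_{i,T}$, $i=0,\dots,N$, together with the $N+1$ functionals $\mathcal{M}_{F,T}$, one per face of the simplex), it suffices to prove that the only pair annihilated by all of them is the trivial one. So I would assume $\mathcal{J}_{i,T}(\phi^+,\phi^-)=0$ for $i=0,\dots,N$ and $\mathcal{M}_{F,T}(\phi^+,\phi^-)=0$ for all $F\in\mathcal{F}_T$, and show $(\phi^+,\phi^-)=(0,0)$.

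First I would exploit the continuity functionals. The conditions $\mathcal{J}_{i,T}=0$ for $i=0,\dots,N-1$ say that the affine function $\phi^+-\phi^-$ vanishes at the $N$ points $\bar{x}_{0,T},\dots,\bar{x}_{N-1,T}$, which by \eqref{def_xit_bar} are affinely independent and span the hyperplane $L_T$. Hence $\phi^+-\phi^-$ vanishes on all of $L_T$ and must be proportional to the signed distance $d_T(x):=\bar{n}_T\cdot(x-\bar{x}_{0,T})$; that is, $\phi^+-\phi^-=c\,d_T$ for some scalar $c$. In particular the piecewise function $\phi:=\phi^+\chi_{T_h^+}+\phi^-\chi_{T_h^-}$ is continuous on $T$ and can be written $\phi=\phi^-+c\,w$ with $w:=\max(d_T,0)=d_T\,\chi_{T_h^+}$, using that $d_T>0$ on $T_h^+$.

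Next I would use the face functionals. Because $\mathcal{M}_{F,T}(\phi^+,\phi^-)=|F|^{-1}\int_F\phi$ is simply the average of the continuous function $\phi$ over $F$, the hypotheses give $|F|^{-1}\int_F\phi^-=-c\,|F|^{-1}\int_F w$ for every $F\in\mathcal{F}_T$. The face-average functionals are unisolvent on $\mathbb{P}_1(\mathbb{R}^N)$ (these are the Crouzeix--Raviart degrees of freedom), so $\phi^-=-c\,w^*$, where $w^*\in\mathbb{P}_1(\mathbb{R}^N)$ is the unique affine function sharing the face averages of $w$. The key computation is to evaluate $\nabla w^*$: from the identity $|T|\,\nabla w^*=\int_{\partial T}w^*\,\nu=\int_{\partial T}w\,\nu$ (valid since $w^*$ matches the face averages of $w$ and $\nu$ is constant on each face) together with the divergence theorem applied to the Lipschitz function $w$, I get $\int_{\partial T}w\,\nu=\int_T\nabla w=|T_h^+|\,\bar{n}_T$, because $\nabla w=\bar{n}_T$ on $T_h^+$ and $\nabla w=0$ on $T_h^-$. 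Hence $\nabla w^*=\theta\,\bar{n}_T$ with $\theta:=|T_h^+|/|T|\in(0,1)$; remarkably, the tangential part drops out entirely.

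Finally I would invoke the flux functional. Writing $\nabla\phi^-=-c\,\theta\,\bar{n}_T$ and $\nabla\phi^+=\nabla\phi^-+c\,\bar{n}_T=c(1-\theta)\bar{n}_T$, the condition $\mathcal{J}_{N,T}=0$, i.e. $\bar{n}_T^\top\mathbb{B}_T^+\nabla\phi^+=\bar{n}_T^\top\mathbb{B}_T^-\nabla\phi^-$, becomes $c\big[(1-\theta)\,\bar{n}_T^\top\mathbb{B}_T^+\bar{n}_T+\theta\,\bar{n}_T^\top\mathbb{B}_T^-\bar{n}_T\big]=0$. Since $\mathbb{B}_T^\pm$ are symmetric positive definite and $\theta\in(0,1)$, the bracket is strictly positive, forcing $c=0$; then $\phi^-=-c\,w^*=0$ and $\phi^+=\phi^-+c\,d_T=0$. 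I expect the main obstacle to be precisely the computation of $\nabla w^*$: showing that the Crouzeix--Raviart interpolant of the ramp $w$ has gradient exactly $\theta\,\bar{n}_T$, purely normal with no tangential component, is what turns the bracket in the flux identity into a strictly positive convex combination of $\bar{n}_T^\top\mathbb{B}_T^\pm\bar{n}_T$, and hence nonzero, irrespective of the shape of $T$, the way $L_T$ cuts it, or the anisotropy of $\mathbb{B}_T^\pm$.
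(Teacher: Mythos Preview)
Your proposal is correct and follows essentially the same approach as the paper: the paper also reduces to $\phi^+-\phi^-=\alpha\,d_{L_T}$, identifies $\phi^-=-\alpha\,\Pi_T(\chi_{T_h^+}d_{L_T})$ via the face-average conditions, and then uses the key identity $\nabla\Pi_T(\chi_{T_h^+}d_{L_T})=(|T_h^+|/|T|)\,\bar{n}_T$ (your computation of $\nabla w^*$, which the paper isolates as a separate lemma proved by exactly your divergence-theorem argument) to turn the flux condition into a strictly positive scalar times $\alpha$. Your convex-combination phrasing of the positivity is equivalent to the paper's bound $c_T\ge\min(1,\beta_m^-/\beta_M^+)$.
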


According to the above lemma, there exists a unique pair $(\xi_T^+,\xi_T^-)\in \mathbb{P}_1(\mathbb{R}^N)\times\mathbb{P}_1(\mathbb{R}^N)$ such that 
\begin{subequations}\label{dis_jp}
\begin{align}
&\mathcal{J}_{i,T}(\xi_T^+,\xi_T^-)=\left\{
\begin{aligned}
&g_D(\tilde{x}_{i,T}),\quad \quad~ i=0,\cdots,N-1,\\
&\mathrm{avg}_{\Gamma_{T}^{ext}}(g_N),\quad i=N,
\end{aligned}\right. \label{dis_jp_1}\\
&\mathcal{M}_{F,T}(\xi_T^+,\xi_T^-)=\mathcal{M}_{F,T}(u_E^+,u_E^-), \quad \forall F\in\mathcal{F}_T.\label{dis_jp_2}
\end{align}
\end{subequations}

The following lemma establishes that $\xi_T^+$ and $\xi_T^-$ provide optimal approximations of  $u_E^+$ and $u_E^-$, respectively. 
\begin{lemma}\label{lem_xi_app}
For each $T\in\mathcal{T}_h^\Gamma$, the following estimate holds:
\begin{equation}\label{result_lem_xi}
|u_E^\pm-\xi_T^\pm|_{H^m(T)}\lesssim h_T^{2-m}\sum_{s=\pm}\|u_E^s\|_{H^2(\mathscr{B}_T)}, \quad m=0, 1.
\end{equation}
\end{lemma}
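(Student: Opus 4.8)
The plan is to compare $\xi_T^\pm$ against the degree-one averaged Taylor (Bramble--Hilbert) polynomials $\pi^\pm\in\mathbb{P}_1(\mathbb{R}^N)$ of $u_E^\pm$ over the ball $\mathscr{B}_T$, which satisfy the classical local estimates $|u_E^\pm-\pi^\pm|_{H^m(\mathscr{B}_T)}\lesssim h_T^{2-m}\|u_E^\pm\|_{H^2(\mathscr{B}_T)}$ for $m=0,1$. Since $T\subset\mathscr{B}_T$ and $\mathrm{diam}(\mathscr{B}_T)\lesssim h_T$, the triangle inequality reduces the claim to controlling the linear polynomial pair $\eta^\pm:=\pi^\pm-\xi_T^\pm$. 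The key structural point is that, because $\xi_T^\pm$ reproduces the prescribed data in (\ref{dis_jp}), each functional in (\ref{def_J}) and (\ref{def_MFT}) applied to $\eta^\pm$ collapses to a consistency error between $\pi^\pm$ and the exact interface data.

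First I would establish a quantitative, interface-independent stability bound for linear pairs: for every $(\eta^+,\eta^-)\in\mathbb{P}_1(\mathbb{R}^N)\times\mathbb{P}_1(\mathbb{R}^N)$,
\begin{equation*}
|\eta^\pm|_{H^m(T)}\lesssim h_T^{N/2-m}\Big(\sum_{i=0}^{N-1}|\mathcal{J}_{i,T}(\eta^+,\eta^-)|+h_T|\mathcal{J}_{N,T}(\eta^+,\eta^-)|+\sum_{F\in\mathcal{F}_T}|\mathcal{M}_{F,T}(\eta^+,\eta^-)|\Big).
\end{equation*}
This follows by affinely mapping $T$ to a unit-size reference element, on which the value functionals $\mathcal{J}_{0,T},\dots,\mathcal{J}_{N-1,T}$ and $\mathcal{M}_{F,T}$ are scale invariant while the gradient functional $\mathcal{J}_{N,T}$ picks up a factor $h_T^{-1}$ (hence the weight $h_T$). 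On the reference element Lemma~\ref{lem_unique} asserts that the degrees-of-freedom-to-coefficient map is invertible for every admissible interface configuration; a compactness argument over the compact set of such configurations, together with continuity of the inverse, upgrades invertibility to a uniform bound. Here the use of the \emph{extended} interface $\Gamma_T^{ext}$ (of measure $\sim h_T^{N-1}$) in place of $\Gamma_T$ is essential: it keeps $\mathcal{J}_{N,T}$ nondegenerate even as $|\Gamma_T|\to0$, so no configuration forces the stability constant to blow up.

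Next I would bound the three groups of functionals applied to $\eta^\pm$, each by $h_T^{2-N/2}\sum_{s=\pm}\|u_E^s\|_{H^2(\mathscr{B}_T)}$, so that the stability estimate yields precisely $h_T^{2-m}$. For the face functionals, $\mathcal{M}_{F,T}(\eta^+,\eta^-)=\mathcal{M}_{F,T}(u_E^+-\pi^+,u_E^--\pi^-)$ by (\ref{dis_jp_2}), and a scaled trace inequality of the form (\ref{tac_ine}) on $\mathscr{B}_T$ gives $\|u_E^s-\pi^s\|_{L^2(F)}\lesssim h_T^{3/2}\|u_E^s\|_{H^2(\mathscr{B}_T)}$, whence $|\mathcal{M}_{F,T}(\eta^+,\eta^-)|\lesssim h_T^{(4-N)/2}=h_T^{2-N/2}$ times $\sum_s\|u_E^s\|_{H^2(\mathscr{B}_T)}$. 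For the value-jump functionals I would use $g_D(\tilde{x}_{i,T})=(u_E^+-u_E^-)(\tilde{x}_{i,T})$ from (\ref{p1.2}) and split $\mathcal{J}_{i,T}(\eta^+,\eta^-)$ into the pointwise polynomial errors $(u_E^s-\pi^s)(\tilde{x}_{i,T})$, controlled by a scaled Sobolev embedding ($H^2\hookrightarrow C^0$), plus the term $\nabla(\pi^+-\pi^-)\cdot(\tilde{x}_{i,T}-\bar{x}_{i,T})$, controlled by (\ref{est_x_t_x_star}) together with the inverse-type bound $|\nabla\pi^s|\lesssim h_T^{-N/2}\|u_E^s\|_{H^2(\mathscr{B}_T)}$.

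The flux functional is the delicate one. Since $\mathcal{J}_{N,T}(\pi^+,\pi^-)$ is constant it equals its own $\mathrm{avg}_{\Gamma_T^{ext}}$, so by (\ref{dis_jp_1}) and $g_N=[\![\mathbb{B}^\pm\nabla u_E^\pm\cdot n]\!]$ on $\Gamma_T^{ext}$ from (\ref{p1.3}) I can write $\mathcal{J}_{N,T}(\eta^+,\eta^-)=\mathrm{avg}_{\Gamma_T^{ext}}\big([\![\,\mathbb{B}_T^\pm\nabla\pi^\pm\cdot\bar{n}_T-\mathbb{B}^\pm\nabla u_E^\pm\cdot n\,]\!]\big)$. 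Adding and subtracting $\bar{n}_T^\top\mathbb{B}_T^\pm\nabla u_E^\pm$ splits each signed component into $\bar{n}_T^\top\mathbb{B}_T^\pm\nabla(\pi^\pm-u_E^\pm)$, handled by the averaging bound (\ref{trace_avg}), and $(\bar{n}_T^\top\mathbb{B}_T^\pm-n^\top\mathbb{B}^\pm)\nabla u_E^\pm$, handled by the coefficient/normal consistency (\ref{esti_B}) combined with (\ref{ineq_ave}) and (\ref{tac_ine}); both contribute $h_T^{1-N/2}\sum_s\|u_E^s\|_{H^2(\mathscr{B}_T)}$, and the weight $h_T$ in the stability estimate restores the order $h_T^{2-N/2}$. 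Assembling the three bounds gives $|\eta^\pm|_{H^m(T)}\lesssim h_T^{2-m}\sum_s\|u_E^s\|_{H^2(\mathscr{B}_T)}$, which with the Bramble--Hilbert estimate proves (\ref{result_lem_xi}). I expect the main obstacle to be the interface-independent stability bound of the second step---converting the qualitative unisolvence of Lemma~\ref{lem_unique} into a constant uniform over all cut configurations---rather than the routine, if lengthy, functional estimates.
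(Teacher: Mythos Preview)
Your overall architecture coincides with the paper's: insert a linear approximant of $u_E^\pm$ (the paper uses the $L^2$ projection $J_Tu_E^\pm$ onto $\mathbb{P}_1(\mathscr{B}_T)$ rather than an averaged Taylor polynomial, but this is immaterial), reduce to the polynomial pair $\eta^\pm$, establish the scaled stability bound
\[
|\eta^\pm|_{H^m(T)}\lesssim h_T^{N/2-m}\Big(\textstyle\sum_{i<N}|\mathcal{J}_{i,T}|+h_T|\mathcal{J}_{N,T}|+\sum_F|\mathcal{M}_{F,T}|\Big),
\]
and then estimate each functional. Your Steps~3--5 match the paper's Steps~2--4 almost line by line.

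The genuine difference is how the stability bound is obtained. The paper does \emph{not} argue by compactness; it expands $\eta^\pm$ in the dual basis $\{\psi_{i,T}^\pm,\phi_{F,T}^\pm\}$ constructed in \S\ref{subsec_expli} and invokes the explicit $W^m_\infty$ bounds (\ref{esti_basis}) and (\ref{esti_aux}), which were derived directly from the closed-form expressions (\ref{expli_basis})--(\ref{ome_N_identi}) together with the lower bound (\ref{esti_cT}) on $c_T$. This is fully constructive and makes the dependence on $\beta_m^\pm,\beta_M^\pm$ transparent. Your compactness route is legitimate but requires that the parameter space---cut position \emph{and} the SPD matrices $\mathbb{B}_T^\pm$ (which enter $\mathcal{J}_{N,T}$)---be compact, and that unisolvence persist on its closure; the paper's estimate (\ref{esti_cT}) is effectively what guarantees this, so you would end up re-deriving the same ingredient in disguise.

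One small correction: the extension $\Gamma_T^{ext}$ plays no role in the \emph{stability} of the functionals, since $\mathcal{J}_{N,T}$ in (\ref{def_J}) is the constant $[\![\mathbb{B}_T^\pm\nabla\phi^\pm\cdot\bar n_T]\!]$ and never sees $\Gamma_T$ or $\Gamma_T^{ext}$. Its purpose is solely on the data side, ensuring that $\mathrm{avg}_{\Gamma_T^{ext}}(g_N)$ (and hence $\alpha_{N,T}$ in your Step~4) admits the bound (\ref{trace_avg}) uniformly in the cut; this is exactly where your flux-functional estimate uses it.
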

The proof of this lemma is provided in Appendix~\ref{sec_proof}.  

%
%
%
\subsection{Correction functions}
Clearly, $(\xi_T^+,\xi_T^-)$ can be decomposed as
\begin{equation}\label{xi_deco}
(\xi_T^+,\xi_T^-)=(\xi_T^{0,+},\xi_T^{0,-})+(\xi_T^{J,+},\xi_T^{J,-}),
\end{equation}
where $(\xi_T^{J,+},\xi_T^{J,-})\in \mathbb{P}_1(\mathbb{R}^N)\times\mathbb{P}_1(\mathbb{R}^N)$ satisfies 
\begin{equation}\label{def_corr}
\begin{aligned}
&\mathcal{J}_{i,T}(\xi_T^{J,+},\xi_T^{J,-})=\left\{
\begin{aligned}
&g_D(\tilde{x}_{i,T}), \quad~ i=0,\cdots,N-1,\\
&\mathrm{avg}_{\Gamma_{T}^{ext}}(g_N),\quad\quad i=N,
\end{aligned}\right.\\
&\mathcal{M}_{F,T}(\xi_T^{J,+},\xi_T^{J,-})=0, \quad \forall F\in\mathcal{F}_T,
\end{aligned}
\end{equation}
and $(\xi_T^{0,+},\xi_T^{0,-})\in \mathbb{P}_1(\mathbb{R}^N)\times\mathbb{P}_1(\mathbb{R}^N)$ satisfies 
\begin{equation}\label{def_xi_J}
\mathcal{J}_{i,T}(\xi_T^{0,+},\xi_T^{0,-})=0, \quad \mathcal{M}_{F,T}(\xi_T^{0,+},\xi_T^{0,-})=\mathcal{M}_{F,T}(u_E^+,u_E^-),
\end{equation}
 for all $i=0,\cdots, N$ and for all $F\in\mathcal{F}_T$.
Roughly speaking, $(\xi_T^{J,+},\xi_T^{J,-})$ captures the nonhomogeneous jumps of the exact solution, while $(\xi_T^{0,+},\xi_T^{0,-})$ handles the jump in the matrix coefficient $\mathbb{B}(x)$. 

Based on the above observations, we define the correction function  $u_h^J$ as follows:
\begin{equation}\label{def_corr1}
u_h^J|_{T^\pm}=\left\{
\begin{aligned}
&\xi_T^{J,\pm}\qquad \mbox{ if } T\in\mathcal{T}_h^\Gamma,\\
&0\qquad ~~~~\mbox{ if } T\in\mathcal{T}_h^{non}.
\end{aligned}\right.
\end{equation}
The existence and uniqueness of $u_h^J$ are guaranteed by Lemma~\ref{lem_unique}. We note that the correction function is precomputed using $g_D$ and $g_N$. An explicit formula for $u_h^J$ is provided in subsection~\ref{sec_exp_correc}.

\subsection{IFE spaces and interpolation operators}
According to (\ref{def_xi_J}), the local IFE space and its degrees of freedom are defined as follows.
On each interface element $T\in\mathcal{T}_h^{\Gamma}$, we define the local IFE space as
\begin{equation}\label{def_vhT}
V_h^{\mathrm{IFE}}(T)=\{ v : v|_{T^\pm}=v^\pm, v^\pm\in\mathbb{P}_1(\mathbb{R}^N),~ \mathcal{J}_{i,T}(v^+,v^-)=0  ~\forall i=0,\cdots,N\},
\end{equation}
and its degrees of freedom as
\begin{equation}\label{def_NFT}
\mathcal{N}_{F,T}^{\mathrm{IFE}}(v)=\mathcal{M}_{F,T}(v^+,v^-), \quad\forall  F\in\mathcal{F}_T.
\end{equation}
The associated IFE interpolation of $u$  is then defined as
\begin{equation*}
\Pi_T^\mathrm{IFE} u\in V_h^{\mathrm{IFE}}(T), \quad \mathcal{N}^\mathrm{IFE}_{F,T}(\Pi_T^\mathrm{IFE} u)=\mathcal{M}_{F,T}(u_E^+, u_E^-), \quad\forall  F\in\mathcal{F}_T.
\end{equation*}

In view of the above construction, it is not hard to see that $\xi_T^\pm=(\Pi_T^{\mathrm{IFE}} u+u_h^J|_T)^\pm$. As a consequence of Lemma~\ref{lem_xi_app}, we obtain the following result.
\begin{lemma}\label{lem_pi_t_esti}
For each $T\in\mathcal{T}_h^\Gamma$, the following estimate holds:
\begin{equation}
|u-(\Pi_T^{\mathrm{IFE}} u+u_h^J)|_{H^m(\cup T^\pm)}\lesssim h_T^{2-m}\sum_{s=\pm}\|u_E^s\|_{H^2( \mathscr{B}_T)},\quad m=0,1.
\end{equation}
\end{lemma}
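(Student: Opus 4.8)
The plan is to obtain this estimate as an immediate corollary of Lemma~\ref{lem_xi_app}, reinterpreting the bound on $|u_E^\pm-\xi_T^\pm|$ in terms of the interpolation plus the correction function. The key observation, already recorded in the excerpt, is the identity $\xi_T^\pm=(\Pi_T^{\mathrm{IFE}} u+u_h^J|_T)^\pm$. This follows from the decomposition (\ref{xi_deco}) together with the defining relations (\ref{def_corr}) and (\ref{def_xi_J}): the pair $(\xi_T^{J,+},\xi_T^{J,-})$ is precisely the local representation of $u_h^J$ on an interface element, while $(\xi_T^{0,+},\xi_T^{0,-})$ coincides with $\Pi_T^{\mathrm{IFE}} u$, since both lie in the local IFE space and share the degrees of freedom $\mathcal{M}_{F,T}(u_E^+,u_E^-)$ for all $F\in\mathcal{F}_T$; uniqueness from Lemma~\ref{lem_unique} then pins them down.

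First I would expand the broken seminorm on the left-hand side according to its definition,
\[
|u-(\Pi_T^{\mathrm{IFE}} u+u_h^J)|_{H^m(\cup T^\pm)}^2
=\sum_{s=\pm}|u^s-(\Pi_T^{\mathrm{IFE}} u+u_h^J)|_{H^m(T^s)}^2,
\]
where $u^s=u|_{\Omega^s}$ is viewed on $T^s=T\cap\Omega^s$. On each piece the extension agrees with the solution, $u^s=u_E^s$ by (\ref{extension}), and by the identity above $(\Pi_T^{\mathrm{IFE}} u+u_h^J)|_{T^s}=\xi_T^s$, so each summand equals $|u_E^s-\xi_T^s|_{H^m(T^s)}^2$.

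Next I would enlarge the integration domain from $T^s$ to $T$, using $T^s\subset T$ and the monotonicity of the seminorm under this inclusion to get $|u_E^s-\xi_T^s|_{H^m(T^s)}\le|u_E^s-\xi_T^s|_{H^m(T)}$. Applying Lemma~\ref{lem_xi_app} to each term then gives $|u_E^s-\xi_T^s|_{H^m(T)}\lesssim h_T^{2-m}\sum_{r=\pm}\|u_E^r\|_{H^2(\mathscr{B}_T)}$. Summing the two squared contributions over $s=\pm$ and taking the square root yields the claimed bound, the finite sum over $s$ being absorbed into the implied constant.

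The computation is entirely routine; the only point deserving care is the verification of the identity $\xi_T^\pm=(\Pi_T^{\mathrm{IFE}} u+u_h^J|_T)^\pm$, which is where the construction of the IFE space and the correction function is genuinely used. There is no real analytic obstacle here, since all the approximation content has already been established in Lemma~\ref{lem_xi_app}.
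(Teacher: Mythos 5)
Your proposal is correct and follows essentially the same route as the paper, which obtains the lemma directly from the identity $\xi_T^\pm=(\Pi_T^{\mathrm{IFE}} u+u_h^J|_T)^\pm$ and Lemma~\ref{lem_xi_app}; the paper simply states this identity as evident from the construction, while you verify it via the decomposition (\ref{xi_deco}) and the uniqueness in Lemma~\ref{lem_unique}. The remaining steps (restriction to $T^\pm$, domain enlargement, summation) are the routine details the paper leaves implicit, and they are carried out correctly.
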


On each non-interface element $T\in\mathcal{T}_h^{non}$, we use the standard Crouzeix--Raviart finite element, whose space and degrees of freedom are defined as follows (see \cite{crouzeix1973conforming}):
\begin{equation*}
V_h(T)=\mathbb{P}_1(T),\quad \mathcal{N}_{F}(v)=|F|^{-1}\int_Fv, \quad  F\in\mathcal{F}_T.
\end{equation*}
To keep the notation concise, for $T\in\mathcal{T}_h^{non}$, we set $V_h^{\mathrm{IFE}}(T)=V_h(T)$ and $\mathcal{N}_{F,T}^{\mathrm{IFE}}=\mathcal{N}_{F}$.
We then define the following global IFE spaces:
\begin{equation*}
\begin{aligned}
&V_h^{\mathrm{IFE}}=\{ v : v|_T\in V_h^{\mathrm{IFE}}(T)~ \forall T\in\mathcal{T}_h,~ \mathcal{N}^{\mathrm{IFE}}_{F,T_1^F}(v|_{T_1^F})=\mathcal{N}^{\mathrm{IFE}}_{F,T_2^F}(v|_{T_2^F}) ~ \forall F\in\mathcal{F}^\circ\},\\
&V_{h,0}^{\rm IFE}=\{v\in V_{h}^{\rm IFE} :  \int_F v=0~~ \forall F\in\mathcal{F}_h^\partial\},
\end{aligned}
\end{equation*}
where $T_1^F$ and $T_2^F$ are the elements sharing the face$F$. The global IFE interpolation operator $\Pi_h^\mathrm{IFE}: H^2(\cup\Omega^\pm) \rightarrow V_h^{\mathrm{IFE}} $ is defined as
\begin{equation}
(\Pi_h^\mathrm{IFE} u)|_T=\left\{
\begin{aligned}
&\Pi_T^\mathrm{IFE} u \quad~\mbox{ if } T\in\mathcal{T}_h^\Gamma, \\
&\Pi_T u \quad \quad\mbox{ if } T\in\mathcal{T}_h^{non}, 
\end{aligned}\right.
\end{equation}
where $\Pi_T: W(T)\rightarrow V_h(T)$ is the standard nonconforming FE interpolation operator defined by
\begin{equation}\label{def_st_pi}
\mathcal{N}_{F}(\Pi_T v)=\mathcal{N}_{F}(v)\quad \forall F\in\mathcal{F}_T.
\end{equation}
Here, $W(T):=\{v\in L^2(T) : v|_{F}\in L^2(F) ~\forall F\in \mathcal{F}_T\}$ is used to ensure that $\mathcal{N}_{F}(v)$ is well-defined.
Using Lemma~\ref{lem_pi_t_esti}, Assumption \ref{assumption_finite}, the standard interpolation error estimates on non-interface elements, and the extension stability (\ref{extension}),  one can easily derive
\begin{equation}\label{int_err_00}
\sum_{T\in\mathcal{T}_h}|u-(\Pi_h^{\mathrm{IFE}} u+u_h^J)|^2_{H^m(\cup T^\pm)}\lesssim h^{4-2m}\|u\|^2_{H^2(\cup\Omega^\pm)}, \quad m=0,1,
\end{equation}
which indicates that the space $V_h^{\mathrm{IFE}}+\{u_h^J\}$ has the optimal approximation capabilities. We emphasize that the hidden constant in the above inequality is independent of the interface location relative to the mesh, meaning that optimal approximation holds even when small cuts exist in the mesh.

\subsection{The IFE method}
We begin by introducing the notation for jumps and averages. 
For each face $F\in\mathcal{F}_h^\circ$, we associate a fixed unit normal vector $n_F$, and let $T_1^F$ and $T_2^F$ be the elements sharing $F$ as a face, with $n_F$ pointing from  $T^F_1$ to $T^F_2$. 
We then define the jump and average on $F$ as $[v]_F=v|_{T_1^F}-v|_{T_2^F}$ and $\{v\}_F=(v|_{T^F_1}+v|_{T^F_2})/2$, respectively. 
For each $F\in \mathcal{F}_h^\partial$, we set  $n_F$ to be the unit outward normal to $\partial \Omega$ and define $[v]_F=\{v\}_F=v$.   
Finally, on the interface $\Gamma$, the average is defined as $\{v\}_\Gamma=(v^++v^-)/2$.

The immersed finite method is formulated as follows:
 Find $u_h=u_h^{hom}+u_h^J$ with $u_h^{hom}\in V_{h,0}^{\rm IFE}$ such that  
\begin{equation}\label{method2}
A_h(u_h^{hom},v_h)=\int_{\Omega}fv_h-\int_\Gamma g_N\{v_h\}_\Gamma -A_h(u_h^J,v_h) ~~\forall v_h\in V_{h,0}^{\rm IFE}.
\end{equation}
The bilinear form $A_h(\cdot,\cdot)$ is given by
$$A_h(v,w)=a_h(v,w)+b_h(v,w)+s_h(v,w),$$
where
\begin{subequations}\label{def_AAH}
\begin{align}
&a_h(v,w)=\int_\Omega (\mathbb{B} \nabla_h v)\cdot\nabla_h w,\label{def_AAH_ah}\\
&b_h(v,w)=-\sum_{F\in\mathcal{F}_h^\Gamma}\int_F\left(\{(\mathbb{B}\nabla_h v)\cdot n_F\}_F[w]_F+\{(\mathbb{B}\nabla_h w)\cdot n_F\}_F[v]_F\right),\label{def_AAH_bh}\\
&s_h(v,w)=8\sum_{F\in\mathcal{F}_h^\Gamma}\int_{T_1^F\cup T_2^F}(\mathbb{B}\mathbf{r}_F([v]_F))\cdot \mathbf{r}_F([w]_F),\label{def_AAH_sh}
\end{align}
\end{subequations}
where $\nabla_h$ is a piecewise gradient defined by $(\nabla_h v)|_{T^\pm}=\nabla v|_{T^\pm}$ for all $T\in \mathcal{T}_h$, and $\mathbf{r}_F$ is the local  {\em  lifting operator} defined as follows.
For each interface face $F\in\mathcal{F}_h^\Gamma$, we first define the space 
\begin{equation*}
\mathbf{Q}_F=\{\mathbf{q}\in L^2(T^F_1\cup T^F_2)^N:~ \mathbf{q}|_{T_i^F}\in \nabla V^{\mathrm{IFE}}_h(T_i^F),~ i=1,2\},
\end{equation*}
where $\nabla V^{\mathrm{IFE}}_h(T)=\{ \nabla_h v : ~\forall v\in V^{\mathrm{IFE}}_h(T) \}$. We then define the local  {\em  lifting operator}  $\mathbf{r}_F : L^2(F)\rightarrow \mathbf{Q}_F$  by
\begin{equation}\label{def_lift}
\int_{T_1^F\cup T_2^F} (\mathbb{B} \mathbf{r}_F(v))\cdot \mathbf{q}=\int_Fv\{(\mathbb{B} \mathbf{q})\cdot n_F\}_F \qquad \forall \mathbf{q}\in \mathbf{Q}_F.
\end{equation}
%

\begin{remark}
We note that the plane $L_T$, an approximation of the interface $\Gamma$ on $T$, is used solely to provide connective conditions for the piecewise IFE functions.
The IFE space and the IFE method considered in this paper, however, are defined with respect to the exact interface $\Gamma$.
We do not consider the effects of numerical integration when calculating integrals over curved interfaces and subdomains for the following three reasons:
\begin{enumerate}
  \item It is common practice to approximate $\Gamma$ with a discrete interface $\Gamma_h$, which is continuous and piecewise linear but does not necessarily coincide with $L_T$ on each element $T$. 
Following similar arguments in \cite{bramble1996finite, ji2023immersed, ji2024mini}, we can show that for linear IFEs, using a piecewise linear approximation of the interface is sufficient to maintain optimal convergence.
  \item The study of IFE methods on curved subdomains is valuable for the development of higher-order IFE methods, for which a piecewise linear approximation of the interface may not be sufficient \cite{201GUOSIAM}.
  \item There are numerous highly accurate numerical quadrature methods available for integration on curved domains and interfaces, as discussed in the literature (e.g., \cite{cui2020high}). 
\end{enumerate}
\end{remark}

\begin{remark}
We also note that  the stabilization term $s_h(\cdot,\cdot)$  can be replaced by 
\begin{equation*}
\tilde{s}_h(v,w)=\sum_{F\in\mathcal{F}_h^\Gamma}\frac{\eta_F}{h_F} \int_F[v]_F[w]_F,
\end{equation*}
where $h_F$ denotes the diameter of $F$.
In this case, the parameter $\eta_F$ should be sufficiently large to ensure coercivity. Our analysis and theoretical results also hold for this case under the assumption that $\eta_F$ is large enough. 
\end{remark}

\subsection{Explicit IFE basis and correction functions}\label{subsec_expli}
To derive explicit expressions for the IFE basis and correction functions, we begin by proving Lemma~\ref{lem_unique}. 
Let $d_{L_T}$ be the signed distance function to the plane $L_T$,  defined as
\begin{equation}\label{def_d_LT}
d_{L_T}(x)=(x-\bar{x}_{0,T})\cdot \bar{n}_T.
\end{equation} 
In the following lemma, we prove a novel identity related to the function $\chi_{T_h^+}d_{L_T}$.
\begin{lemma}
For each interface element $T$, the following identity holds:
\begin{equation}\label{identi_T}
\nabla \Pi_T (\chi_{T_h^+}d_{L_T})=|T_h^+||T|^{-1}\bar{n}_T.
\end{equation}
\end{lemma}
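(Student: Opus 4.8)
The plan is to exploit a general formula for the gradient of a Crouzeix--Raviart interpolant on a simplex: for every $v\in W(T)$,
$$\nabla(\Pi_T v)=\frac{1}{|T|}\sum_{F\in\mathcal{F}_T}\nu_F\int_F v,$$
where $\nu_F$ denotes the outward unit normal of $T$ on the face $F$. To establish this, I would first observe that $\Pi_T v\in\mathbb{P}_1(T)$ has a constant gradient, so $|T|\,\nabla(\Pi_T v)=\int_T\nabla(\Pi_T v)$. Applying the gradient form of the divergence theorem to the affine function $\Pi_T v$ converts this volume integral into $\sum_{F\in\mathcal{F}_T}\nu_F\int_F\Pi_T v$, and the defining property $\mathcal{N}_F(\Pi_T v)=\mathcal{N}_F(v)$ replaces each $\int_F\Pi_T v$ by $\int_F v$. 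Since $\chi_{T_h^+}d_{L_T}$ is bounded on $T$, it lies in $W(T)$ and the functionals $\mathcal{N}_F$ are well defined, so the formula applies to $v=\chi_{T_h^+}d_{L_T}$ and reduces the claim to computing $\sum_{F\in\mathcal{F}_T}\nu_F\int_F\chi_{T_h^+}d_{L_T}$.

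Next I would identify the boundary integrand. Because $d_{L_T}$ is affine with $\nabla d_{L_T}=\bar{n}_T$ and $T_h^+=\{x\in T:d_{L_T}(x)>0\}$ is the positive side of $L_T$, on each face $F$ the integrand $\chi_{T_h^+}d_{L_T}$ agrees with $d_{L_T}$ on $F\cap T_h^+$ and vanishes on the rest of $F$ (the set $F\cap L_T$ being negligible and carrying $d_{L_T}=0$ anyway). Hence $\int_F\chi_{T_h^+}d_{L_T}=\int_{F\cap\overline{T_h^+}}d_{L_T}$ for each $F\in\mathcal{F}_T$.

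The crux is then a second application of the gradient theorem, now to the globally affine function $d_{L_T}$ over the polytope $T_h^+$:
$$|T_h^+|\,\bar{n}_T=\int_{T_h^+}\nabla d_{L_T}=\int_{\partial T_h^+}d_{L_T}\,\nu,$$
with $\nu$ the outward unit normal of $T_h^+$. The boundary $\partial T_h^+$ decomposes into the portions $F\cap\overline{T_h^+}$ of the faces of $T$ (on which $\nu=\nu_F$) and the planar cut $L_T\cap T$ (on which $\nu=-\bar{n}_T$). The key observation is that $d_{L_T}\equiv 0$ on $L_T\cap T$ by the very definition of the signed distance function in \eqref{def_d_LT}, so the cut contributes nothing and
$$\sum_{F\in\mathcal{F}_T}\nu_F\int_{F\cap\overline{T_h^+}}d_{L_T}=|T_h^+|\,\bar{n}_T.$$
Combining this with the interpolant formula yields $\nabla(\Pi_T(\chi_{T_h^+}d_{L_T}))=|T_h^+||T|^{-1}\bar{n}_T$, as claimed. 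I expect the only delicate point to be the bookkeeping of orientation conventions --- confirming that $T_h^+$ is the side on which $d_{L_T}>0$ and that the outward normal of $T_h^+$ on the cut is $-\bar{n}_T$ --- which is precisely what makes the cut term drop out once the vanishing of $d_{L_T}$ on $L_T\cap T$ is in hand.
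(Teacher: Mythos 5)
Your proposal is correct and follows essentially the same route as the paper's proof: both reduce the claim to matching boundary integrals via the defining face-average property of $\Pi_T$, apply the divergence (gradient) theorem on $T$ and on $T_h^+$, and use the vanishing of $d_{L_T}$ on $L_T$ to discard the planar cut. The paper merely compresses your two applications of the gradient theorem into one identity applied to both sides simultaneously, so no substantive difference remains.
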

\begin{proof}
Using the definition of $\Pi_T$ and the fact that $d_{L_T}=0$ on $L_T$, we have
\begin{equation*}
\int_{\partial T} \Pi_T(\chi_{T_h^+}d_{L_T})\nu=\int_{\partial T} \chi_{T_h^+}d_{L_T}\nu=\int_{\partial T_h^+} d_{L_T}\nu,
\end{equation*}
where $\nu$ denotes the unit outer normal along $\partial T$ and ${\partial T_h^+}$.
Applying the divergence theorem to both sides of the above identity, we obtain
\begin{equation*}
\int_T \nabla \Pi_T (\chi_{T_h^+}d_{L_T})=\int_{T_h^+} \nabla  d_{L_T}.
\end{equation*} 
Since $\nabla \Pi_T (\chi_{T_h^+}d_{L_T})$ is constant and $\nabla d_{L_T}= \bar{n}_T$, the proof is concluded.
\end{proof}


\subsubsection{Proof of Lemma~\ref{lem_unique}}\label{subsec_proof}
\begin{proof}
Since the dimension of $\mathbb{P}_1(\mathbb{R}^N)\times\mathbb{P}_1(\mathbb{R}^N)$ and the number of functionals $\mathcal{J}_{i,T}$ and $\mathcal{M}_{F,T}$ are equal, it suffices to prove that $\phi^+=\phi^-=0$ if $\mathcal{J}_{i,T}(\phi^+,\phi^-)=0$  for all $i=0,\cdots,N$ and $\mathcal{M}_{F,T}(\phi^+,\phi^-)=0$ for all $T\in\mathcal{F}_T$. From $\mathcal{J}_{i,T}(\phi^+,\phi^-)=0$ for all $i=0,\cdots,N-1$, we have $(\phi^+-\phi^-)|_{L_T}=0$.
Therefore, there exists some constant $\alpha$ such that
$\phi^+-\phi^-=\alpha d_{L_T}.$
Thus, we can write
\begin{equation}\label{pro_uni_01}
(\phi^+,\phi^-)=(\phi^-,\phi^-)+(\phi^+-\phi^-,0)=(\phi^-,\phi^-)+\alpha (d_{L_T}, 0).
\end{equation}
Next, using $\mathcal{M}_{F,T}(\phi^+,\phi^-)=0$, we obtain
$
\mathcal{M}_{F,T}(\phi^-,\phi^-)+\alpha \mathcal{M}_{F,T}(d_{L_T}, 0)=0,
$
which implies 
\begin{equation}\label{pro_uni_02}
\phi^-=-\alpha \Pi_T (\chi_{T_h^+}d_{L_T}).
\end{equation}
Now, using $\mathcal{J}_{N,T}(\phi^+,\phi^-)=0$ and the above identities, we can deduce
\begin{equation*}
\begin{aligned}
0=\mathcal{J}_{N,T}(\phi^-,\phi^-)+\alpha \mathcal{J}_{N,T}(d_{L_T}, 0)=\alpha\mathcal{J}_{N,T}(-\Pi_T (\chi_{T_h^+}d_{L_T})+d_{L_T},-\Pi_T (\chi_{T_h^+}d_{L_T})).
\end{aligned}
\end{equation*}
Using (\ref{def_J}), (\ref{identi_T}) and the fact that $\nabla d_{L_T}=\bar{n}_T$, we arrive at
\begin{equation}\label{eq_cT_uni}
\left( |T_h^+||T|^{-1}\bar{n}_T^\top(\mathbb{B}_T^--\mathbb{B}_T^+)\bar{n}_T+ \bar{n}_T^\top\mathbb{B}_T^+\bar{n}_T\right)\alpha=0.
\end{equation}
From (\ref{def_betaM}), we know that $\bar{n}_h^\top\mathbb{B}_T^+\bar{n}_h\geq \beta^+_m>0$, so we can divide by it to obtain
\begin{equation}\label{def_c_T}
 c_T\alpha=0 \mbox{ with } c_T:= |T_h^+||T|^{-1}\left(\frac{\bar{n}_T^\top\mathbb{B}_T^-\bar{n}_T}{\bar{n}_T^\top\mathbb{B}_T^+\bar{n}_T}-1\right)+ 1.
\end{equation}
Using (\ref{def_betaM}) and the fact that $ |T_h^+||T|^{-1}\in [0,1]$, we have the estimate 
\begin{equation}\label{esti_cT}
c_T\geq \mathrm{min}(1, \bar{n}_T^\top\mathbb{B}_T^-\bar{n}_T/(\bar{n}_T^\top\mathbb{B}_T^+\bar{n}_T))\geq \mathrm{min}(1, \beta_m^-/\beta_M^+)>0.
\end{equation}
Therefore, we must have $\alpha=0$. Combining this with (\ref{pro_uni_01}) and (\ref{pro_uni_02}), we conclude that $\phi^+=\phi^-=0$. This completes the proof of the lemma.
\end{proof}

\subsubsection{Explicit IFE basis functions}
By Lemma~\ref{lem_unique}, for each $F\in\mathcal{F}_T$, there exist functions $(\phi_{F,T}^+,\phi_{F,T}^-)\in\mathbb{P}_1(\mathbb{R}^N)\times\mathbb{P}_1(\mathbb{R}^N)$  such that 
\begin{equation}\label{def_psi_phi2}
\mathcal{J}_{j,T}(\phi_{F,T}^+,\phi_{F,T}^-)=0~~\forall j \quad\mbox{ and }\quad \mathcal{M}_{F^\prime,T}(\phi_{F,T}^+,\phi_{F,T}^-)=\delta_{FF^\prime}~~\forall F^\prime\in\mathcal{F}_T,
\end{equation}
where $\delta_{FF^\prime}$ is the Kronecker  delta, i.e., $\delta_{FF^\prime}=1$ if $F^\prime=F$, and $\delta_{FF^\prime}=0$ otherwise.

Let $\phi_{F,T}$ be the basis function of $V_h^{\mathrm{IFE}}(T)$ associated with $\mathcal{N}_{F,T}^{\mathrm{IFE}}$. Clearly, we have $\phi_{F,T}|_{T^\pm}=\phi_{F,T}^\pm$, where $\phi_{F,T}^\pm$ is defined in (\ref{def_psi_phi2}). The following lemma provides an explicit expression of $\phi_{F,T}^\pm$.

\begin{lemma}\label{lem_esti_basis}
Let $\lambda_{F,T}$ be the standard Crouzeix--Raviart basis function  on the element $T$ associated with the face $F\in\mathcal{F}_T$. Then, we have
\begin{equation}\label{expli_basis}
(\phi_{F,T}^+,\phi_{F,T}^-)=(\lambda_{F,T},\lambda_{F,T})+\alpha_T (  \phi_{T,J}^{+} ,  \phi_{T,J}^{-})
\end{equation}
with
\begin{subequations}\label{def_phi_alpha}
\begin{align}
& \phi_{T,J}^{+}:=d_{L_T}-\Pi_T (\chi_{T_h^+}d_{L_T}),\quad  \phi_{T,J}^{-}:=-\Pi_T (\chi_{T_h^+}d_{L_T}),\label{def_omega_pm}\\
&\alpha_T:=-[\![\mathbb{B}_T^\pm\nabla \lambda_{F,T}\cdot \bar{n}_T]\!]\left((\bar{n}_T^\top\mathbb{B}_T^+\bar{n}_T)c_T\right)^{-1},\label{alpha_T}
\end{align}
\end{subequations}
where the constant $c_T$ is defined in (\ref{def_c_T}).
Moreover, the basis functions satisfy the estimate
\begin{equation}\label{esti_basis}
|\phi_{F,T}^\pm|_{W^m_\infty(T)}\lesssim h_T^{-m},\quad m=0,1. 
\end{equation}
\end{lemma}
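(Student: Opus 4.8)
The plan is to verify directly that the right-hand side of \eqref{expli_basis} satisfies the defining conditions \eqref{def_psi_phi2} and then to invoke the uniqueness established in Lemma~\ref{lem_unique}. Write $\psi^\pm=\lambda_{F,T}+\alpha_T\phi_{T,J}^\pm$ for the candidate pair. The first observation is that $\phi_{T,J}^+-\phi_{T,J}^-=d_{L_T}$, so $\psi^+-\psi^-=\alpha_T d_{L_T}$. Since the points $\bar{x}_{j,T}$, $j=0,\dots,N-1$, all lie on the plane $L_T=\{d_{L_T}=0\}$ by \eqref{def_xit_bar}, the jump functionals satisfy $\mathcal{J}_{j,T}(\psi^+,\psi^-)=\alpha_T d_{L_T}(\bar{x}_{j,T})=0$ for $j=0,\dots,N-1$, regardless of $\alpha_T$.

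Next I would check the moment conditions. Using the linearity of $\mathcal{M}_{F',T}$ on the product space and the identity $\chi_{T_h^+}+\chi_{T_h^-}=1$ on $T$, the contribution of the $(\lambda_{F,T},\lambda_{F,T})$ term collapses to the standard Crouzeix--Raviart degree of freedom $|F'|^{-1}\int_{F'}\lambda_{F,T}=\delta_{FF'}$. For the correction term, the same splitting of the characteristic functions together with the defining property \eqref{def_st_pi} of $\Pi_T$ yields $\int_{F'}\chi_{T_h^+}d_{L_T}=\int_{F'}\Pi_T(\chi_{T_h^+}d_{L_T})$, so that $\mathcal{M}_{F',T}(\phi_{T,J}^+,\phi_{T,J}^-)=0$ for every $F'\in\mathcal{F}_T$. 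Hence $\mathcal{M}_{F',T}(\psi^+,\psi^-)=\delta_{FF'}$, again independently of $\alpha_T$.

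The value of $\alpha_T$ is pinned down by the single remaining condition $\mathcal{J}_{N,T}(\psi^+,\psi^-)=0$, and this computation is the main obstacle. The key ingredient is the identity \eqref{identi_T}, which gives $\nabla\phi_{T,J}^+=(1-|T_h^+||T|^{-1})\bar{n}_T$ and $\nabla\phi_{T,J}^-=-|T_h^+||T|^{-1}\bar{n}_T$. Substituting these into the conormal jump and collecting the coefficient of $\alpha_T$ must reproduce \emph{exactly} the factor $(\bar{n}_T^\top\mathbb{B}_T^+\bar{n}_T)\,c_T$ with $c_T$ as in \eqref{def_c_T}; solving the resulting scalar equation then gives the formula \eqref{alpha_T}. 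The admissibility of this division, and with it the robustness of the construction under arbitrary cuts, rests entirely on the lower bound \eqref{esti_cT}, namely $c_T\ge\min(1,\beta_m^-/\beta_M^+)>0$, which ensures $\alpha_T$ is well defined uniformly in how $\Gamma$ meets $T$. With all four families of functionals matched, Lemma~\ref{lem_unique} identifies $(\psi^+,\psi^-)$ with $(\phi_{F,T}^+,\phi_{F,T}^-)$, establishing \eqref{expli_basis}.

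Finally, for the bound \eqref{esti_basis} I would estimate each term in the explicit formula. Shape regularity gives $|\lambda_{F,T}|_{W^m_\infty(T)}\lesssim h_T^{-m}$, and since $\nabla d_{L_T}=\bar{n}_T$ with $|d_{L_T}|\lesssim h_T$ on $T$, also $|d_{L_T}|_{W^m_\infty(T)}\lesssim h_T^{1-m}$. The interpolant $\Pi_T(\chi_{T_h^+}d_{L_T})$ is a linear polynomial whose gradient equals $|T_h^+||T|^{-1}\bar{n}_T$ by \eqref{identi_T} and whose $L^\infty$-norm is controlled by its face moments, which are bounded by $\|d_{L_T}\|_{L^\infty(T)}\lesssim h_T$; hence $|\Pi_T(\chi_{T_h^+}d_{L_T})|_{W^m_\infty(T)}\lesssim h_T^{1-m}$, and the same holds for $\phi_{T,J}^\pm$. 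The ellipticity bounds \eqref{def_betaM} together with \eqref{esti_cT} yield $|\alpha_T|\lesssim h_T^{-1}$ from \eqref{alpha_T}, since the numerator involves $\nabla\lambda_{F,T}=O(h_T^{-1})$ while the denominator is bounded below by a positive constant. Combining these gives $|\phi_{F,T}^\pm|_{W^m_\infty(T)}\lesssim h_T^{-m}+|\alpha_T|\,h_T^{1-m}\lesssim h_T^{-m}$, as claimed.
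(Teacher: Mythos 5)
Your proposal is correct and is essentially the paper's own argument run in reverse: it uses the same decomposition $(\lambda_{F,T},\lambda_{F,T})+\alpha(\phi_{T,J}^{+},\phi_{T,J}^{-})$, the same key identity (\ref{identi_T}) to reduce $\mathcal{J}_{N,T}=0$ to the scalar equation $(\bar{n}_T^\top\mathbb{B}_T^+\bar{n}_T)\,c_T\,\alpha=-[\![\mathbb{B}_T^\pm\nabla \lambda_{F,T}\cdot \bar{n}_T]\!]$, and the same ingredients (positivity of $c_T$ from (\ref{esti_cT}), $\|d_{L_T}\|_{L^\infty(T)}\lesssim h_T$, and $|\lambda_{F,T}|_{W^m_\infty(T)}\lesssim h_T^{-m}$) for the bound (\ref{esti_basis}). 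The only difference is direction — you verify the ansatz against the conditions (\ref{def_psi_phi2}) and then invoke the uniqueness of Lemma~\ref{lem_unique}, whereas the paper derives the formula forward from those conditions — which is an immaterial reversal.
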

\begin{proof}
Using $\mathcal{J}_{i,T}(\phi^+_{F,T},\phi^-_{F,T})=0$ for all $i=0,\cdots,N-1$ from (\ref{def_psi_phi2}), we have $(\phi^+_{F,T}-\phi^-_{F,T})|_{L_T}=0$. Similarly to (\ref{pro_uni_01}), there  exists a constant $\alpha$ such that 
\begin{equation*}
(\phi^+_{F,T},\phi^-_{F,T})=(\phi^-_{F,T},\phi^-_{F,T})+\alpha (d_{L_T}, 0).
\end{equation*}
Using the fact that  $\mathcal{M}_{F^\prime,T}(\phi_{F,T}^+,\phi_{F,T}^-)=\delta_{FF^\prime}$ from (\ref{def_psi_phi2}), we get 
\begin{equation*}
\mathcal{M}_{F^\prime,T}(\phi^-_{F,T},\phi^-_{F,T})+\alpha \mathcal{M}_{F^\prime,T}(d_{L_T}, 0)=\delta_{FF^\prime} ~~\forall F^\prime\in \mathcal{F}_T,
\end{equation*}
which implies 
$
\phi^-_{F,T}=\lambda_{F,T}-\alpha \Pi_T (\chi_{T_h^+}d_{L_T}).
$
Therefore, we have
\begin{equation}\label{pro_exp_basis_0}
(\phi^+_{F,T},\phi^-_{F,T})=\alpha (~\overbrace{d_{L_T}-\Pi_T (\chi_{T_h^+}d_{L_T})}^{:= \phi_{T,J}^{+}},~ \overbrace{-\Pi_T (\chi_{T_h^+}d_{L_T})}^{:=\ \phi_{T,J}^{-}}~)+(\lambda_{F,T},\lambda_{F,T}).
\end{equation}
Similarly to (\ref{eq_cT_uni}), we use $\mathcal{J}_{N,T}(\phi^+_{F,T},\phi^-_{F,T})=0$, (\ref{def_J}), (\ref{identi_T}) and $\nabla d_{L_T}=\bar{n}_T$ to obtain an equation for $\alpha$,
\begin{equation*}
\begin{aligned}
\left(\bar{n}_T^\top\mathbb{B}_T^+\bar{n}_T\right)\underbrace{\left( |T_h^+||T|^{-1}\left(\frac{\bar{n}_T^\top\mathbb{B}_T^-\bar{n}_T}{\bar{n}_T^\top\mathbb{B}_T^+\bar{n}_T}-1\right)+ 1\right)}_{=c_T}\alpha=-[\![\mathbb{B}_T^\pm\nabla\lambda_{F,T}\cdot \bar{n}_T]\!],
\end{aligned}
\end{equation*}
where we note that $c_T$ is defined in (\ref{def_c_T}).
Substituting the solution, denoted by $\alpha_T$, into (\ref{pro_exp_basis_0}) yields the desired result (\ref{expli_basis}).  
Thanks to this explicit expression, the estimate (\ref{esti_basis}) can be derived easily by (\ref{esti_cT}), the definition $d_{L_T}(x)=(x-\bar{x}_{0,T})\cdot \bar{n}_T$ and the well-known estimate $|\lambda_{F,T}|_{W^m_\infty(T)}\lesssim h_T^{-m}$ for standard Crouzeix--Raviart basis functions.
\end{proof}

\begin{remark}\label{remark_IFEbasis}
According to (\ref{def_omega_pm}), we define $ \phi_{T,J}=\chi_{T^+}d_{L_T}-\Pi_T (\chi_{T_h^+}d_{L_T})$. Therefore, $\phi_{F,T}$ can be written as $\phi_{F,T}=\lambda_{F,T}+\alpha_T  \phi_{T,J}.$
\end{remark}

\subsubsection{Explicit correction functions}\label{sec_exp_correc}
By Lemma~\ref{lem_unique}, for all $i=0,\cdots, N$, there exist $(\psi_{i,T}^+,\psi_{i,T}^-)\in\mathbb{P}_1(\mathbb{R}^N)\times\mathbb{P}_1(\mathbb{R}^N)$ such that 
\begin{equation}\label{def_psi_phi1}
\mathcal{J}_{j,T}(\psi_{i,T}^+,\psi_{i,T}^-)=\delta_{ij}~~\forall j=0,\cdots, N,\quad \mathcal{M}_{F,T}(\psi_{i,T}^+,\psi_{i,T}^-)=0~~\forall F\in\mathcal{F}_T,
\end{equation}
where $\delta_{ij}$ is the Kronecker delta.
The  function in (\ref{def_corr}) can be written as 
\begin{equation}\label{explicit_correc}
\xi^{J,\pm}_T= \sum_{i=0}^{N-1}g_D(\tilde{x}_{i,T})\psi_{i,T}^\pm+\mathrm{avg}_{\Gamma_{T}^{ext}}(g_N)\psi_{N,T}^\pm.
\end{equation}
Recalling the definition of $u_h^J$ in (\ref{def_corr1}), it suffices to derive explicit expressions of  $\psi_{i,T}^\pm$.

Let $\omega_{i,T}^-=0$. We first construct $(\omega_{i,T}^+,\omega_{i,T}^-)\in\mathbb{P}_1(\mathbb{R}^N)\times\mathbb{P}_1(\mathbb{R}^N)$, which satisfies only the first condition of (\ref{def_psi_phi1}), i.e., $\mathcal{J}_{j,T}(\omega_{i,T}^+,\omega_{i,T}^-)=\delta_{ij}$ for all $j=0,\cdots, N$. By the definition of $\phi_{F,T}^\pm$ in (\ref{def_psi_phi2}), it can be verified that $\psi_{i,T}^\pm$  in (\ref{def_psi_phi1}) can be expressed as:
\begin{equation}\label{cons_psi}
\psi_{i,T}^\pm=\omega_{i,T}^\pm-\sum_{F\in\mathcal{F}_T}\mathcal{M}_{F,T}(\omega_{i,T}^+,\omega_{i,T}^-) \phi_{F,T}^\pm,\quad i=0,\cdots, N.
\end{equation}
From (\ref{def_J}) and (\ref{def_xit_bar}), we obtain the following for $i=0$:
\begin{equation}\label{ome_1_identi}
\begin{aligned}
&\omega_{0,T}^+(\bar{x}_{0,T})=1, ~~\nabla \omega_{0,T}^+ \cdot \bar{t}_{j,T}=-(\mu h_T)^{-1},\quad j=1,\cdots,N-1, \\
&\nabla \omega_{0,T}^+\cdot\bar{n}_T =-(\bar{n}_T^\top\mathbb{B}_T^+ \bar{n}_T)^{-1}\sum_{j=1}^{N-1}\bar{n}_T^\top\mathbb{B}_T^+\bar{t}_{j,T}  (\nabla \omega_{0,T}^+\cdot \bar{t}_{j,T}),
\end{aligned}
\end{equation}
where the relation $\nabla v= \bar{n}_T  (\nabla v \cdot\bar{n}_T)+\sum_{j=1}^{N-1} \bar{t}_{j,T}  (\nabla v \cdot \bar{t}_{j,T})$ for any function $v$ is used. This result allows us to explicitly express $\omega_{0,T}^+$. Similarly, for $i=1,\cdots, N-1$,  
\begin{equation}\label{ome_i2_identi}
\begin{aligned}
&\omega_{i,T}^+(\bar{x}_{0,T})=0, ~~\nabla \omega_{i,T}^+ \cdot \bar{t}_{j,T}=\left\{
\begin{aligned}
(\mu h_T)^{-1},\quad j=i,\\
0,\quad j\not=i, 
\end{aligned}\right.~~j=1,\cdots,N-1,
\\
&\nabla \omega_{i,T}^+\cdot\bar{n}_T =-(\bar{n}_T^\top\mathbb{B}_T^+ \bar{n}_T)^{-1}\sum_{j=1}^{N-1}\bar{n}_T^\top\mathbb{B}_T^+\bar{t}_{j,T}  (\nabla \omega_{i,T}^+\cdot \bar{t}_{j,T}),
\end{aligned}
\end{equation}
and for $i=N$,
\begin{equation}\label{ome_N_identi}
\omega_{N,T}^+(\bar{x}_{0,T})=\nabla \omega_{N,T}^+ \cdot \bar{t}_{j,T}=0,~j=1,\cdots,N-1, \nabla \omega_{N,T}^+\cdot\bar{n}_T =(\bar{n}_T^\top\mathbb{B}_T^+ \bar{n}_T)^{-1}.
\end{equation}

In addition to the explicit expression (\ref{explicit_correc}), we also present the following estimate, which will be used in the proof of Lemma~\ref{lem_xi_app}.

\begin{lemma}
For $m=0,1$, we have
\begin{equation}\label{esti_aux}
|\psi_{i,T}^\pm|_{W^m_\infty(T)}\lesssim\left\{
\begin{aligned}
&h_T^{-m}, ~\quad i=0,\cdots, N-1,\\
&h_T^{1-m}, \quad i=N.\\
\end{aligned}\right. 
\end{equation}
\end{lemma}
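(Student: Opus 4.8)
The plan is to work directly from the explicit representation \eqn{cons_psi},
\begin{equation*}
\psi_{i,T}^\pm=\omega_{i,T}^\pm-\sum_{F\in\mathcal{F}_T}\mathcal{M}_{F,T}(\omega_{i,T}^+,\omega_{i,T}^-)\,\phi_{F,T}^\pm,
\end{equation*}
with $\omega_{i,T}^-=0$, and to estimate the three ingredients separately: the auxiliary functions $\omega_{i,T}^+$, the scalar coefficients $\mathcal{M}_{F,T}(\omega_{i,T}^+,0)$, and the IFE basis functions $\phi_{F,T}^\pm$. Since the last of these is already controlled by \eqn{esti_basis}, namely $|\phi_{F,T}^\pm|_{W^m_\infty(T)}\lesssim h_T^{-m}$, and the number of faces $|\mathcal{F}_T|=N+1$ is fixed, the whole estimate \eqn{esti_aux} reduces to bounding $\|\omega_{i,T}^+\|_{L^\infty(T)}$ and $|\omega_{i,T}^+|_{W^1_\infty(T)}$.

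First I would estimate the gradient of $\omega_{i,T}^+$ using the orthonormal frame $\{\bar{n}_T,\bar{t}_{1,T},\dots,\bar{t}_{N-1,T}\}$ and the decomposition $\nabla v=\bar{n}_T(\nabla v\cdot\bar{n}_T)+\sum_{j}\bar{t}_{j,T}(\nabla v\cdot\bar{t}_{j,T})$ already used in \eqn{ome_1_identi}. For $i=0,\dots,N-1$ the prescribed tangential derivatives in \eqn{ome_1_identi}--\eqn{ome_i2_identi} are $O(h_T^{-1})$, while the normal derivative is a fixed linear combination of them with coefficients $(\bar{n}_T^\top\mathbb{B}_T^+\bar{n}_T)^{-1}\bar{n}_T^\top\mathbb{B}_T^+\bar{t}_{j,T}$; the spectral bounds \eqn{def_betaM} give $\bar{n}_T^\top\mathbb{B}_T^+\bar{n}_T\ge\beta_m^+$ and $|\bar{n}_T^\top\mathbb{B}_T^+\bar{t}_{j,T}|\le\beta_M^+$, so this combination is again $O(h_T^{-1})$ and hence $|\omega_{i,T}^+|_{W^1_\infty(T)}\lesssim h_T^{-1}$. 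For $i=N$, \eqn{ome_N_identi} makes all tangential derivatives vanish and the normal derivative equal to $(\bar{n}_T^\top\mathbb{B}_T^+\bar{n}_T)^{-1}\le(\beta_m^+)^{-1}=O(1)$, so $|\omega_{N,T}^+|_{W^1_\infty(T)}\lesssim 1=h_T^{1-1}$. For the sup-norm I would use that $\omega_{i,T}^+$ is affine with prescribed value at $\bar{x}_{0,T}$ (equal to $1$, $0$, $0$ in the cases $i=0$, $i=1,\dots,N-1$, $i=N$) and that both $\bar{x}_{0,T}$ and $T$ lie in $\mathscr{B}_T$ with $\mathrm{diam}(\mathscr{B}_T)\lesssim h_T$; combining the value at $\bar{x}_{0,T}$ with the gradient bound yields $\|\omega_{i,T}^+\|_{L^\infty(T)}\lesssim 1$ for $i<N$ and $\lesssim h_T$ for $i=N$, matching the right-hand side of \eqn{esti_aux} for $m=0$.

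Next I would bound the coefficients. Because $\omega_{i,T}^-=0$ and $0\le\chi_{T_h^+}\le 1$, the definition \eqn{def_MFT} gives $|\mathcal{M}_{F,T}(\omega_{i,T}^+,0)|\le|F|^{-1}\int_F|\omega_{i,T}^+|\le\|\omega_{i,T}^+\|_{L^\infty(T)}$, so the coefficients inherit exactly the bounds just obtained: $\lesssim 1$ for $i<N$ and $\lesssim h_T$ for $i=N$. Feeding these together with \eqn{esti_basis} into the sum in \eqn{cons_psi} and using $|\mathcal{F}_T|=N+1$, the correction term satisfies $\bigl|\sum_{F}\mathcal{M}_{F,T}(\omega_{i,T}^+,0)\phi_{F,T}^\pm\bigr|_{W^m_\infty(T)}\lesssim\|\omega_{i,T}^+\|_{L^\infty(T)}\,h_T^{-m}$, which is $\lesssim h_T^{-m}$ for $i<N$ and $\lesssim h_T^{1-m}$ for $i=N$. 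A triangle inequality in \eqn{cons_psi} then gives the bound for $\psi_{i,T}^+$; for $\psi_{i,T}^-$ the leading term $\omega_{i,T}^-=0$, so $\psi_{i,T}^-$ equals the correction term alone and the same bound applies, completing \eqn{esti_aux}.

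The computations above are elementary, so I expect the only genuinely delicate point to be the control of the normal derivative of $\omega_{i,T}^+$: one must ensure that the factor $(\bar{n}_T^\top\mathbb{B}_T^+\bar{n}_T)^{-1}$ does not blow up and that the anisotropic cross term $\bar{n}_T^\top\mathbb{B}_T^+\bar{t}_{j,T}$ stays bounded, which is precisely where the uniform spectral bounds \eqn{def_betaM} on $\mathbb{B}_T^+$ are used. Verifying that $\bar{x}_{0,T}$ lies within $O(h_T)$ of $T$, so that the affine sup-norm estimate is legitimate, is the only other place requiring care, and it follows immediately from $T,\,S_T,\,\Gamma_T^{ext}\subset\mathscr{B}_T$ together with $\mathrm{diam}(\mathscr{B}_T)\lesssim h_T$.
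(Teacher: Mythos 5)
Your proposal is correct and follows essentially the same route as the paper: it starts from the representation \eqref{cons_psi}, bounds $|\omega_{i,T}^+|_{W^m_\infty}$ by $h_T^{-m}$ (for $i<N$) and $h_T^{1-m}$ (for $i=N$) using the explicit identities \eqref{ome_1_identi}--\eqref{ome_N_identi}, and then combines this with $\omega_{i,T}^-=0$ and the basis estimate \eqref{esti_basis}. The paper states these bounds without detail, whereas you correctly fill in the implicit steps — the spectral bounds \eqref{def_betaM} controlling the normal derivative, the affine sup-norm argument via the value at $\bar{x}_{0,T}$, and the coefficient bound $|\mathcal{M}_{F,T}(\omega_{i,T}^+,0)|\le\|\omega_{i,T}^+\|_{L^\infty(T)}$ — all of which are sound.
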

\begin{proof}
It follows from (\ref{ome_1_identi})--(\ref{ome_N_identi}) that, for $m=0,1$,
\begin{equation*}
|\omega_{i,T}^+|_{W^m_\infty}\lesssim\left\{
\begin{aligned}
&h_T^{-m}, ~\quad i=0,\cdots, N-1,\\
&h_T^{1-m}, \quad i=N.
\end{aligned}\right. 
\end{equation*}
Combining this with  $\omega_{i,T}^-=0$,  (\ref{cons_psi}), and (\ref{esti_basis}) yields the desired result.
\end{proof}

\section{Error estimates for the IFE method}\label{sec_anal}
\subsection{Coercivity and continuity}
We define the semi-norm $\|\cdot\|_h$ by $\|v\|_h^2=a_h(v,v)$. Clearly, $\|\cdot\|_h$ is indeed a norm on the IFE space $V_{h,0}^{\rm IFE}$ because $\|v\|_h=0$ implies $v$ is piecewise constant, and the zero boundary condition together with conditions on interfaces and faces implies $v\equiv 0$. 

The following lemma establishes that  $A_h(\cdot,\cdot)$  is coercive on the IFE space $V_{h,0}^{\rm IFE}$ with respect to the norm $\|\cdot\|_h$. The proof is similar to that in \cite[Lemma 5.1]{ji2023immersed}, and thus is omitted.
\begin{lemma}\label{lem_Coercivity}
We have
\begin{equation}\label{Coercivity}
A_h(v_h,v_h)\geq \frac{1}{2}\|v_h\|_{h}^2\qquad \forall v_h\in V_{h}^{\rm IFE}.
\end{equation}
\end{lemma}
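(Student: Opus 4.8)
The plan is to read (\ref{Coercivity}) as a standard lifting-based discontinuous Galerkin coercivity estimate, in which the consistency term $b_h$ is absorbed into a fraction of the energy $a_h$ together with the full stabilization $s_h$. Since $A_h(v_h,v_h)=a_h(v_h,v_h)+b_h(v_h,v_h)+s_h(v_h,v_h)$ and $\|v_h\|_h^2=a_h(v_h,v_h)$, it suffices to establish
\begin{equation*}
\tfrac12 a_h(v_h,v_h)+b_h(v_h,v_h)+s_h(v_h,v_h)\ge 0,
\end{equation*}
that is, to control $b_h(v_h,v_h)$ by $\tfrac12 a_h(v_h,v_h)+s_h(v_h,v_h)$.

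The first step is to rewrite $b_h(v_h,v_h)$ through the lifting operator. Because $v_h\in V_h^{\mathrm{IFE}}$, its piecewise gradient on each neighbour of an interface face $F$ satisfies $\nabla_h v_h|_{T_i^F}\in \nabla V_h^{\mathrm{IFE}}(T_i^F)$, hence $\nabla_h v_h\in\mathbf{Q}_F$ and is an admissible test function in the defining relation (\ref{def_lift}). Taking $\mathbf{q}=\nabla_h v_h$ and $v=[v_h]_F$ there gives, for each $F\in\mathcal{F}_h^\Gamma$,
\begin{equation*}
\int_F [v_h]_F\{(\mathbb{B}\nabla_h v_h)\cdot n_F\}_F=\int_{T_1^F\cup T_2^F}(\mathbb{B}\mathbf{r}_F([v_h]_F))\cdot \nabla_h v_h,
\end{equation*}
so that $b_h(v_h,v_h)=-2\sum_{F\in\mathcal{F}_h^\Gamma}\int_{T_1^F\cup T_2^F}(\mathbb{B}\mathbf{r}_F([v_h]_F))\cdot \nabla_h v_h$.

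The second step is Cauchy--Schwarz and Young in the $\mathbb{B}$-weighted inner product $\langle p,q\rangle_{D}=\int_D(\mathbb{B}p)\cdot q$, which is genuinely an inner product since $\mathbb{B}$ is symmetric positive definite by (\ref{def_betaM}). On each face, for a parameter $\delta>0$,
\begin{equation*}
2\left|\langle \mathbf{r}_F([v_h]_F),\nabla_h v_h\rangle_{T_1^F\cup T_2^F}\right|\le \delta\,\langle \nabla_h v_h,\nabla_h v_h\rangle_{T_1^F\cup T_2^F}+\delta^{-1}\langle \mathbf{r}_F([v_h]_F),\mathbf{r}_F([v_h]_F)\rangle_{T_1^F\cup T_2^F}.
\end{equation*}
Summing over $F$ and using the finite-overlap count --- each element is a neighbour of at most $N+1$ interface faces, so $\sum_{F\in\mathcal{F}_h^\Gamma}\langle\nabla_h v_h,\nabla_h v_h\rangle_{T_1^F\cup T_2^F}\le (N+1)\,a_h(v_h,v_h)$ --- together with the identity $\sum_F\langle\mathbf{r}_F([v_h]_F),\mathbf{r}_F([v_h]_F)\rangle_{T_1^F\cup T_2^F}=\tfrac18 s_h(v_h,v_h)$ read off from (\ref{def_AAH_sh}), yields $|b_h(v_h,v_h)|\le \delta(N+1)a_h(v_h,v_h)+\tfrac{1}{8\delta}s_h(v_h,v_h)$.

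The final step is to tune $\delta$. Choosing $\delta=1/8$ makes $\tfrac1{8\delta}=1$, absorbing $s_h$ exactly, while the energy coefficient becomes $(N+1)/8\le \tfrac12$ because $N+1\le 4$ for $N\in\{2,3\}$; this is precisely why the constant $8$ is placed in front of $s_h$ in (\ref{def_AAH_sh}). Combining the estimates gives $\tfrac12 a_h(v_h,v_h)+b_h(v_h,v_h)+s_h(v_h,v_h)\ge \bigl(\tfrac12-\tfrac{N+1}{8}\bigr)a_h(v_h,v_h)\ge 0$, which is (\ref{Coercivity}). I expect the only delicate points to be confirming that $\nabla_h v_h$ is admissible in (\ref{def_lift}) and tracking the overlap constant so that the factor $8$ indeed suffices up to tetrahedra; both follow directly from the definitions of $\mathbf{Q}_F$, $\nabla V_h^{\mathrm{IFE}}(T)$, and the fact that a simplex has $N+1$ faces.
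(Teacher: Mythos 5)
Your proof is correct and is essentially the argument the paper intends: the paper omits the proof, citing Lemma 5.1 of \cite{ji2023immersed}, which is exactly this standard BR2-type lifting argument (rewrite $b_h(v_h,v_h)$ via the lifting operator $\mathbf{r}_F$, apply weighted Cauchy--Schwarz and Young, and absorb the result into $a_h$ and $s_h$ using the face-count $N+1\le 4$ and the factor $8$ built into $s_h$). Your verification that $\nabla_h v_h$ is admissible in \eqref{def_lift} and your bookkeeping of the overlap constant are both sound, and they explain precisely why the coefficient $8$ yields the constant $\tfrac12$.
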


Define an augmented norm $\interleave \cdot \interleave_h$ by
\begin{equation*}
\interleave v \interleave_h^2=\|v\|_{h}^2+\sum_{F\in\mathcal{F}_h^\Gamma}\left(h_F\|\{\mathbb{B}\nabla_h v\}_F\|^2_{L^2(F)}+h_F^{-1}\| [v]_F\|^2_{L^2(F)}\right)+s_h(v,v).
\end{equation*}
Using the Cauchy-Schwarz inequality, we obtain continuity of  $A_h(\cdot,\cdot)$,
\begin{equation}\label{conti}
|A_h(v,w)|\lesssim \interleave v \interleave_h \interleave w\interleave_h \quad\forall v,w \in  H^2(\cup\Omega^\pm)+ V_{h}^{\rm IFE}+\{u_h^J\}.
\end{equation}

\subsection{Norm-equivalence for IFE functions}
In this section, we demonstrate the equivalence of   the $\|\cdot\|_{h}$-norm and the $ \interleave\cdot\interleave_h$-norm on the IFE space $V_{h}^{\rm IFE}$.
To establish this, we first make the following assumption.
\begin{assumption}\label{assum_L1L2}
For any $T\in\mathcal{T}_h^\Gamma$, let $B_T$ be the largest ball inscribed in $T$, and let $L_1$ and $L_2$ be two parallel planes at smallest distance such that  $\Gamma_T$ is bounded by $L_1$ and $L_2$.
We assume that
$\mathrm{dist}(L_1,L_2)\leq \mathrm{diam}(B_T)/2$.
\end{assumption}

Since the interface $\Gamma$ is of class $C^2$, we have $\mathrm{dist}(L_1,L_2)\leq C_\Gamma h_T^2$ for some constant  $C_\Gamma>0$ depending only on $\Gamma$. 
Additionally, since the triangulation is shape-regular, there exists a constant  $\kappa>0$ such that $\kappa h_T\leq\mathrm{diam}(B_T)$. Therefore, the above assumption holds  if $h_\Gamma\leq \kappa/(2C_\Gamma)$.

\begin{lemma}\label{lem_vh_pm_vh}
For all  $T\in\mathcal{T}_h^\Gamma$ and all $v_h\in V_h^{\mathrm{IFE}}(T)$, 
\begin{equation}\label{vh_pm_vh01}
|  v_h^\pm|_{H^m(T)}\lesssim  |  v_h |_{H^m(\cup T^\pm)},\quad m=0,1.
\end{equation}
\end{lemma}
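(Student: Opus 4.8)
The plan is to reduce the claim to two ingredients: an algebraic comparison of the two constant gradients $\nabla v_h^+$ and $\nabla v_h^-$ coming from the interface conditions, and a geometric fact guaranteeing that at least one of $T^+,T^-$ is non-degenerate. First I would extract the algebraic structure of $v_h\in V_h^{\mathrm{IFE}}(T)$. Writing $v_h|_{T^\pm}=v_h^\pm\in\mathbb{P}_1(\mathbb{R}^N)$, the conditions $\mathcal{J}_{i,T}(v_h^+,v_h^-)=0$ for $i=0,\dots,N-1$ in \eqn{def_J} force $v_h^+-v_h^-$ to vanish at the $N$ affinely independent points $\bar{x}_{0,T},\dots,\bar{x}_{N-1,T}\in L_T$ from \eqn{def_xit_bar}, hence on all of $L_T$; exactly as in \eqn{pro_uni_01} this produces a scalar $\alpha$ with $v_h^+-v_h^-=\alpha d_{L_T}$, and since $\nabla d_{L_T}=\bar{n}_T$ we get $\nabla v_h^+=\nabla v_h^-+\alpha\bar{n}_T$. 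Feeding this into the remaining condition $\mathcal{J}_{N,T}(v_h^+,v_h^-)=[\![\mathbb{B}_T^\pm\nabla v_h^\pm\cdot\bar{n}_T]\!]=0$ and solving gives $\alpha=(\bar{n}_T^\top\mathbb{B}_T^+\bar{n}_T)^{-1}\bar{n}_T^\top(\mathbb{B}_T^--\mathbb{B}_T^+)\nabla v_h^-$, together with the symmetric expression in $\nabla v_h^+$ obtained from $\nabla v_h^-=\nabla v_h^+-\alpha\bar{n}_T$; by \eqn{def_betaM} both denominators are bounded below by $\beta_m^\pm$, so $|\alpha|\lesssim|\nabla v_h^-|$ and $|\alpha|\lesssim|\nabla v_h^+|$, which yields the comparability $|\nabla v_h^+|\sim|\nabla v_h^-|$ with a constant depending only on $\beta_m^\pm,\beta_M^\pm$.

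Next I would establish the geometric ingredient, which is where Assumption~\ref{assum_L1L2} is used: at least one of $T^+,T^-$ contains a ball of radius $\gtrsim h_T$. Since the inscribed ball $B_T\subset T$, we have $\Gamma\cap B_T\subset\Gamma_T$, so $\Gamma\cap B_T$ lies in the slab between the two planes $L_1,L_2$; consequently $B_T$ minus this slab does not meet $\Gamma$ and splits into two caps contained in $\Omega^+$ and $\Omega^-$, respectively. Because $\mathrm{dist}(L_1,L_2)\le\mathrm{diam}(B_T)/2$, the heights of the two caps sum to at least $\mathrm{diam}(B_T)/2$, so one of them has height $\gtrsim\mathrm{diam}(B_T)\sim h_T$ and therefore contains a ball of radius $\gtrsim h_T$; this ball lies in the corresponding $T^s$, giving $|T^s|\gtrsim h_T^N\sim|T|$ for that sign $s$.

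With these facts the estimate follows, and by symmetry I may assume the non-degenerate side is $s=+$, proving both the $+$ and $-$ bounds simultaneously. For $m=1$, using that $v_h^\pm$ are linear so that $|v_h^\pm|_{H^1(D)}=|\nabla v_h^\pm|\,|D|^{1/2}$, I get $|v_h^+|_{H^1(T)}\lesssim|\nabla v_h^+|\,|T^+|^{1/2}=|v_h^+|_{H^1(T^+)}\le|v_h|_{H^1(\cup T^\pm)}$, while the gradient comparability gives $|v_h^-|_{H^1(T)}=|\nabla v_h^-|\,|T|^{1/2}\lesssim|\nabla v_h^+|\,|T|^{1/2}\lesssim|v_h|_{H^1(\cup T^\pm)}$. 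For $m=0$, the ball of radius $\gtrsim h_T$ inside $T^+$ and equivalence of norms on $\mathbb{P}_1$ after scaling to the reference size give $\|v_h^+\|_{L^2(T)}\lesssim\|v_h^+\|_{L^2(T^+)}\le|v_h|_{L^2(\cup T^\pm)}$; then, from $v_h^-=v_h^+-\alpha d_{L_T}$, the bounds $\|d_{L_T}\|_{L^2(T)}\lesssim h_T|T|^{1/2}$ and $|\alpha|\lesssim|\nabla v_h^+|$, and the inverse inequality $|\nabla v_h^+|\,|T|^{1/2}\lesssim h_T^{-1}\|v_h^+\|_{L^2(T)}$, I obtain $|\alpha|\,\|d_{L_T}\|_{L^2(T)}\lesssim h_T\,|\nabla v_h^+|\,|T|^{1/2}\lesssim\|v_h^+\|_{L^2(T)}$, whence $\|v_h^-\|_{L^2(T)}\lesssim|v_h|_{L^2(\cup T^\pm)}$.

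The hard part is robustness against arbitrarily small cuts: when $|T^-|\to0$ there is no way to control $v_h^-$ directly from its values on $T^-$, and the argument survives only because the flux-jump coupling $\mathcal{J}_{N,T}=0$ transfers control to the non-degenerate side through $|\nabla v_h^+|\sim|\nabla v_h^-|$, while Assumption~\ref{assum_L1L2} guarantees that such a non-degenerate side always exists. A secondary subtlety, specific to $m=0$, is that ``large measure'' is not enough: one must upgrade it to ``contains a ball of radius $\sim h_T$'' so that polynomial norm equivalence applies, and the correction $\alpha d_{L_T}$ must be absorbed through an inverse inequality rather than estimated on its own.
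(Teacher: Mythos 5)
Your proof is correct and follows essentially the same route as the paper's: gradient comparability $|\nabla v_h^+|\sim|\nabla v_h^-|$ extracted from the discrete jump conditions, the existence of a ball of diameter $\gtrsim h_T$ on one side of the interface obtained from Assumption~\ref{assum_L1L2} by the same inscribed-ball/slab construction, and an inverse inequality to absorb the correction term $\alpha\, d_{L_T}$ in the $L^2$ bound. The only cosmetic differences are that you organize both estimates around the non-degenerate side (using polynomial norm equivalence $\|v_h^+\|_{L^2(T)}\lesssim\|v_h^+\|_{L^2(T^+)}$ and the inverse inequality on $T$ rather than on the ball $\tilde{B}_T$), and that you attach the labels $m=0$ and $m=1$ to the $L^2$ and $H^1$ cases correctly, whereas the paper's proof swaps these two labels (evidently a typo).
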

\begin{proof}
By (\ref{def_vhT}), it is straightforward to verify that $|\nabla v_h^+| \sim |\nabla v_h^-|$. The desired result (\ref{vh_pm_vh01}) for $m=0$ follows from the fact that
$$\|\nabla v_h^+\|_{L^2(T^-)}\sim \|\nabla v_h^-\|_{L^2(T^-)} \mbox{ and } \|\nabla v_h^-\|_{L^2(T^+)}\sim \|\nabla v_h^+\|_{L^2(T^+)}.$$

Next, we prove (\ref{vh_pm_vh01}) for $m=1$.
Note that $[\![v_h^\pm]\!]=v_h^+-v_h^-\in  \mathbb{P}_1(T)$ and $[\![v_h^\pm]\!]|_{L_T}=0$. We can write  $[\![v_h^\pm]\!]=d_{L_T} [\![\nabla v_h^\pm\cdot \bar{n}_T]\!]$, where $d_{L_T}$ is the signed distance function defined in (\ref{def_d_LT}). Since $\|d_{L_T}\|_{L^\infty(T)}\lesssim h_T$, we have 
\begin{equation}\label{pro_vh_pm1}
\|v_h^+\|_{L^2(T^-)}\lesssim \|v_h^-\|_{L^2(T^-)}+h_T |T^-|^{1/2}|[\![\nabla v_h^\pm]\!]|\lesssim \|v_h^-\|_{L^2(T^-)}+h_T^{1+N/2}|[\![\nabla v_h^\pm]\!]|.
\end{equation}
Note that both $|[\![\nabla v_h^\pm]\!]|\lesssim |\nabla v_h^+|$ and $|[\![\nabla v_h^\pm]\!]|\lesssim |\nabla v_h^-|$ hold. We now choose a superscript $s_0\in\{+,-\}$ such that there is a ball $\tilde{B}_T$ satisfying $\tilde{B}_T\subset T^{s_0}$ and 
$\mathrm{diam}(\tilde{B}_T)\gtrsim h_T$. The existence of such a ball $\tilde{B}_T$ is shown at the end of this proof. We then have
\begin{equation}\label{pro_vh_pm2}
|[\![\nabla v_h^\pm]\!]|\lesssim  |\nabla v_h^{s_0}|\lesssim  |\tilde{B}_T|^{-1/2}\|\nabla v_h^{s_0}\|_{L^2(\tilde{B}_T)}\lesssim h_T^{-N/2-1}\| v_h^{s_0}\|_{L^2(\tilde{B}_T)},
\end{equation}
where in the last inequality we use the standard  inverse inequality on $\tilde{B}_T$.
Combining (\ref{pro_vh_pm1}) and (\ref{pro_vh_pm2}), and using $\tilde{B}_T\subset T^{s_0}$, we obtain the desired result  (\ref{vh_pm_vh01}) for $m=1$.

Finally, we prove the existence of $\tilde{B}_T$ under Assumption~\ref{assum_L1L2}.
Let $B_T$ be the largest ball inscribed in $T$, and  let $L_1$ and $L_2$ be  two parallel planes bounding  $\Gamma_T$.
The planes $L_1$ and $L_2$ divide $B_T$ into subdomains $D_{0}$, $D_1$ and $D_2$ such that $D_1\subset T^+$ and $D_2\subset T^-$.
Let $B_i$ be the largest ball inscribed in $D_i$, $i=1,2$. Note that one of $D_1$ and $D_2$ may be empty. If $D_i=\emptyset$, we let $B_i=\emptyset$ and $\mathrm{diam}(B_i)=0$.
Now it is easy to see that 
$$\mathrm{dist}(L_1,L_2)+\mathrm{diam}(B_1)+\mathrm{diam}(B_2)\geq \mathrm{diam}(B_T)$$
Without loss of generality, assume $\mathrm{diam}(B_1)\geq \mathrm{diam}(B_2)$. We then choose $\tilde{B}_T=B_1$ and $s_0=+$. Hence, we have $\tilde{B}_T\subset T^{s_0}$. Using Assumption~\ref{assum_L1L2} we further have
\begin{equation*}
\begin{aligned}
2\mathrm{diam}(\tilde{B}_T)&\geq \mathrm{diam}(B_1)+\mathrm{diam}(B_2)\geq \mathrm{diam}(B_T)-\mathrm{dist}(L_1,L_2)\\
&\geq \mathrm{diam}(B_T)/2\gtrsim h_T.
\end{aligned}
\end{equation*}
This completes the proof of the lemma.
\end{proof}

A trace inequality for IFE functions is established in the following lemma.
\begin{lemma}\label{lema_trace}
For every  $T\in\mathcal{T}_h^\Gamma$, we have 
\begin{equation}\label{trace_IFE}
\|\nabla_h v_h\|_{L^2(\partial T)}\lesssim h_T^{-1/2}\|\nabla_h v_h\|_{L^2(T)}~\quad \forall v_h\in V_h^{\mathrm{IFE}}(T).
\end{equation}
\end{lemma}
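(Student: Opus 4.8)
The plan is to exploit the fact that for $v_h \in V_h^{\mathrm{IFE}}(T)$ the piecewise gradient $\nabla_h v_h$ is piecewise \emph{constant}, taking the value $\nabla v_h^+$ on $T^+$ and $\nabla v_h^-$ on $T^-$. Both sides of (\ref{trace_IFE}) then reduce to weighted sums of the two scalars $|\nabla v_h^+|$ and $|\nabla v_h^-|$. Writing $\partial T^{\pm}=\partial T\cap\overline{T^{\pm}}$ for the portion of $\partial T$ lying on each side of $\Gamma$, the left-hand side becomes
\begin{equation*}
\|\nabla_h v_h\|_{L^2(\partial T)}^2 = |\partial T^+|\,|\nabla v_h^+|^2 + |\partial T^-|\,|\nabla v_h^-|^2,
\end{equation*}
while the right-hand side is controlled through $h_T^{-1}\|\nabla_h v_h\|_{L^2(T)}^2 = h_T^{-1}\bigl(|T^+|\,|\nabla v_h^+|^2 + |T^-|\,|\nabla v_h^-|^2\bigr)$.

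First I would bound the left-hand side from above. Since $T$ is a simplex of diameter $h_T$, its boundary satisfies $|\partial T|\lesssim h_T^{N-1}$, so $|\partial T^{\pm}|\le|\partial T|\lesssim h_T^{N-1}$ and therefore $\|\nabla_h v_h\|_{L^2(\partial T)}^2 \lesssim h_T^{N-1}\bigl(|\nabla v_h^+|^2 + |\nabla v_h^-|^2\bigr)$.

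The crucial step, and the one that renders the constant independent of how $\Gamma$ cuts $T$, is to bound the right-hand side from below. Here I would invoke the gradient equivalence $|\nabla v_h^+|\sim|\nabla v_h^-|$, which follows from the defining conditions $\mathcal{J}_{i,T}(v_h^+,v_h^-)=0$, $i=0,\dots,N-1$, of $V_h^{\mathrm{IFE}}(T)$ in (\ref{def_vhT}) (as already recorded at the start of the proof of Lemma~\ref{lem_vh_pm_vh}). With this equivalence both $|\nabla v_h^+|^2$ and $|\nabla v_h^-|^2$ are comparable to a common value, so
\begin{equation*}
h_T^{-1}\bigl(|T^+|\,|\nabla v_h^+|^2 + |T^-|\,|\nabla v_h^-|^2\bigr) \gtrsim h_T^{-1}\bigl(|T^+| + |T^-|\bigr)\,|\nabla v_h^+|^2 = h_T^{-1}|T|\,|\nabla v_h^+|^2.
\end{equation*}
Shape regularity gives $|T|\sim h_T^N$, hence the right-hand side is $\gtrsim h_T^{N-1}|\nabla v_h^+|^2\sim h_T^{N-1}\bigl(|\nabla v_h^+|^2+|\nabla v_h^-|^2\bigr)$, matching the upper bound just obtained for the left-hand side. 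Combining the two estimates yields (\ref{trace_IFE}).

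The difficulty, and the reason a direct argument is needed, is the small-cut geometry: a naive application of the standard trace inequality separately on $T^+$ and $T^-$ fails because $|T^+|$ or $|T^-|$ may tend to zero, making the trace constant on the smaller piece blow up. The only nontrivial input that resolves this is precisely the gradient equivalence $|\nabla v_h^+|\sim|\nabla v_h^-|$ coming from the IFE connection conditions: it allows the \emph{full} volume $|T|\sim h_T^N$, rather than the possibly tiny $|T^{\pm}|$, to absorb the boundary measure, so the resulting constant is uniform in the location of the interface relative to the mesh.
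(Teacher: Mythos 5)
Your proof is correct and is essentially the paper's argument: the paper bounds $\|\nabla_h v_h\|^2_{L^2(\partial T)}\leq \sum_{s=\pm}\|\nabla v_h^s\|^2_{L^2(\partial T)}$, applies the trace inequality for the (globally defined) linear pieces over the whole element $T$, and then invokes Lemma~\ref{lem_vh_pm_vh} to return to the broken norm --- and Lemma~\ref{lem_vh_pm_vh} for $m=1$ is precisely the gradient equivalence $|\nabla v_h^+|\sim|\nabla v_h^-|$ that you use to let the full volume $|T|\sim h_T^N$ absorb the boundary measure. You have simply unpacked the same mechanism into an explicit computation with piecewise-constant gradients, so the two proofs coincide in substance.
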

\begin{proof}
Using the fact that $v_h^\pm \in \mathbb{P}_1(T)$ and (\ref{vh_pm_vh01}), we have
\begin{equation*}
\begin{aligned}
\|\nabla_h v_h\|^2_{L^2(\partial T)}&\leq \sum_{s=\pm}\|\nabla v_h^s\|^2_{L^2(\partial T)}\lesssim \sum_{s=\pm}h_T^{-1}\|\nabla v_h^s\|^2_{L^2(T)}\lesssim h_T^{-1}\|\nabla_h v_h\|^2_{L^2(T)}.
\end{aligned}
\end{equation*}
This completes the proof of the lemma.
\end{proof}

With the help of the above lemma, and similar to \cite[Lemma 5.3]{ji2023immersed}, we can derive the stability of the lifting $\mathbf{r}_F$. 
\begin{lemma}\label{lem_stab_lift}
For every  $F\in\mathcal{F}_h^\Gamma$, we have
\begin{equation*}
\|\mathbf{r}_F(v)\|_{L^2(T_1^F\cup T_2^F)}\lesssim h_F^{-1/2}\|v\|_{L^2(F)}\quad \forall v\in L^2(F).
\end{equation*}
\end{lemma}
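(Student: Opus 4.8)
The plan is to prove the lifting stability bound for $\mathbf{r}_F$ by testing the defining relation \eqn{def_lift} against the natural choice $\mathbf{q}=\mathbf{r}_F(v)$ itself, so that the left-hand side becomes a coercive quadratic form in $\mathbf{r}_F(v)$ and the right-hand side can be controlled by a trace estimate. First I would set $\mathbf{q}=\mathbf{r}_F(v)\in\mathbf{Q}_F$ in \eqn{def_lift} to obtain
\begin{equation*}
\int_{T_1^F\cup T_2^F}(\mathbb{B}\,\mathbf{r}_F(v))\cdot \mathbf{r}_F(v)=\int_F v\,\{(\mathbb{B}\,\mathbf{r}_F(v))\cdot n_F\}_F.
\end{equation*}
By the uniform positive-definiteness \eqn{def_betaM}, the left-hand side is bounded below by $\beta_m\|\mathbf{r}_F(v)\|_{L^2(T_1^F\cup T_2^F)}^2$ (where $\beta_m=\min_s\min(\beta_m^+,\beta_m^-)$), so the whole game is to bound the boundary integral on the right.

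Next I would estimate the right-hand side by Cauchy--Schwarz on the face $F$,
\begin{equation*}
\Bigl|\int_F v\,\{(\mathbb{B}\,\mathbf{r}_F(v))\cdot n_F\}_F\Bigr|\leq \|v\|_{L^2(F)}\,\bigl\|\{(\mathbb{B}\,\mathbf{r}_F(v))\cdot n_F\}_F\bigr\|_{L^2(F)}.
\end{equation*}
The key point is that $\mathbf{r}_F(v)|_{T_i^F}\in\nabla V_h^{\mathrm{IFE}}(T_i^F)$ is a piecewise gradient of an IFE function, so the trace inequality \eqn{trace_IFE} of Lemma~\ref{lema_trace} applies on each of the two elements sharing $F$. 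Since $\mathbb{B}$ is uniformly bounded by $\beta_M=\max_s\max(\beta_M^+,\beta_M^-)$ and the average over $F$ is controlled by the two one-sided traces, I would chain these to get
\begin{equation*}
\bigl\|\{(\mathbb{B}\,\mathbf{r}_F(v))\cdot n_F\}_F\bigr\|_{L^2(F)}\lesssim \|\mathbf{r}_F(v)\|_{L^2(\partial(T_1^F\cup T_2^F))}\lesssim h_F^{-1/2}\|\mathbf{r}_F(v)\|_{L^2(T_1^F\cup T_2^F)},
\end{equation*}
using shape-regularity to identify $h_F\sim h_{T_i^F}$. Here one must be a little careful that Lemma~\ref{lema_trace} is stated for the full element gradient $\nabla_h v_h$; since $\mathbf{r}_F(v)$ lies in $\nabla V_h^{\mathrm{IFE}}(T_i^F)$ there is an IFE function realizing it, so the lemma transfers directly.

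Combining the three displays, the factor $\|\mathbf{r}_F(v)\|_{L^2(T_1^F\cup T_2^F)}$ cancels once from each side, leaving
\begin{equation*}
\|\mathbf{r}_F(v)\|_{L^2(T_1^F\cup T_2^F)}\lesssim h_F^{-1/2}\|v\|_{L^2(F)},
\end{equation*}
which is exactly the claim. I expect the main obstacle to be the rigorous justification that the IFE trace inequality \eqn{trace_IFE} may be applied to the piecewise-gradient field $\mathbf{r}_F(v)$ and that the element-wise traces correctly bound the $L^2(F)$-norm of the average $\{\cdot\}_F$ with the stated $h_F$-power; once the trace step is in hand the algebra is routine. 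This mirrors the structure of \cite[Lemma 5.3]{ji2023immersed}, so I would lean on that argument for the delicate scaling bookkeeping.
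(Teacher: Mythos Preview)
Your proposal is correct and follows essentially the same approach the paper indicates: the paper does not spell out a proof but simply says the result follows ``with the help of the above lemma [the IFE trace inequality, Lemma~\ref{lema_trace}], and similar to \cite[Lemma~5.3]{ji2023immersed},'' which is precisely the test-$\mathbf{q}=\mathbf{r}_F(v)$, coercivity-plus-Cauchy--Schwarz-plus-trace argument you outline. The concern you flag about applying \eqn{trace_IFE} to $\mathbf{r}_F(v)$ is not a real obstacle, since by definition $\mathbf{r}_F(v)|_{T_i^F}\in\nabla V_h^{\mathrm{IFE}}(T_i^F)$ is exactly of the form $\nabla_h v_h$ for some $v_h\in V_h^{\mathrm{IFE}}(T_i^F)$.
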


We next consider the relation between the IFE function and its corresponding standard FE function.
For each $T\in\mathcal{T}_h^\Gamma$, we define $\widehat{\Pi}_T: V^{\mathrm{IFE}}_h(T)\rightarrow  V_h(T)$ by
\begin{equation}\label{def_Pi_hat}
\mathcal{N}_{F}(\widehat{\Pi}_T v_h)=\mathcal{N}_{F,T}^{\mathrm{IFE}}(v_h)\quad \forall F\in\mathcal{F}_T.
\end{equation}
For any $v_h\in V^{\mathrm{IFE}}_h(T)$, we define $\widehat{v}_h=\sum_{s=\pm}\chi_{T_h^s}v_h^s$, where $\chi_{T_h^s}$ is the characteristic function of $T_h^s$. By definition, we have $\widehat{v}_h\in C^0(T)$.
We note that $\widehat{\Pi}_Tv_h=\Pi_T\widehat{v}_h\not=\Pi_Tv_h$ due to the mismatch between $\Gamma $ and $L_T$ on $F$. 

\begin{lemma}\label{lem_hat_esti}
For every $T\in\mathcal{T}_h^\Gamma$, the following estimate holds:
\begin{equation}\label{hat_esti}
|v_h-\widehat{v}_h|_{H^m(\cup T^\pm)}\lesssim h_T^{5/2-2m}|v_h|_{H^1(\cup T^\pm)}~m=0,1,~\forall v_h\in V^{\mathrm{IFE}}_h(T).
\end{equation}
\end{lemma}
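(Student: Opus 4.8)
The plan is to localize the difference $v_h-\widehat v_h$ to the thin \emph{mismatch region} between the true interface $\Gamma$ and its planar surrogate $L_T$, and then exploit that this region has width $O(h_T^2)$. First I would observe that $v_h$ and $\widehat v_h$ are assembled from the \emph{same} two linear pieces $v_h^+$ and $v_h^-$, but with different partitions: $v_h$ selects the piece according to $\Gamma$, while $\widehat v_h$ selects it according to $L_T$. Hence $v_h=\widehat v_h$ wherever the two partitions agree, and on the symmetric–difference set
\[
T_\Delta:=(T^+\cap T_h^-)\cup(T^-\cap T_h^+)
\]
we have $v_h-\widehat v_h=\pm(v_h^+-v_h^-)=\pm[\![v_h^\pm]\!]$. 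Invoking the identity derived in the proof of Lemma~\ref{lem_vh_pm_vh}, namely $[\![v_h^\pm]\!]=d_{L_T}\,[\![\nabla v_h^\pm\cdot\bar n_T]\!]$ with $d_{L_T}$ the signed distance of (\ref{def_d_LT}), both $v_h-\widehat v_h$ and its gradient on $T_\Delta$ are governed by the constant vector $[\![\nabla v_h^\pm]\!]$ together with the scalar $d_{L_T}$.

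Next I would establish the two geometric facts driving the estimate. Since $\Gamma\in C^2$ and Assumption~\ref{assum_x0t} forces $L_T$ to pass within $O(h_T^2)$ of the point $\tilde x_T\in\Gamma_T$ with normal deviating by $O(h_T)$, the curved piece $\Gamma_T$ stays within normal distance $\lesssim h_T^2$ of $L_T$ across all of $T$ (the $O(h_T)$ tilt over a region of diameter $h_T$ contributes $O(h_T^2)$, matching the $O(h_T^2)$ offset). Consequently $T_\Delta\subset\{x\in T:|d_{L_T}(x)|\lesssim h_T^2\}$, which yields at once the pointwise bound $|d_{L_T}|\lesssim h_T^2$ on $T_\Delta$ and the measure bound $|T_\Delta|\lesssim h_T^{N-1}\cdot h_T^2=h_T^{N+1}$, since the slab has cross-section $\lesssim h_T^{N-1}$ and width $\lesssim h_T^2$.

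It then remains to convert the constant jump $|[\![\nabla v_h^\pm]\!]|$ into the right-hand side seminorm. As $\nabla v_h^\pm$ is constant on $T$, inverse scaling gives $|\nabla v_h^\pm|\sim h_T^{-N/2}\|\nabla v_h^\pm\|_{L^2(T)}$, and Lemma~\ref{lem_vh_pm_vh} bounds the latter by $h_T^{-N/2}|v_h|_{H^1(\cup T^\pm)}$; hence $|[\![\nabla v_h^\pm]\!]|\lesssim h_T^{-N/2}|v_h|_{H^1(\cup T^\pm)}$. Combining everything, for $m=1$ the gradient $\nabla(v_h-\widehat v_h)=\pm[\![\nabla v_h^\pm]\!]$ is constant on $T_\Delta$, so
\[
|v_h-\widehat v_h|_{H^1(\cup T^\pm)}^2\lesssim|[\![\nabla v_h^\pm]\!]|^2\,|T_\Delta|\lesssim h_T^{-N}|v_h|_{H^1(\cup T^\pm)}^2\cdot h_T^{N+1}=h_T\,|v_h|_{H^1(\cup T^\pm)}^2,
\]
which is the exponent $5/2-2=1/2$; for $m=0$ the extra factor $|d_{L_T}|^2\lesssim h_T^4$ from $[\![v_h^\pm]\!]=d_{L_T}[\![\nabla v_h^\pm\cdot\bar n_T]\!]$ upgrades this to $h_T^5$, i.e.\ the exponent $5/2$.

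The main obstacle is the geometric step: rigorously deriving $|T_\Delta|\lesssim h_T^{N+1}$ and $|d_{L_T}|\lesssim h_T^2$ on $T_\Delta$ from Assumption~\ref{assum_x0t} and the $C^2$ smoothness of $\Gamma$, that is, controlling the deviation of the curved $\Gamma_T$ from the plane $L_T$ uniformly over the whole element (so that both the tilt of the normal and the positional offset enter at the $O(h_T^2)$ level). Once this slab estimate is secured, the remainder is routine scaling of linear functions.
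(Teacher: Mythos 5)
Your proposal is correct and follows essentially the same route as the paper's proof: you localize to the same mismatch region (the paper's $T^\triangle=(T^+\backslash T_h^+)\cup(T^-\backslash T_h^-)$ equals your $T_\Delta$), use the identity $[\![v_h^\pm]\!]=d_{L_T}[\![\nabla v_h^\pm\cdot\bar n_T]\!]$ together with the bounds $\|d_{L_T}\|_{L^\infty(T^\triangle)}\lesssim h_T^2$ and $|T^\triangle|\lesssim h_T^{N+1}$, and convert the constant gradient jump into $|v_h|_{H^1(\cup T^\pm)}$ via Lemma~\ref{lem_vh_pm_vh}. The only cosmetic difference is that the paper treats $m=0,1$ simultaneously through the bound $\|d_{L_T}\|_{W^m_\infty(T^\triangle)}\lesssim h_T^{2-2m}$, whereas you handle the two cases separately.
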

\begin{proof}
Let $T^\triangle = (T^+\backslash T_h^+)\cup (T^-\backslash T_h^-)$. By definition, we find 
\begin{equation*}
|v_h-\widehat{v}_h|_{H^m(\cup T^\pm)}=|[\![v_h^\pm]\!]|_{H^m(T^\triangle)}=|d_{L_T} [\![\nabla v_h^\pm\cdot \bar{n}_T]\!]|_{H^m(T^\triangle)}.
\end{equation*}
Using Assumption~\ref{assum_x0t} and (\ref{def_d_LT}), we know that $\|d_{L_T}\|_{W^m_\infty(T^\triangle)}\lesssim h_T^{2-2m}$ and $|T^\triangle|\lesssim h_T^{N+1}$. Therefore, 
\begin{equation*}
|v_h-\widehat{v}_h|_{H^m(\cup T^\pm)}\lesssim h^{N/2+5/2-2m}|[\![\nabla v_h^\pm]\!]| \lesssim h^{5/2-2m}\|[\![\nabla v_h^\pm]\!]\|_{L^2(T)},
\end{equation*}
which together with (\ref{vh_pm_vh01}) yields the desired result (\ref{hat_esti}).
\end{proof}

The following lemma establishes the norm equivalence between the IFE function and its corresponding FE function.
\begin{lemma}\label{lem_stand_IFE_eq}
For every $T\in\mathcal{T}_h^\Gamma$, we have
\begin{equation}\label{eq_stand_IFE}
| v_h |_{H^m(\cup T^\pm)}\sim | \widehat{\Pi}_T v_h  |_{H^m(T)},~~m=0,1, ~~\forall v_h\in V_h^{\mathrm{IFE}}(T).
\end{equation}
\end{lemma}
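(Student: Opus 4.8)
The plan is to prove the norm equivalence (\ref{eq_stand_IFE}) by relating $v_h$ and $\widehat{\Pi}_T v_h$ through the intermediate function $\widehat{v}_h=\sum_{s=\pm}\chi_{T_h^s}v_h^s$. The key observation is that $\widehat{\Pi}_T v_h=\Pi_T\widehat{v}_h$, as noted just before Lemma~\ref{lem_hat_esti}, so I can decompose the comparison into two pieces: first the passage from $v_h$ to $\widehat{v}_h$, which is controlled by Lemma~\ref{lem_hat_esti}, and then the passage from $\widehat{v}_h$ to its nonconforming interpolant $\Pi_T\widehat{v}_h$, which is a standard finite-element operation.

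First I would handle the case $m=1$, which carries the essential content. Writing
$$|v_h|_{H^1(\cup T^\pm)}\leq |v_h-\widehat{v}_h|_{H^1(\cup T^\pm)}+|\widehat{v}_h|_{H^1(\cup T^\pm)},$$
the first term is bounded by $h_T^{1/2}|v_h|_{H^1(\cup T^\pm)}$ via (\ref{hat_esti}) with $m=1$, which is a small perturbation for $h_T$ sufficiently small. For the second term, since $\widehat{v}_h\in C^0(T)$ is piecewise linear on $T_h^\pm$, I would use the stability of the nonconforming interpolation operator $\Pi_T$ together with the estimate $|\Pi_T\widehat{v}_h|_{H^1(T)}\sim|\widehat{v}_h|_{H^1(\cup T^\pm)}$ (the latter following because $\Pi_T$ preserves the face-averaged degrees of freedom and $\widehat{v}_h$ is piecewise affine). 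Combining these with $\widehat{\Pi}_Tv_h=\Pi_T\widehat{v}_h$ gives $|v_h|_{H^1(\cup T^\pm)}\lesssim|\widehat{\Pi}_Tv_h|_{H^1(T)}$. The reverse inequality follows symmetrically, again absorbing the $O(h_T^{1/2})$ perturbation term from Lemma~\ref{lem_hat_esti}. For $m=0$, I would proceed analogously, now using (\ref{hat_esti}) with $m=0$, which gives an even higher power $h_T^{5/2}$ on the perturbation term, so the argument is only easier; the main equivalence $|\widehat{v}_h|_{L^2(\cup T^\pm)}\sim|\Pi_T\widehat{v}_h|_{L^2(T)}$ again rests on standard scaling for the nonconforming element.

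The main obstacle I anticipate is making the perturbation absorption rigorous when $h_T$ is merely bounded by the mesh size rather than arbitrarily small; since Lemma~\ref{lem_hat_esti} supplies a factor $h_T^{5/2-2m}$ (i.e.\ $h_T^{1/2}$ for $m=1$), I must ensure $h_\Gamma$ is small enough that the perturbation constant times $h_T^{1/2}$ stays strictly below one, allowing the term $|v_h-\widehat v_h|$ to be moved to the left-hand side. This is consistent with the smallness hypotheses already imposed through Assumptions~\ref{assum_phT} and \ref{assum_L1L2}. A secondary point requiring care is establishing the clean equivalence $|\widehat{v}_h|_{H^m(\cup T^\pm)}\sim|\Pi_T\widehat{v}_h|_{H^m(T)}$ for the continuous piecewise-linear function $\widehat{v}_h$; here I would rely on the fact that $\Pi_T$ acts as the identity on genuinely affine functions and, more generally, is $H^m$-stable on the broken polynomial space, invoking Lemma~\ref{lem_vh_pm_vh} if needed to pass between one-sided and broken seminorms. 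Once these two ingredients are in place, the equivalence (\ref{eq_stand_IFE}) follows by a triangle-inequality argument in both directions.
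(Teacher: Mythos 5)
There is a genuine gap, and it sits exactly where you flagged a "secondary point requiring care" — that point is in fact the entire content of the lemma. Your architecture (compare $v_h$ with $\widehat{v}_h$ via Lemma~\ref{lem_hat_esti}, then compare $\widehat{v}_h$ with $\Pi_T\widehat{v}_h=\widehat{\Pi}_Tv_h$) does deliver one direction: $|\widehat{\Pi}_Tv_h|_{H^m(T)}\lesssim|v_h|_{H^m(\cup T^\pm)}$ follows from $H^m$-stability of the Crouzeix--Raviart interpolant plus Lemma~\ref{lem_hat_esti} (with, for $m=0$, a broken inverse inequality through Lemma~\ref{lem_vh_pm_vh} to absorb the $h_T^{5/2}|v_h|_{H^1(\cup T^\pm)}$ term). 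But the reverse direction you reduce to the claim $|\widehat{v}_h|_{H^m(\cup T^\pm)}\lesssim|\Pi_T\widehat{v}_h|_{H^m(T)}$, justified by "$\Pi_T$ preserves the face-averaged degrees of freedom and $\widehat{v}_h$ is piecewise affine" and "standard scaling for the nonconforming element." That claim is false on the space of continuous piecewise affine functions: this space (w.r.t.\ the cut of $T$ by $L_T$) has dimension $N+2$, while $\Pi_T$ maps into $\mathbb{P}_1(T)$ of dimension $N+1$, so $\Pi_T$ has a nontrivial kernel there. Concretely, take $\nabla w^+=|T_h^-|\,\bar{n}_T$ and $\nabla w^-=-|T_h^+|\,\bar{n}_T$ with $w^+-w^-=|T|\,d_{L_T}$; then $\widehat{w}=\chi_{T_h^+}w^++\chi_{T_h^-}w^-$ is continuous and nonconstant, yet $\nabla\Pi_T\widehat{w}=|T|^{-1}\left(|T_h^+|\nabla w^++|T_h^-|\nabla w^-\right)=0$. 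So no interpolation-theoretic or scaling argument can give the lower bound; it holds only because $\widehat{v}_h$ comes from $V_h^{\mathrm{IFE}}(T)$, whose flux condition $[\![\mathbb{B}_T^\pm\nabla v_h^\pm\cdot\bar{n}_T]\!]=0$ excludes precisely this kernel — and one must additionally show the resulting constant is uniform in the cut position, since $|T_h^+|/|T|$ ranges over all of $[0,1]$.

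This missing ingredient is what the paper's proof supplies. Using Remark~\ref{remark_IFEbasis} and (\ref{def_Pi_hat}), the paper writes the exact identity $v_h=\widehat{\Pi}_Tv_h+\widehat{\alpha}\,\phi_{T,J}$ with $\widehat{\alpha}=-[\![\mathbb{B}_T^\pm\nabla(\widehat{\Pi}_Tv_h)\cdot\bar{n}_T]\!]\left((\bar{n}_T^\top\mathbb{B}_T^+\bar{n}_T)c_T\right)^{-1}$, and then invokes the cut-uniform lower bound $c_T\geq\min(1,\beta_m^-/\beta_M^+)>0$ from (\ref{esti_cT}) to get $|\widehat{\alpha}|\lesssim|\nabla\widehat{\Pi}_Tv_h|$; combined with $|\phi_{T,J}|_{W^m_\infty(\cup T^\pm)}\lesssim h_T^{1-m}$ and an inverse inequality, this gives $|v_h|_{H^m(\cup T^\pm)}\lesssim|\widehat{\Pi}_Tv_h|_{H^m(T)}$ directly, with no absorption and no detour through $\widehat{v}_h$. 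To repair your proof you would need to replace the asserted "standard" equivalence by this representation (or otherwise prove the inverse-type bound on the image of $V_h^{\mathrm{IFE}}(T)$ under $v_h\mapsto\widehat{v}_h$ using the flux condition, with constants independent of where $L_T$ cuts $T$); as written, the inequality $|v_h|_{H^m(\cup T^\pm)}\lesssim|\widehat{\Pi}_Tv_h|_{H^m(T)}$ is unproven.
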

\begin{proof}
Using Remark~\ref{remark_IFEbasis}, (\ref{def_phi_alpha}) and  (\ref{def_Pi_hat}), we deduce the following identity for any $v_h\in V_h^{\mathrm{IFE}}(T)$:
\begin{equation}\label{pro_lem_stand_0}
v_h=\widehat{\Pi}_T v_h+\widehat{\alpha}  \phi_{T,J} ~~ \mbox{ with } ~~ \widehat{\alpha}:=-[\![\mathbb{B}_T^\pm\nabla (\widehat{\Pi}_T v_h)\cdot \bar{n}_T]\!]\left((\bar{n}_T^\top\mathbb{B}_T^+\bar{n}_T)c_T\right)^{-1}.
\end{equation}
From (\ref{esti_cT}), we have $|\widehat{\alpha}|\lesssim |\nabla \widehat{\Pi}_T v_h|$. Additionally, using (\ref{def_omega_pm}) and (\ref{def_d_LT}), we find 
$$|\phi_{T,J}|_{W^m_\infty(\cup T^\pm)}\lesssim h_T^{1-m}, \quad m=0,1.$$
Therefore,
\begin{equation}\label{pro_lem_stand_1}
|\widehat{\alpha}\phi_{T,J}|_{H^m(\cup T^\pm)}\lesssim h_T^{1-m}|\widehat{\Pi}_T v_h|_{H^1(T)}\lesssim |\widehat{\Pi}_T v_h|_{H^m(T)}, ~m=0,1,
\end{equation}
where we used the standard inverse inequality in the last inequality. Combining (\ref{pro_lem_stand_0}) and (\ref{pro_lem_stand_1}),  we obtain the first inequality of (\ref{eq_stand_IFE}):
\begin{equation*}
|v_h|_{H^m(\cup T^\pm)}\lesssim | \widehat{\Pi}_T v_h  |_{H^m(T)}.
\end{equation*}

To establish the reverse inequality, note that
\begin{equation*}
\widehat{\Pi}_T v_h=\sum_{F\in\mathcal{F}_T}\mathcal{N}_{F}(\widehat{\Pi}_T v_h)\lambda_{F,T}=\sum_{F\in\mathcal{F}_T}\mathcal{N}_{F,T}^{\mathrm{IFE}}(v_h)\lambda_{F,T},
\end{equation*}
where the last identity follows from (\ref{def_Pi_hat}).
Using $|\lambda_{F,T}|_{W^m_\infty(T)}\lesssim h_T^{-m}$, we get
\begin{equation*}
\begin{aligned}
| \widehat{\Pi}_T v_h  |_{H^m(T)}&\lesssim h_T^{N/2-m}\sum_{F\in\mathcal{F}_T}|\mathcal{N}_{F,T}^{\mathrm{IFE}}(v_h)|\\
&\lesssim  h_T^{N/2-m}\sum_{F\in\mathcal{F}_T} \sum_{s=\pm} |F|^{-1/2}\|v_h^s\|_{L^2(F)}.
\end{aligned}
\end{equation*}
Using $|F| \gtrsim h_T^{N-1}$, the standard  trace inequality, the standard inverse inequality, and Lemma~\ref{lem_vh_pm_vh}, we deduce 
\begin{equation*}
\begin{aligned}
&| \widehat{\Pi}_T v_h  |_{H^m(T)}\lesssim h_T^{1/2-m}\sum_{s=\pm} (h_T^{-1/2}\|v_h^s\|_{L^2(T)}+h_T^{1/2}|v_h^s|_{H^1(T)})\\
&\qquad \qquad\lesssim \sum_{s=\pm} (h_T^{-m}\|v_h^s\|_{L^2(T)}+|v_h^s|_{H^m(T)})\lesssim  h_T^{-m}\|v_h\|_{L^2(T)}+|v_h|_{H^m(\cup T^\pm)},
\end{aligned}
\end{equation*}
which completes the proof of (\ref{eq_stand_IFE}) for $m=0$. For $m=1$, applying the triangle inequality and  Lemma~\ref{lem_hat_esti}, we have
\begin{equation*}
\begin{aligned}
| \widehat{\Pi}_T v_h|_{H^1(T)}&=| \widehat{\Pi}_T v_h-c  |_{H^1(T)}\lesssim h_T^{-1}\|v_h-c\|_{L^2(T)}+|v_h|_{H^1(\cup T^\pm)}\\
&\lesssim h_T^{-1}\|\widehat{v}_h-c\|_{L^2(T)}+h_T^{-1}\|v_h-\widehat{v}_h\|_{L^2(T)}+|v_h|_{H^1(\cup T^\pm)}\\
&\lesssim |\widehat{v}_h|_{H^1(T)}+ |v_h|_{H^1(\cup T^\pm)}.
\end{aligned}
\end{equation*}
Finally, using the triangle inequality and Lemma~\ref{lem_hat_esti} again, we get
\begin{equation*}
| \widehat{\Pi}_T v_h|_{H^1(T)}\lesssim |v-\widehat{v}_h|_{H^1(\cup T^\pm)}+ |v_h|_{H^1(\cup T^\pm)}\lesssim |v_h|_{H^1(\cup T^\pm)}.
\end{equation*}
This completes the proof of the lemma.
\end{proof}

Using the above lemma, we can establish a bound on the jump of IFE functions across faces.
\begin{lemma}\label{lem_jum_less}
For every $F\in \mathcal{F}_h$, let $\mathcal{T}_h^F$ be the set of elements having $F$ as a face. Then we have
\begin{equation}\label{jum_less}
\|[v_h]_F\|_{L^2(F)}^2\lesssim h_F\sum_{T\in\mathcal{T}_h^F}|v_h|^2_{H^1(\cup T^\pm)} \quad  \forall v_h\in V_{h}^{\rm IFE}.
\end{equation}
\end{lemma}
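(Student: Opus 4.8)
The plan is to bound the jump $\|[v_h]_F\|_{L^2(F)}$ by reducing it, via the continuity of the degrees of freedom across $F$, to a quantity controlled by the standard Crouzeix--Raviart functions $\widehat{\Pi}_T v_h$, and then invoke the norm-equivalence of Lemma~\ref{lem_stand_IFE_eq} to return to $|v_h|_{H^1(\cup T^\pm)}$. The starting point is the observation that $v_h\in V_h^{\rm IFE}$ enforces continuity of the IFE degrees of freedom $\mathcal{N}^{\rm IFE}_{F,T}$ across each interior face. For the standard nonconforming FE function $\widehat{\Pi}_T v_h$ we know $\mathcal{N}_F(\widehat{\Pi}_T v_h)=\mathcal{N}^{\rm IFE}_{F,T}(v_h)$ by (\ref{def_Pi_hat}), so the means $|F|^{-1}\int_F \widehat{\Pi}_{T_1^F}v_h$ and $|F|^{-1}\int_F \widehat{\Pi}_{T_2^F}v_h$ coincide; that is, $\int_F[\widehat{\Pi}_T v_h]_F=0$. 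This is the familiar situation for Crouzeix--Raviart elements, where the broken function has matching face averages and one controls its jump by a Poincar\'e/trace argument on each element.

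The key steps, in order, are as follows. First I would write $[v_h]_F = [\widehat{\Pi}_T v_h]_F + [v_h-\widehat{\Pi}_T v_h]_F$ and estimate the two contributions separately. For the first term, since $\widehat{\Pi}_T v_h$ is a genuine piecewise-$\mathbb{P}_1$ function whose face average is continuous across $F$, a standard scaled trace inequality combined with a Poincar\'e-type bound gives $\|[\widehat{\Pi}_T v_h]_F\|^2_{L^2(F)}\lesssim h_F\sum_{T\in\mathcal{T}_h^F}|\widehat{\Pi}_T v_h|^2_{H^1(T)}$; here the matching averages kill the constant modes so only the $H^1$-seminorm survives. For the second term, I would control $\|[v_h-\widehat{\Pi}_T v_h]_F\|_{L^2(F)}$ elementwise. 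On an interface element, $v_h-\widehat{\Pi}_T v_h = \widehat{\alpha}\,\phi_{T,J}$ by (\ref{pro_lem_stand_0}), with $|\widehat{\alpha}|\lesssim|\nabla\widehat{\Pi}_T v_h|$ from (\ref{esti_cT}) and $|\phi_{T,J}|_{W^m_\infty}\lesssim h_T^{1-m}$; a trace inequality then yields a bound of the form $h_F^{1/2}|\widehat{\Pi}_T v_h|_{H^1(T)}$, so this term is again of the right order. On a non-interface element the difference vanishes. Finally I would assemble these elementwise bounds, apply Lemma~\ref{lem_stand_IFE_eq} to replace every $|\widehat{\Pi}_T v_h|_{H^1(T)}$ by $|v_h|_{H^1(\cup T^\pm)}$, and conclude (\ref{jum_less}). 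For boundary faces the analysis simplifies since $[v_h]_F=v_h$ and the single-sided trace inequality applies directly.

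The main obstacle I anticipate is the first term: extracting the factor $h_F$ \emph{and} eliminating the $L^2$-norm of $\widehat{\Pi}_T v_h$ so that only the seminorm remains. The crux is that the two one-sided traces on $F$ have equal averages, so their jump has zero mean on $F$; this lets me subtract a common constant and apply a scaled trace/Poincar\'e inequality that absorbs the constant and produces the seminorm with the correct $h_F$ power. Care is needed to handle the non-interface/interface mismatch cleanly --- the jump $[v_h]_F$ mixes an IFE side with possibly a standard side, and the interface face $F\in\mathcal{F}_h^\Gamma$ may be cut by $\Gamma$ away from $L_T$ --- but since all the work is pushed onto the standard functions $\widehat{\Pi}_T v_h$ through the identity (\ref{pro_lem_stand_0}), the shape-regularity of the mesh suffices and no dependence on the interface location enters the constants.
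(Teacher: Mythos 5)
Your proof is correct, but it routes the key (interface-face) case differently than the paper does. The paper argues by cases and, for $F\in\mathcal{F}_h^\Gamma$ with both neighbors in $\mathcal{T}_h^\Gamma$, expands $v_h$ on $F$ in the global IFE basis functions $\phi_{F_i}$ with coefficients $a_i=\mathcal{N}_{F_i}(\widehat{\Pi}_T v_h)$, writes $[v_h]_F=[v_h-a_0]_F$, applies the $L^\infty$ bound (\ref{esti_basis}) to the basis-function jumps, and then converts $\sum_i(a_i-a_0)^2 h^{N-2}$ into $|\widehat{\Pi}_h v_h|^2_{H^1}$ via the discrete $H^1$-seminorm identity, before finishing with Lemma~\ref{lem_stand_IFE_eq}. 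You instead subtract the Crouzeix--Raviart part wholesale: the splitting $v_h=\widehat{\Pi}_T v_h+\widehat{\alpha}\,\phi_{T,J}$ from (\ref{pro_lem_stand_0}) lets you handle $[\widehat{\Pi}_T v_h]_F$ by the classical CR jump estimate (legitimate, since by (\ref{def_Pi_hat}) and the conformity condition in $V_h^{\rm IFE}$ the face averages of the two one-sided CR traces coincide, so the jump has zero mean on $F$), while the correction is controlled by $|\widehat{\alpha}|\lesssim|\nabla\widehat{\Pi}_T v_h|$ and $\|\phi_{T,J}\|_{L^\infty(T)}\lesssim h_T$, which after scaling ($|\nabla\widehat{\Pi}_T v_h|\sim h_T^{-N/2}\|\nabla\widehat{\Pi}_T v_h\|_{L^2(T)}$, $|F|^{1/2}\sim h_T^{(N-1)/2}$) indeed gives $h_F^{1/2}|\widehat{\Pi}_T v_h|_{H^1(T)}$. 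Both arguments funnel through Lemma~\ref{lem_stand_IFE_eq} at the end. Your route buys a uniform treatment --- no case analysis, since the correction vanishes on non-interface elements, so mixed interface/non-interface faces come for free --- and avoids the discrete-seminorm identity; the paper's route stays entirely within its explicit IFE basis estimates and never needs the standard CR jump bound as an external ingredient. One caveat, shared with the paper: for a boundary face and general $v_h\in V_h^{\rm IFE}$ (e.g.\ a constant) the estimate (\ref{jum_less}) is simply false, and neither your ``single-sided trace inequality'' remark nor the paper's appeal to a ``standard result'' fixes that, because the mechanism that removes the $L^2$ term is precisely the zero mean of the jump, available only on interior faces (or on $\mathcal{F}_h^\partial$ for $V_{h,0}^{\rm IFE}$). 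This is harmless here, as the lemma is only ever invoked for faces in $\mathcal{F}_h^\Gamma$, which are interior.
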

\begin{proof}
If $F\in\mathcal{F}_h^{non}$ and $\mathcal{T}_h^F\subset\mathcal{T}_h^{non}$, (\ref{jum_less}) is a standard result. If $F\in\mathcal{F}_h^{non}$ and $\mathcal{T}_h^F\cap \mathcal{T}_h^\Gamma\not=\emptyset$, we can use Lemma~\ref{lem_vh_pm_vh} to get the desired result 
$$\|[v_h]_F\|_{L^2(F)}^2\lesssim h_F\sum_{T\in\mathcal{T}_h^F}|v_h^{s_0}|^2_{H^1(T)} \lesssim h_F\sum_{T\in\mathcal{T}_h^F}|v_h|^2_{H^1(\cup T^\pm)},$$
where the superscript $s_0=+$ or $-$ is chosen such that $F\subset \Omega^{s_0}$.
Now it remains to consider the case $F\in\mathcal{F}_h^\Gamma$ with $\mathcal{T}_h^F=\{T_1^F, T_2^F\}\subset \mathcal{T}_h^\Gamma$. For convenience, we let $F_i$, $i=0,\cdots,N$, be the faces of $T_1^F$ and let $F_i$, $i=0,-1,\cdots,-N$, be the faces of $T_2^F$. Clearly, $F=F_0$. Let $\phi_{F_i}$ be the global IFE basis function associated with $F_i$.
Using (\ref{def_Pi_hat}), we know that on $F$, 
\begin{equation*}
v_h=\sum_{i=-N}^Na_i\phi_{F_i},\quad a_i=\mathcal{N}_{F_i}(\widehat{\Pi}_T v_h) ~\mbox{ with } ~T=T_1^F \mbox{ or } T_2^F.
\end{equation*}
Therefore, we have
\begin{equation*}
\|[v_h]_F\|^2_{L^2(F)}=\|[v_h-a_0]_F\|^2_{L^2(F)}\lesssim \sum_{i=-N}^N (a_i-a_0)^2\|[\phi_{F_i}]_F\|^2_{L^2(F)}.
\end{equation*}
By (\ref{esti_basis}), we further have
\begin{equation*}
\|[v_h]_F\|^2_{L^2(F)}\lesssim \sum_{i=1}^N (a_i-a_0)^2h_{T_1^F}^{N-1}+\sum_{i=-N}^{-1} (a_i-a_0)^2h_{T_2^F}^{N-1}.
\end{equation*}
Using the standard result about the discrete $H^1$ semi-norm:
$$\sum_{i=1}^N (a_i-a_0)^2h_{T_1^F}^{N-2} \sim |\widehat{\Pi}_h v_h|^2_{H^1(T_1^F)},$$
we obtain 
\begin{equation*}
\|[v_h]_F\|^2_{L^2(F)} \lesssim h_F\sum_{T\in\mathcal{T}_h^F}|\widehat{\Pi}_h v_h|^2_{H^1(T)},
\end{equation*}
which together with (\ref{eq_stand_IFE}) yields the desired result (\ref{jum_less}).
\end{proof}

Using Lemmas \ref{lema_trace}, \ref{lem_stab_lift} and \ref{lem_jum_less}, 
we immediately derive the following norm equivalence on $V_{h}^{\rm IFE}$.
\begin{lemma}\label{lema_equ}
We have 
\begin{equation*}
\interleave v_h \interleave_h \sim \|v_h\|_{h}\quad \forall v_h\in V_{h}^{\rm IFE}.
\end{equation*}
\end{lemma}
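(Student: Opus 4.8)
The plan is to prove the two inequalities separately. Since $\interleave v_h \interleave_h^2$ equals $\|v_h\|_h^2$ plus three manifestly nonnegative contributions (the weighted boundary term, the scaled jump term, and $s_h(v_h,v_h)$), the estimate $\|v_h\|_h \le \interleave v_h \interleave_h$ is immediate, and all the work lies in the reverse bound $\interleave v_h \interleave_h \lesssim \|v_h\|_h$. For this I would bound each of the three extra terms by $\|v_h\|_h^2$, converting every face sum into an element sum via shape-regularity ($h_F \sim h_T$) and the bounded overlap from Assumption~\ref{assumption_finite}, and passing between $\mathbb{B}$-weighted quantities and plain gradient norms through the ellipticity bounds \eqref{def_betaM}; in particular $\|v_h\|_h^2 \gtrsim \min(\beta_m^+,\beta_m^-)\,\|\nabla_h v_h\|_{L^2(\Omega)}^2$. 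I would also record at the outset that every $F\in\mathcal{F}_h^\Gamma$ is shared by two interface elements, since $\Gamma$ must enter both neighbours of $F$, so the IFE-specific lemmas apply on both sides of each interface face.

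First, for the boundary term $\sum_{F\in\mathcal{F}_h^\Gamma} h_F\|\{\mathbb{B}\nabla_h v_h\}_F\|^2_{L^2(F)}$, I would apply the trace inequality of Lemma~\ref{lema_trace} on each element sharing $F$ to get $\|\nabla_h v_h\|_{L^2(F)}^2 \lesssim h_T^{-1}\|\nabla_h v_h\|_{L^2(T)}^2$; combined with the boundedness of $\mathbb{B}$ and $h_F \sim h_T$, this yields $h_F\|\{\mathbb{B}\nabla_h v_h\}_F\|^2_{L^2(F)} \lesssim \sum_{T\in\mathcal{T}_h^F}\|\nabla_h v_h\|_{L^2(T)}^2$. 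Summing over faces, with each element counted a bounded number of times, and invoking the lower bound in \eqref{def_betaM}, absorbs this into $\|v_h\|_h^2$. Second, the jump term $\sum_{F\in\mathcal{F}_h^\Gamma} h_F^{-1}\|[v_h]_F\|^2_{L^2(F)}$ is controlled directly by Lemma~\ref{lem_jum_less}, which already furnishes $h_F^{-1}\|[v_h]_F\|^2_{L^2(F)}\lesssim \sum_{T\in\mathcal{T}_h^F}|v_h|^2_{H^1(\cup T^\pm)}$.

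Third, for the stabilization term $s_h(v_h,v_h)$, I would first use the boundedness of $\mathbb{B}$ and then the lifting stability of Lemma~\ref{lem_stab_lift} to reduce $\int_{T_1^F\cup T_2^F}(\mathbb{B}\mathbf{r}_F([v_h]_F))\cdot\mathbf{r}_F([v_h]_F)$ to $\|\mathbf{r}_F([v_h]_F)\|_{L^2(T_1^F\cup T_2^F)}^2 \lesssim h_F^{-1}\|[v_h]_F\|^2_{L^2(F)}$, which is exactly the second term. Summing all three contributions over $\mathcal{F}_h^\Gamma$ and applying the finite-overlap bound then gives $\interleave v_h \interleave_h^2 \lesssim \|v_h\|_h^2$, which together with the trivial direction completes the equivalence.

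Since each required estimate is already packaged as a cited lemma, there is no genuinely hard analytic step; the only points demanding care are purely organizational, namely verifying that every face sum converts to an element sum with a constant independent of how $\Gamma$ cuts the mesh — precisely the robustness that Lemmas~\ref{lem_vh_pm_vh}--\ref{lem_jum_less} were designed to guarantee — and tracking the ellipticity constants so that the final hidden constant depends only on $\beta_m^\pm$, $\beta_M^\pm$, the shape-regularity parameter, and the overlap constant $M$.
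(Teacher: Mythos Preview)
Your proposal is correct and follows essentially the same approach as the paper, which simply states that the equivalence follows immediately from Lemmas~\ref{lema_trace}, \ref{lem_stab_lift}, and \ref{lem_jum_less}. Your write-up spells out exactly how these three lemmas handle, respectively, the weighted flux term, the stabilization term, and the jump term, which is precisely the intended argument; the only superfluous ingredient is the appeal to Assumption~\ref{assumption_finite}, since here each interface face touches exactly two elements and no $\mathscr{B}_T$-overlap count is needed.
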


\subsection{Consistency}
Using (\ref{p1}) and (\ref{method2}),  we have the error equation:
\begin{equation}\label{eqn_consis}
A_h(u-u_h,v_h)=\sum_{F\in\mathcal{F}_h^{non}}\int_F \mathbb{B}\nabla u\cdot n_F [v_h]_F-\int_\Gamma \mathbb{B}^-\nabla u^-\cdot n[v_h]_\Gamma:=(\mathrm{I})_1+(\mathrm{I})_2
\end{equation}
for all $v_h\in V_{h,0}^{\rm IFE}$

Similar to  (5.24) in \cite{ji2023immersed}, we obtain the following estimate for the first term:
\begin{equation}\label{est_I1}
|(\mathrm{I})_1| \lesssim h\sum_{s=\pm}\|u_E^s\|_{H^2(\Omega)}\|v_h\|_h \lesssim h\|u\|_{H^2(\cup\Omega^\pm)}\|v_h\|_h.
\end{equation}

For the second term $(\mathrm{I})_2$, we introduce the following lemma.
\begin{lemma}\label{lem_vh_gamma}
We have
\begin{equation*}
\|[v_h]_\Gamma\|^2_{L^2(\Gamma)}\lesssim h^{3}\sum_{T\in\mathcal{T}_h^\Gamma}|v_h|^2_{H^1(\cup T^\pm)} \quad \forall v_h\in V_{h}^{\rm IFE}.
\end{equation*}
\end{lemma}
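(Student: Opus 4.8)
The plan is to localize to a single interface element and exploit the defining structure of IFE functions, namely that their jump across the interface vanishes identically on the approximating plane $L_T$. First I would split the left-hand side as $\|[v_h]_\Gamma\|^2_{L^2(\Gamma)}=\sum_{T\in\mathcal{T}_h^\Gamma}\|[\![v_h^\pm]\!]\|^2_{L^2(\Gamma_T)}$, using that $[v_h]_\Gamma=v_h^+|_\Gamma-v_h^-|_\Gamma=[\![v_h^\pm]\!]$ on each $\Gamma_T$ and that the pieces $\Gamma_T=\Gamma\cap T$ partition $\Gamma$. This reduces the claim to a per-element estimate of the form $\|[\![v_h^\pm]\!]\|^2_{L^2(\Gamma_T)}\lesssim h_T^3|v_h|^2_{H^1(\cup T^\pm)}$.

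The key structural fact, already used in the proofs of Lemmas~\ref{lem_vh_pm_vh} and \ref{lem_hat_esti}, is that for $v_h\in V_h^{\mathrm{IFE}}(T)$ the difference $[\![v_h^\pm]\!]=v_h^+-v_h^-\in\mathbb{P}_1(T)$ vanishes on $L_T$, so it factors as $[\![v_h^\pm]\!]=d_{L_T}\,[\![\nabla v_h^\pm\cdot\bar{n}_T]\!]$ with the scalar $[\![\nabla v_h^\pm\cdot\bar{n}_T]\!]$ constant. Consequently $\|[\![v_h^\pm]\!]\|^2_{L^2(\Gamma_T)}=|[\![\nabla v_h^\pm\cdot\bar{n}_T]\!]|^2\,\|d_{L_T}\|^2_{L^2(\Gamma_T)}$, and I would estimate the two factors independently.

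For the geometric factor I would use the bound $\|d_{L_T}\|_{L^\infty(\Gamma_T)}\lesssim h_T^2$, which is exactly the $m=0$ estimate $\|d_{L_T}\|_{L^\infty(T^\triangle)}\lesssim h_T^2$ from Lemma~\ref{lem_hat_esti} together with the observation that $\Gamma_T\subset\overline{T^\triangle}$, the thin mismatch region between $\Gamma$ and $L_T$; this $O(h_T^2)$ accuracy is precisely what Assumption~\ref{assum_x0t} and the $C^2$-smoothness of $\Gamma$ guarantee. Combined with $|\Gamma_T|\lesssim h_T^{N-1}$ this gives $\|d_{L_T}\|^2_{L^2(\Gamma_T)}\lesssim h_T^{N+3}$. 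For the coefficient factor, since $v_h^\pm\in\mathbb{P}_1(T)$ the gradients are constant, so for an appropriate $s\in\{+,-\}$ one has $|[\![\nabla v_h^\pm\cdot\bar{n}_T]\!]|\lesssim|\nabla v_h^s|=|T|^{-1/2}\|\nabla v_h^s\|_{L^2(T)}\lesssim h_T^{-N/2}|v_h|_{H^1(\cup T^\pm)}$, where the final step invokes Lemma~\ref{lem_vh_pm_vh}. Multiplying the two factors yields the per-element bound $h_T^{-N}\cdot h_T^{N+3}=h_T^3$ times $|v_h|^2_{H^1(\cup T^\pm)}$, and summing over $T\in\mathcal{T}_h^\Gamma$ with $h_T\leq h$ gives the stated $h^3$.

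The only delicate point — and the reason the exponent lands at $h^3$ rather than the naive $h^1$ — is the $O(h_T^2)$ bound on $d_{L_T}$ restricted to $\Gamma_T$, which encodes the second-order approximation of the curved interface by the plane $L_T$; the remaining steps are routine scaling and inverse-estimate bookkeeping that rely entirely on facts already established earlier in the excerpt.
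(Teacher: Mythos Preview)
Your proposal is correct and follows essentially the same approach as the paper: localize to each interface element, factor the jump as $[\![v_h^\pm]\!]=d_{L_T}\,[\![\nabla v_h^\pm\cdot\bar{n}_T]\!]$, invoke the $O(h_T^2)$ bound on $d_{L_T}|_{\Gamma_T}$, and control the gradient factor via Lemma~\ref{lem_vh_pm_vh}. The only cosmetic difference is that the paper routes the surface-to-volume passage through the trace inequality~\eqref{tac_ine} (bounding $\|\nabla v_h^s\|_{L^2(\Gamma_T)}$ by $h_T^{-1/2}\|\nabla v_h^s\|_{L^2(T)}$), whereas you use $|\Gamma_T|\lesssim h_T^{N-1}$ and $|T|^{-1/2}\lesssim h_T^{-N/2}$ directly; both yield the same power count.
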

\begin{proof}
On each element $T$,  since $[\![v_h^\pm]\!]$ is linear and $[\![v_h^\pm]\!]|_{L_T}=0$, we have
$$[v_h]_{\Gamma}(x)=[\![v_h^\pm]\!](x)=[\![\nabla v_h^\pm \cdot \bar{n}_T]\!]d_{L_T}(x)~~\forall x\in \Gamma_T.$$
 Similar to (\ref{est_x_t_x_star}), there holds $\|d_{L_T}\|_{L^\infty(\Gamma_T)}\lesssim h_T^2$. Therefore, 
\begin{equation*}
\begin{aligned}
\|[v_h]_\Gamma\|^2_{L^2(\Gamma)}&=\sum_{T\in\mathcal{T}_h^\Gamma}\int_{\Gamma_T}\left|[\![\nabla v_h^\pm \cdot \bar{n}_T]\!]d_{L_T}\right|^2\lesssim h^4 \sum_{T\in\mathcal{T}_h^\Gamma}\int_{\Gamma_T}\left|[\![\nabla v_h^\pm \cdot \bar{n}_T]\!]\right|^2\\
&\lesssim h^4\sum_{T\in\mathcal{T}_h^\Gamma}\sum_{s=\pm}\|\nabla v_h^s\|^2_{L^2(\Gamma_T)}\lesssim h^3\sum_{T\in\mathcal{T}_h^\Gamma}\sum_{s=\pm}\|\nabla v_h^s\|^2_{L^2(T)}\\
&\lesssim h^{3}\sum_{T\in\mathcal{T}_h^\Gamma}|v_h|^2_{H^1(\cup T^\pm)},
\end{aligned}
\end{equation*}
where we used  the trace inequality (\ref{tac_ine}) on $T$ and Lemma~\ref{lem_vh_pm_vh}. 
\end{proof}

It follows from the Cauchy-Schwarz inequality, the global trace inequality, and the above lemma that  
\begin{equation*}
|(\mathrm{I})_2|\lesssim \|\nabla u^-\|_{L^2(\Gamma)}\|[v_h]_\Gamma\|_{L^2(\Gamma)}\lesssim h^{3/2} \|u\|_{H^2{(\Omega^-)}}\|v_h\|_h.
\end{equation*}

Combining the results above, we obtain the following lemma:
\begin{lemma}\label{lem_consis}
Let $u$  and $u_h$ be the solutions of (\ref{p1}) and (\ref{method2}), respectively. Then, it holds that
\begin{equation*}
|A_h(u-u_h,v_h)|\lesssim h\|u\|_{H^2(\cup\Omega^\pm)}\|v_h\|_h \quad\forall v_h\in V_{h,0}^{\rm IFE}.
\end{equation*}
\end{lemma}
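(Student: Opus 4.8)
The plan is to read the statement off directly from the error equation (\ref{eqn_consis}) together with the two bounds already prepared for its right-hand side. First I would recall the splitting $A_h(u-u_h,v_h)=(\mathrm{I})_1+(\mathrm{I})_2$ valid for every $v_h\in V_{h,0}^{\rm IFE}$, where $(\mathrm{I})_1=\sum_{F\in\mathcal{F}_h^{non}}\int_F \mathbb{B}\nabla u\cdot n_F[v_h]_F$ is the residual of the normal flux on non-interface faces and $(\mathrm{I})_2=-\int_\Gamma \mathbb{B}^-\nabla u^-\cdot n[v_h]_\Gamma$ is the interface residual. With this decomposition the whole proof reduces to bounding the two terms separately and adding them.

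For the face term I would simply invoke the estimate (\ref{est_I1}), which gives $|(\mathrm{I})_1|\lesssim h\|u\|_{H^2(\cup\Omega^\pm)}\|v_h\|_h$; this is the term that sets the final $O(h)$ rate. For the interface term I would apply the Cauchy--Schwarz inequality on $\Gamma$, then the global trace inequality to control $\|\nabla u^-\|_{L^2(\Gamma)}$ by $\|u\|_{H^2(\Omega^-)}$, which leaves the factor $\|[v_h]_\Gamma\|_{L^2(\Gamma)}$. Here the decisive ingredient is Lemma~\ref{lem_vh_gamma}, yielding $\|[v_h]_\Gamma\|_{L^2(\Gamma)}\lesssim h^{3/2}(\sum_{T\in\mathcal{T}_h^\Gamma}|v_h|^2_{H^1(\cup T^\pm)})^{1/2}\lesssim h^{3/2}\|v_h\|_h$, so that $|(\mathrm{I})_2|\lesssim h^{3/2}\|u\|_{H^2(\Omega^-)}\|v_h\|_h$. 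Since $h^{3/2}\lesssim h$ for the mesh sizes under consideration, adding the two bounds produces the claimed estimate, with $(\mathrm{I})_1$ being the dominant contribution.

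The genuinely delicate point is not this assembly but the half power of $h$ hidden inside Lemma~\ref{lem_vh_gamma} that renders $(\mathrm{I})_2$ harmless (i.e.\ $h^{3/2}$ rather than the naive $h^{1/2}$). The mechanism is that for an IFE function the jump $[\![v_h^\pm]\!]$ is affine and vanishes on the approximating plane $L_T$, so on $\Gamma_T$ it equals $[\![\nabla v_h^\pm\cdot\bar{n}_T]\!]\,d_{L_T}$; the $C^2$-smoothness of $\Gamma$ together with Assumption~\ref{assum_x0t} then forces $\|d_{L_T}\|_{L^\infty(\Gamma_T)}\lesssim h_T^2$. This quadratic smallness of $d_{L_T}$ on the true interface, quantitatively better than its $O(h)$ size across the whole element, is exactly what upgrades the bound and is the crux one must not lose. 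Once that is in hand, the only remaining care is that the per-element seminorms $|v_h|_{H^1(\cup T^\pm)}$ sum into $\|v_h\|_h$ through the definition of the $\|\cdot\|_h$-norm and Lemma~\ref{lem_vh_pm_vh}, after which the proof is complete.
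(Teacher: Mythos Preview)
Your argument is correct and follows the paper's proof essentially verbatim: the paper likewise invokes the error equation (\ref{eqn_consis}), bounds $(\mathrm{I})_1$ by (\ref{est_I1}), and handles $(\mathrm{I})_2$ via Cauchy--Schwarz, the global trace inequality, and Lemma~\ref{lem_vh_gamma} to obtain the $h^{3/2}$ bound. Your additional commentary on the $O(h_T^2)$ smallness of $d_{L_T}$ on $\Gamma_T$ accurately recapitulates the proof of Lemma~\ref{lem_vh_gamma}, though note that the passage from $\sum_{T\in\mathcal{T}_h^\Gamma}|v_h|^2_{H^1(\cup T^\pm)}$ to $\|v_h\|_h^2$ needs only (\ref{def_betaM}) and the definition of $\|\cdot\|_h$, not Lemma~\ref{lem_vh_pm_vh}.
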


\subsection{A priori error estimates}
We are now ready to derive the $H^1$-error estimate for the IFE method.
\begin{theorem}\label{theo_h1}
Let $u$  and $u_h$ be the solutions of (\ref{p1}) and (\ref{method2}), respectively. 
Assume $h$ is sufficiently small. Then, the following estimate holds
\begin{equation*}
\interleave u-u_h \interleave_h\lesssim h\|u\|_{H^2(\cup \Omega^\pm)}.
\end{equation*}
\end{theorem}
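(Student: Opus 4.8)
The plan is to use the standard Strang-type (second Strang lemma) framework for nonconforming methods, combining coercivity, consistency, and the optimal approximation property already established. Write the total error as the sum of an approximation error and a discrete error, $u-u_h = (u-\Pi_h^{\mathrm{IFE}}u - u_h^J) + (\Pi_h^{\mathrm{IFE}}u + u_h^J - u_h)$. The first piece is controlled directly by the interpolation estimate \eqref{int_err_00}, but I must measure it in the full augmented norm $\interleave\cdot\interleave_h$, not just in $\|\cdot\|_h$. The second piece lies in $V_{h,0}^{\rm IFE}$ up to the correction (note $u_h = u_h^{hom}+u_h^J$ and $\Pi_h^{\mathrm{IFE}}u$ differs from $u_h^{hom}$ by a member of $V_{h,0}^{\rm IFE}$), so it can be bounded using coercivity from Lemma~\ref{lem_Coercivity} together with the norm-equivalence from Lemma~\ref{lema_equ}.

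Concretely, I would set $w_h := \Pi_h^{\mathrm{IFE}}u + u_h^J - u_h \in V_{h,0}^{\rm IFE}$ and bound it by testing the bilinear form against itself:
\begin{equation*}
\tfrac{1}{2}\|w_h\|_h^2 \le A_h(w_h,w_h) = A_h(\Pi_h^{\mathrm{IFE}}u + u_h^J - u, w_h) + A_h(u-u_h, w_h).
\end{equation*}
The second term on the right is handled by the consistency estimate, Lemma~\ref{lem_consis}, giving a bound of order $h\|u\|_{H^2(\cup\Omega^\pm)}\|w_h\|_h$. The first term is handled by continuity \eqref{conti}, giving $\interleave \Pi_h^{\mathrm{IFE}}u + u_h^J - u\interleave_h \interleave w_h\interleave_h$; by Lemma~\ref{lema_equ} the norm $\interleave w_h\interleave_h$ is equivalent to $\|w_h\|_h$ since $w_h\in V_{h}^{\rm IFE}$, so I can divide through by $\|w_h\|_h$. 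This yields $\|w_h\|_h\lesssim \interleave u-\Pi_h^{\mathrm{IFE}}u - u_h^J\interleave_h + h\|u\|_{H^2(\cup\Omega^\pm)}$, and then the triangle inequality in the $\interleave\cdot\interleave_h$ norm finishes the estimate.

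The main obstacle is controlling the interpolation error $\interleave u - \Pi_h^{\mathrm{IFE}}u - u_h^J\interleave_h$ in the \emph{augmented} norm, since \eqref{int_err_00} only bounds the broken $H^1$ (and $L^2$) seminorms appearing in $\|\cdot\|_h$. I still need to estimate the three extra contributions in $\interleave\cdot\interleave_h$: the weighted face-average term $\sum_{F}h_F\|\{\mathbb{B}\nabla_h(\cdot)\}_F\|^2_{L^2(F)}$, the weighted jump term $\sum_{F}h_F^{-1}\|[\cdot]_F\|^2_{L^2(F)}$, and the stabilization $s_h(\cdot,\cdot)$. For the average term I would use trace inequalities (the trace inequality \eqref{tac_ine} together with Lemma~\ref{lema_trace}) applied to $u-\Pi_h^{\mathrm{IFE}}u-u_h^J$ on each interface face, reducing it to the already-controlled broken seminorms via \eqref{int_err_00}. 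For the jump term, since $u$ itself has jump $g_D$ across $\Gamma$ but is single-valued across each mesh face, and the interpolant and correction are designed to match these conditions, the face jumps of the error reduce to interpolation errors that are bounded using Lemma~\ref{lem_jum_less} and the face trace estimates. The stabilization term is bounded via Lemma~\ref{lem_stab_lift} in the same way. Throughout, the crucial point—which the earlier lemmas have arranged—is that every constant stays independent of how $\Gamma$ cuts the mesh, so the optimal $O(h)$ rate survives arbitrarily small cut elements.
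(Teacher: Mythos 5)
Your proposal is correct and follows essentially the same route as the paper's proof: the same splitting into interpolation error plus a discrete error $w_h\in V_{h,0}^{\rm IFE}$, the same use of coercivity (Lemma~\ref{lem_Coercivity}), continuity \eqref{conti}, consistency (Lemma~\ref{lem_consis}) and norm equivalence (Lemma~\ref{lema_equ}), followed by bounding the extra face, average, and stabilization terms of $\interleave u-(\Pi_h^{\mathrm{IFE}}u+u_h^J)\interleave_h$ via trace inequalities, Lemma~\ref{lem_xi_app} (through \eqref{int_err_00}), and Lemma~\ref{lem_stab_lift}. One small correction: for the face-jump and face-average terms of the interpolation error you should apply the standard elementwise trace inequality to $u_E^s-\xi_T^s$ (as the paper does), not Lemma~\ref{lem_jum_less} or Lemma~\ref{lema_trace}, since those hold only for IFE functions and $u-\Pi_h^{\mathrm{IFE}}u-u_h^J$ is not one.
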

\begin{proof}
Using the triangle inequality, we have 
\begin{equation}\label{pro_h1_01}
\interleave u-u_h \interleave_h\leq \interleave u-(\Pi_h^\mathrm{IFE} u+u_h^J) \interleave_h+\interleave (\Pi_h^\mathrm{IFE} u+u_h^J)-u_h \interleave_h
\end{equation}
Since $u_h=u_h^{hom}+u_h^J$, we can write 
\begin{equation*}
e_h^I:=(\Pi_h^\mathrm{IFE} u+u_h^J)-u_h=\Pi_h^\mathrm{IFE} u-u_h^{hom}\in V_{h,0}^{\mathrm{IFE}}.
\end{equation*}
It then follows from Lemmas~\ref{lema_equ} and \ref{lem_Coercivity} that 
\begin{equation*}
\begin{aligned}
\interleave e_h^I \interleave^2_h&\lesssim \|e_h^I\|^2_h\lesssim A_h((\Pi_h^\mathrm{IFE} u+u_h^J)-u_h,e_h^I)\\
&\lesssim A_h((\Pi_h^\mathrm{IFE} u+u_h^J)-u,e_h^I)+A_h(u-u_h,e_h^I),
\end{aligned}
\end{equation*}
which together with  (\ref{conti}),  Lemma~\ref{lem_consis} and  (\ref{pro_h1_01}) yields 
 \begin{equation*}
 \interleave u-u_h \interleave_h\lesssim \interleave u-(\Pi_h^\mathrm{IFE} u+u_h^J) \interleave_h+h\|u\|_{H^2(\cup\Omega^\pm)}.
\end{equation*}
It remains to estimate the first term on the right-hand side of the above inequality. On each interface element $T\in\mathcal{T}_h^\Gamma$, recall that $\xi_T^\pm=(\Pi_T^{\mathrm{IFE}} u+u_h^J|_T)^\pm$, where $\xi_T^\pm$ is defined by (\ref{dis_jp}). For simplicity of notation, let $\xi=\Pi_h^{\mathrm{IFE}} u+u_h^J$. By the standard trace inequality and Lemma~\ref{lem_xi_app}, we have
 \begin{equation*}
 \begin{aligned}
\sum_{F\in\mathcal{F}_h^\Gamma}h_F^{-1}\| [u-\xi]_F\|^2_{L^2(F)}&\leq \sum_{F\in\mathcal{F}_h^\Gamma}\sum_{s=\pm}h_F^{-1}\| [u_E^s-\xi^s]_F\|^2_{L^2(F)}\\
&\lesssim \sum_{s=\pm}\sum_{T\in\mathcal{T}_h^\Gamma}(h_T^{-2}\|u_E^s-\xi_T^s\|^2_{L^2(T)}+|u_E^s-\xi_T^s|^2_{H^1(T)})\\
&\lesssim h^2\sum_{s=\pm}\|u_E^s\|^2_{H^2(\Omega)}\lesssim h^2\|u\|^2_{H^2(\cup\Omega^\pm)}.
\end{aligned}
\end{equation*}
Using Lemma~\ref{lem_stab_lift}, we also have
\begin{equation*}
s_h(u-\xi,u-\xi)\lesssim h^2\|u\|^2_{H^2(\cup\Omega^\pm)}.
\end{equation*}
Similarly, we can show that
\begin{equation*}
\sum_{F\in\mathcal{F}_h^\Gamma}h_F\|\{\mathbb{B}\nabla_h (u-\xi)\}_F\|^2_{L^2(F)}\lesssim h^2\|u\|^2_{H^2(\cup\Omega^\pm)}.
\end{equation*}
Combining the above results with (\ref{int_err_00}) and using the definition of the $\interleave \cdot \interleave_h$-norm, we obtain
\begin{equation}\label{int_interleave_in}
\interleave u-(\Pi_h^\mathrm{IFE} u+u_h^J) \interleave_h\lesssim h\|u\|_{H^2(\cup\Omega^\pm)}.
\end{equation}
This completes the proof of the theorem.
\end{proof}

Next, we consider the $L^2$-error estimate. To achieve this, we utilize a $\delta$-strip argument to estimate functions in a tubular neighborhood of the interface.
Let $U_{\delta}(\Gamma)=\{ x\in\mathbb{R}^N : \mathrm{dist}( x,\Gamma)< \delta\}$ denote a tubular neighborhood of $\Gamma$ of thickness $\delta$. For sufficiently small $\delta$,  the following fundamental inequality holds \cite{huang2002mortar,Li2010Optimal}:
\begin{equation}\label{delta_strip}
\|v\|_{L^2(U_{\delta}(\Gamma))}\lesssim \sqrt{\delta}\|v\|_{H^1(U_{\delta_0}(\Gamma))},\quad \forall v\in H^1(U_{\delta_0}(\Gamma)),
\end{equation} 
where $\delta_0>0$ is a fixed constant.
\begin{theorem}
 Let $h$ be sufficiently small.  If $g_D=0$, we have
\begin{equation}\label{theo_l2_1}
\| u-u_h \|_{L^2(\Omega)} \lesssim h^2\|u\|_{H^2(\cup \Omega^\pm)}.
\end{equation}
If $g_D\not=0$, the error estimate becomes
\begin{equation}\label{theo_l2_2}
\| u-u_h \|_{L^2(\Omega)} \lesssim h^2(\|u\|_{H^2(\cup \Omega^\pm)}+\|u\|_{H^3(\cup U^\pm_{\delta_0}(\Gamma))}),
\end{equation}
where $\delta_0>0$ is a fixed constant and $U^\pm_{\delta_0}(\Gamma)=U_{\delta_0}(\Gamma)\cap \Omega^\pm$.
\end{theorem}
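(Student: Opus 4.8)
The plan is to run an Aubin--Nitsche duality argument. First I would introduce the dual problem: let $z\in H^2(\cup\Omega^\pm)$ solve $-\nabla\cdot(\mathbb{B}\nabla z)=u-u_h$ in $\Omega^+\cup\Omega^-$ with the \emph{homogeneous} jump conditions $[z]_\Gamma=0$, $[\mathbb{B}\nabla z\cdot n]_\Gamma=0$, and $z=0$ on $\partial\Omega$. The a priori estimate \eqref{regular} then gives $\|z\|_{H^2(\cup\Omega^\pm)}\lesssim\|u-u_h\|_{L^2(\Omega)}$, and because the dual data are homogeneous its correction function vanishes, so that $\Pi_h^{\mathrm{IFE}}z\in V_{h,0}^{\rm IFE}$ while $z$ is globally continuous with continuous flux across $\Gamma$.

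Writing $e:=u-u_h$, I would test the dual equation against $e$ and integrate by parts on each $T^\pm$. Using the continuity of $z$ and of $\mathbb{B}\nabla z\cdot n$ across $\Gamma$ (so that $[z]_F=0$ for every face and $s_h(\cdot,z)=0$), this produces the representation
\begin{equation*}
\|e\|_{L^2(\Omega)}^2=A_h(e,z)+\int_\Gamma(\mathbb{B}\nabla z\cdot n)[e]_\Gamma-\sum_{F\in\mathcal{F}_h^{non}}\int_F\{\mathbb{B}\nabla_h z\cdot n_F\}_F[e]_F .
\end{equation*}
I would then split $A_h(e,z)=A_h(e,z-\Pi_h^{\mathrm{IFE}}z)+A_h(e,\Pi_h^{\mathrm{IFE}}z)$. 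The first term is bounded by the continuity \eqref{conti}, Theorem~\ref{theo_h1}, and the interpolation bound \eqref{int_interleave_in}, giving $|A_h(e,z-\Pi_h^{\mathrm{IFE}}z)|\lesssim\interleave e\interleave_h\,\interleave z-\Pi_h^{\mathrm{IFE}}z\interleave_h\lesssim h^2\|u\|_{H^2(\cup\Omega^\pm)}\|e\|_{L^2(\Omega)}$. For the second term, since $\Pi_h^{\mathrm{IFE}}z\in V_{h,0}^{\rm IFE}$, the error equation \eqref{eqn_consis} replaces $A_h(e,\Pi_h^{\mathrm{IFE}}z)$ by the consistency functionals $(\mathrm{I})_1[\Pi_h^{\mathrm{IFE}}z]$ and $(\mathrm{I})_2[\Pi_h^{\mathrm{IFE}}z]$. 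The non-interface contributions, namely $(\mathrm{I})_1[\Pi_h^{\mathrm{IFE}}z]$ together with the last sum above, are $O(h^2)$ by the classical nonconforming argument: the face-average degrees of freedom force $\int_F[\Pi_h^{\mathrm{IFE}}z]_F=\int_F[e]_F=0$ on every $F\in\mathcal{F}_h^{non}$, so one subtracts the face-mean of the smooth fluxes and combines Poincar\'e with Lemma~\ref{lem_jum_less} and Theorem~\ref{theo_h1}.

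The main obstacle is the interface part $(\mathrm{I})_2[\Pi_h^{\mathrm{IFE}}z]+\int_\Gamma(\mathbb{B}\nabla z\cdot n)[e]_\Gamma$. Estimating it directly through $\|[\cdot]_\Gamma\|_{L^2(\Gamma)}$ (Lemma~\ref{lem_vh_gamma}) only yields $O(h^{3/2})$, which is half a power short of optimal. The crucial structural fact is that the interface jump of any IFE function, and (when $g_D=0$) of $[e]_\Gamma$ itself, is proportional to the signed distance $d_{L_T}$ of \eqref{def_d_LT}, which is $O(h_T^2)$ on $\Gamma_T$. I would therefore rewrite each such surface integral $\sum_T\int_{\Gamma_T}(\cdot)\,d_{L_T}$ as a volume integral over the thin region of thickness $O(h^2)$ enclosed between $\Gamma$ and the planes $L_T$, which lies in a tubular neighborhood $U_{Ch^2}(\Gamma)$. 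Applying the $\delta$-strip inequality \eqref{delta_strip} with $\delta\sim h^2$ to each factor gains a full power $\sqrt\delta\sim h$ and bounds the remaining near-interface $H^1$ factors by $h\|u\|_{H^2(\cup\Omega^\pm)}$ and $h\|z\|_{H^2(\cup\Omega^\pm)}$, so these terms are $O(h^2)\|u\|_{H^2(\cup\Omega^\pm)}\|e\|_{L^2(\Omega)}$. Since only $H^2$-regularity is used, dividing by $\|e\|_{L^2(\Omega)}$ already yields \eqref{theo_l2_1} in the case $g_D=0$.

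When $g_D\neq0$ there is exactly one extra piece, $\int_\Gamma(\mathbb{B}\nabla z\cdot n)[u-\xi]_\Gamma$ with $\xi=\Pi_h^{\mathrm{IFE}}u+u_h^J$, which is no longer proportional to $d_{L_T}$ because $[\xi]_\Gamma$ matches the Dirichlet jump $g_D=[u]_\Gamma$ only at the sample points $\tilde x_{i,T}$. This term is the surface interpolation error of $g_D$, which is $O(h^2)$ in $L^2(\Gamma)$ once $g_D\in H^2(\Gamma)$; by the trace theorem $|g_D|_{H^2(\Gamma)}\lesssim\|u\|_{H^3(\cup U^\pm_{\delta_0}(\Gamma))}$, and pairing with $\|\mathbb{B}\nabla z\cdot n\|_{L^2(\Gamma)}\lesssim\|z\|_{H^2(\cup\Omega^\pm)}\lesssim\|e\|_{L^2(\Omega)}$ reproduces precisely the additional term in \eqref{theo_l2_2}. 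Collecting all estimates and dividing by $\|e\|_{L^2(\Omega)}$ finishes the proof. I expect the surface-to-strip conversion and the sharp use of \eqref{delta_strip} to be the delicate point, since it is exactly what upgrades the naive $O(h^{3/2})$ interface bound to the optimal $O(h^2)$.
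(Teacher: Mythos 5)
Your overall route is the same as the paper's: the identical dual problem with homogeneous jump data and regularity bound \eqref{regular}, the same splitting $A_h(e,z)=A_h(e,z-\Pi_h^{\mathrm{IFE}}z)+A_h(e,\Pi_h^{\mathrm{IFE}}z)$ resolved through the error equation \eqref{eqn_consis}, the same treatment of the non-interface face terms, and the same structural fact that discrete interface jumps equal $[\![\nabla(\cdot)^\pm\cdot\bar n_T]\!]\,d_{L_T}$ with $|d_{L_T}|\lesssim h_T^2$ on $\Gamma_T$. The gap is in how you close the interface estimates. You propose to apply the $\delta$-strip inequality \eqref{delta_strip} ``to each factor'' with $\delta\sim h^2$, but in every one of these interface integrals at least one factor is a broken gradient of a discrete function ($\nabla_h\Pi_h^{\mathrm{IFE}}z$, $\nabla_h u_h$, or $[\![\nabla\xi_T^\pm]\!]$), which is elementwise constant and does not belong to $H^1(U_{\delta_0}(\Gamma))$; \eqref{delta_strip} simply does not apply to it, so the step fails as stated. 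The missing idea is to add and subtract the smooth counterpart ($z$ or $u_E^\pm$): the discrete-minus-smooth part is bounded by the interpolation estimates (a factor $h$ over the whole domain, no strip needed), and the strip inequality is applied only to the smooth part, over the layer of interface elements, i.e.\ with $\delta\sim h$, not $h^2$. This is exactly how the paper runs (\ref{pro_l2_03}), (\ref{pro_l2_05}) and (\ref{pro_l2_06}). Relatedly, your conversion of $\sum_T\int_{\Gamma_T}(\cdot)\,d_{L_T}$ into a volume integral over the $O(h^2)$-sliver between $\Gamma$ and the planes $L_T$ is both unnecessary and not rigorous as stated (it requires normal extensions of the integrands and control of their deviation); the pointwise bound on $d_{L_T}$ combined with surface Cauchy--Schwarz and element trace inequalities --- the content of Lemma~\ref{lem_vh_gamma} --- yields the same $h^{3/2}$ count, after which the add-subtract step recovers the last half power.

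In the case $g_D\neq0$ your characterization of the extra term is also inaccurate. On $\Gamma_T$ one has $[\![\xi_T^\pm]\!](x)=[\![\xi_T^\pm]\!](\mathbf{p}_{L_T}(x))+[\![\nabla\xi_T^\pm\cdot\bar n_T]\!]\,d_{L_T}(x)$, and \eqref{dis_jp_1} pins the plane values only at the $N$ points $\bar x_{i,T}$; hence $[u-\Pi_h^{\mathrm{IFE}}u-u_h^J]_\Gamma$ contains, besides the interpolation error of $g_D$, a component $[\![\nabla\xi_T^\pm\cdot\bar n_T]\!]\,d_{L_T}$ of exactly the Case-1 type, which still has to be pushed through the strip machinery with its discrete coefficient; moreover, the $O(h^2)$ bound in $L^2(\Gamma)$ for affine interpolation of $g_D$ at scattered points that you invoke is itself a nontrivial estimate (essentially what Steps 1--2 of the proof of Lemma~\ref{lem_xi_app} establish, including the plane-versus-surface mismatch \eqref{est_x_t_x_star}). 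The paper avoids this decomposition altogether: in (\ref{pro_l2_07}) it bounds $\|[u-\Pi_h^{\mathrm{IFE}}u-u_h^J]_\Gamma\|_{L^2(\Gamma)}$ by local trace inequalities plus Lemma~\ref{lem_xi_app}, obtaining $h^{3/2}\sum_{s=\pm}\|u_E^s\|_{H^2(U_\delta(\Gamma))}$, and then gains the final half power by applying \eqref{delta_strip} to the second derivatives of $u_E^s$ --- which is precisely where $\|u\|_{H^3(\cup U^\pm_{\delta_0}(\Gamma))}$ enters, consistent with (but cleaner than) your trace-theorem requirement $g_D\in H^2(\Gamma)$.
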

\begin{proof}
Let $e_h=u-u_h$. Consider the auxiliary problem
\begin{subequations}\label{auxi}
\begin{align}
&-\nabla\cdot(\mathbb{B}\nabla z)=e_h\qquad \qquad\mbox{ in }\Omega\backslash\Gamma,\\
&[z]_\Gamma=[\mathbb{B}\nabla z\cdot n]_\Gamma=0\qquad ~~\mbox{ on }\Gamma,\\
&z=0\qquad \qquad\qquad\qquad~~~~\mbox{ on }\partial \Omega.
\end{align}
\end{subequations}
Since $e_h\in L^2(\Omega)$ and $[z]_\Gamma=[\mathbb{B}\nabla z\cdot n]_\Gamma=0$, it follows from \eqref{regular} that
\begin{equation}\label{reg_L2}
z\in H^2(\cup \Omega^\pm)~~\mbox{ and }~~\|z\|_{H^2(\cup \Omega^\pm)}\lesssim \|e_h\|_{L^2(\Omega)}.
\end{equation}
Multiplying (\ref{auxi}) by $e_h$ and integrating by parts gives
\begin{equation*}
\begin{aligned}
\|e_h\|_{L^2(\Omega)}^2&=A_h(z,e_h)-\underbrace{\sum_{F\in\mathcal{F}_h^{non}}\int_F\mathbb{B}\nabla z\cdot n_F[e_h]_F}_{(\mathrm{II})_1}+\underbrace{\int_\Gamma \mathbb{B}^-\nabla z^-\cdot n [e_h]_\Gamma}_{(\mathrm{II})_2},
\end{aligned}
\end{equation*}
where we used $[z]_F=[\mathbb{B}\nabla z\cdot n_F]_F=s_h(z,u-u_h)=0$.
By (\ref{eqn_consis}) and the fact that $\Pi_h^{\mathrm{IFE}}z\in V_{h,0}^{\mathrm{IFE}}$,
\begin{equation*}
\begin{aligned}
&A_h(z,e_h)=A_h(z-\Pi_h^{\mathrm{IFE}}z,e_h)-A_h(\Pi_h^{\mathrm{IFE}}z,e_h)\\
&=\underbrace{A_h(z-\Pi_h^{\mathrm{IFE}}z,e_h)}_{(\mathrm{II})_3}-\underbrace{\sum_{F\in\mathcal{F}_h^{non}}\int_F \mathbb{B}\nabla u\cdot n_F [\Pi_h^{\mathrm{IFE}}z]_F}_{(\mathrm{II})_4}+\underbrace{\int_\Gamma \mathbb{B}^-\nabla u^-\cdot n[\Pi_h^{\mathrm{IFE}}z]_\Gamma}_{(\mathrm{II})_5}.
\end{aligned}
\end{equation*}
Combining these results, we obtain
\begin{equation}\label{pro_l2_00}
\|e_h\|_{L^2(\Omega)}^2\leq |(\mathrm{II})_1|+|(\mathrm{II})_2|+|(\mathrm{II})_3|+|(\mathrm{II})_4|+|(\mathrm{II})_5|.
\end{equation}

\textbf{Estimation of  term $(\mathrm{II})_3$}: 
Using (\ref{conti}), (\ref{int_interleave_in}) and Theorem~\ref{theo_h1}, we have
\begin{equation}\label{pro_l2_01}
|(\mathrm{II})_3|\leq \interleave z-\Pi_h^{\mathrm{IFE}}z \interleave_h \interleave e_h\interleave_h\lesssim h^2\|z\|_{H^2(\cup \Omega^\pm)}\|u\|_{H^2(\cup\Omega^\pm)},
\end{equation}
where we used $z_h^J=0$ (defined similarly to $u_h^J$) because $[z]_\Gamma=[\mathbb{B}\nabla z\cdot n]_\Gamma=0$.

\textbf{Estimation of  terms $(\mathrm{II})_1$ and $(\mathrm{II})_4$}: 
Using standard results from nonconforming FE analysis (see \cite[Chapter 10.3]{brenner2008mathematical}) and following a similar approach as in the estimation of (\ref{est_I1}), we derive
\begin{equation}\label{pro_l2_02}
|(\mathrm{II})_1|+|(\mathrm{II})_4|\lesssim h^2\|z\|_{H^2(\cup \Omega^\pm)}\|u\|_{H^2(\cup\Omega^\pm)}.
\end{equation}

\textbf{Estimation of  term $(\mathrm{II})_5$}: 
For all $T\in\mathcal{T}_h^\Gamma$, it holds that $T\subset U_{\delta}(\Gamma)$ with $\delta \lesssim h$. Using the Cauchy-Schwarz inequality, the global trace inequality,  Lemma~\ref{lem_vh_gamma}, and the $\delta$-strip argument (\ref{delta_strip}) with $\delta \lesssim h$, we obtain
\begin{equation}\label{pro_l2_03}
\begin{aligned}
|(\mathrm{II})_5|&\lesssim h^{3/2}\|u\|_{H^2(\Omega^-)}\|\nabla_h \Pi_h^{\mathrm{IFE}}z\|_{L^2(U_{\delta}(\Gamma))}\\
&\lesssim h^{3/2}\|u\|_{H^2(\Omega^-)}(\|\nabla_h (\Pi_h^{\mathrm{IFE}}z-z)\|_{L^2(\Omega)}+ \|\nabla_h z \|_{L^2(U_{\delta}(\Gamma))})\\
&\lesssim h^2\|u\|_{H^2(\Omega^-)}\|z\|_{H^2(\cup \Omega^\pm)}.
\end{aligned}
\end{equation}

\textbf{Estimation of  term $(\mathrm{II})_2$}: 
We first split the term into two parts: 
\begin{equation}\label{pro_l2_04}
\begin{aligned}
|(\mathrm{II})_2|\leq \left|\int_\Gamma \mathbb{B}^-\nabla z^-\cdot n [\Pi_h^{\mathrm{IFE}} u-u_h^{hom}]_\Gamma\right|+\left|\int_\Gamma \mathbb{B}^-\nabla z^-\cdot n [u-\Pi_h^{\mathrm{IFE}} u-u_h^J]_\Gamma\right|.
\end{aligned}
\end{equation}
By Lemma~\ref{lem_vh_gamma}, Theorem ~\ref{theo_h1} and (\ref{int_interleave_in}), the first term can be estimated as 
\begin{equation}\label{pro_l2_05}
\begin{aligned}
&\left|\int_\Gamma \mathbb{B}^-\nabla z^-\cdot n [\Pi_h^{\mathrm{IFE}} u-u_h^{hom}]_\Gamma\right|\lesssim h^{3/2}\|z\|_{H^2(\Omega^-)}\|\Pi_h^{\mathrm{IFE}} u-u_h^{hom}\|_h\\
&\qquad\qquad\lesssim h^{3/2}\|z\|_{H^2(\Omega^-)}\left\|(\Pi_h^{\mathrm{IFE}} u+u_h^J-u)+(u-u_h)\right\|_h\\
&\qquad\qquad\lesssim h^{5/2}\|z\|_{H^2(\Omega^-)}\|u\|_{H^2(\cup\Omega^\pm)}.
\end{aligned}
\end{equation}
It remains to estimate the second term.
For each interface element $T\in\mathcal{T}_h^\Gamma$, recall $\xi_T^\pm=(\Pi_T^{\mathrm{IFE}} u+u_h^J|_T)^\pm$, where $\xi_T^\pm$ is defined by (\ref{dis_jp}).

\textbf{Case 1}: $[u]_\Gamma=g_D=0$.  We have $[\![\xi_T^\pm]\!]|_{L_T}=0$ for all $T\in\mathcal{T}_h^\Gamma$, and thus, 
$[\Pi_h^{\mathrm{IFE}} u+u_h^J]_{\Gamma}(x)=[\![\nabla \xi_T^\pm]\!]d_{L_T}(x)$ for all $x\in\Gamma_T$.
Therefore, similar to the proof of Lemma~\ref{lem_vh_gamma}, we have
\begin{equation*}
\|[\Pi_h^{\mathrm{IFE}} u+u_h^J]_\Gamma\|^2_{L^2(\Gamma)}\lesssim h^3\sum_{T\in\mathcal{T}_h^\Gamma}\left|[\![\xi_T^\pm]\!] \right|^2_{H^1(T)}.
\end{equation*}
Now the second term can be estimated similarly to (\ref{pro_l2_03}), 
\begin{equation}\label{pro_l2_06}
\begin{aligned}
&\left|\int_\Gamma \mathbb{B}^-\nabla z^-\cdot n [u-\Pi_h^{\mathrm{IFE}} u-u_h^J]_\Gamma\right|=\left|\int_\Gamma \mathbb{B}^-\nabla z^-\cdot n [\Pi_h^{\mathrm{IFE}} u+u_h^J]_\Gamma\right|\\
&\qquad \lesssim h^{3/2}\|z\|_{H^2(\Omega^-)}\left(\sum_{T\in\mathcal{T}_h^\Gamma}\left|[\![\xi_T^\pm]\!] \right|^2_{H^1(T)}\right)^{1/2}\\
&\qquad \lesssim h^{3/2}\|z\|_{H^2(\Omega^-)}\left(\left(\sum_{T\in\mathcal{T}_h^\Gamma}\left|[\![\xi_T^\pm-u_E^\pm]\!] \right|^2_{H^1(T)}\right)^{1/2}+\left|[\![u_E^\pm ]\!]\right|_{H^1(U_{\delta}(\Gamma))}\right)\\
&\qquad \lesssim h^2\|z\|_{H^2(\Omega^-)}\|u\|_{H^2(\cup\Omega^\pm)},
\end{aligned}
\end{equation}
where we used Lemma~\ref{lem_xi_app} and the $\delta$-strip argument (\ref{delta_strip}) with $\delta \lesssim h$ in the last inequality. Combining (\ref{reg_L2})--(\ref{pro_l2_06}) yields the desired result (\ref{theo_l2_1}).

\textbf{Case 2}: $[u]_\Gamma=g_D\not = 0$. Using the trace inequality (\ref{tac_ine}) on $T$, Lemma~\ref{lem_xi_app}, and the $\delta$-strip argument (\ref{delta_strip}) with $\delta \lesssim h$, we get
\begin{equation}\label{pro_l2_07}
\begin{aligned}
&\left|\int_\Gamma \mathbb{B}^-\nabla z^-\cdot n [u-\Pi_h^{\mathrm{IFE}} u-u_h^J]_\Gamma\right|\lesssim \|z\|_{H^2(\Omega^-)}\|[u-\Pi_h^{\mathrm{IFE}} u-u_h^J]_\Gamma\|_{L^2(\Gamma)}\\
&\qquad \lesssim \|z\|_{H^2(\Omega^-)}\sum_{s=\pm}\left(\sum_{T\in\mathcal{T}_h^\Gamma}h_T^{-1}\|u^s_E-\xi_T^s\|^2_{L^2(T)}+h_T|u^s_E-\xi_T^s|^2_{H^1(T)}\right)^{1/2}\\
&\qquad \lesssim h^{3/2}\|z\|_{H^2(\Omega^-)}\sum_{s=\pm}\|u_E^s\|_{H^2(U_{\delta}(\Gamma))} \lesssim h^{2}\|z\|_{H^2(\Omega^-)}\sum_{s=\pm}\|u_E^s\|_{H^3(U_{\delta_0}(\Gamma))}\\
&\qquad \lesssim h^{2}\|z\|_{H^2(\Omega^-)}\|u\|_{H^3(\cup U^\pm_{\delta_0}(\Gamma))},
\end{aligned}
\end{equation}
which together with (\ref{reg_L2})--(\ref{pro_l2_05}) yields (\ref{theo_l2_2}). This completes the proof.
\end{proof}

\begin{remark}
The estimate (\ref{theo_l2_2}) for the case $g_D\not=0$ requires the exact solution $u$ to be slightly smoother near the interface.
A similar phenomenon occurs in the analysis of traditional FEMs when handling nonhomogeneous Dirichlet boundary conditions using nodal interpolation of the boundary data (see \cite{fix1983finite}).
\end{remark}

\section{Condition number analysis and preconditioners}\label{sec_cond}
Let $\{\phi_i\}_{i=1}^{\mathrm{ndof}}$ be the basis of  $V_{h,0}^{\rm IFE}$ and  ${\rm \mathbf{A}}$ be the stiffness matrix with entries 
${\rm \mathbf{A}}_{ij}=A_h(\phi_i,\phi_j)$ for all  $i,j=1,\cdots,\mathrm{ndof}$. 
Denote the standard nonconforming FE space by $V_{h,0}:=\{v : v|_T\in V_h(T) ~\forall T\in\mathcal{T}_h,~   \int_F [v]_F=0~ \forall F\in\mathcal{F}_h\}$, and define the operator $\widehat{\Pi}_h: V^{\mathrm{IFE}}_{h,0}\rightarrow  V_{h,0}$ by $(\widehat{\Pi}_hv)|_T=v|_T$ if $T\in\mathcal{T}_h^{non}$ and $(\widehat{\Pi}_hv)|_T=\widehat{\Pi}_Tv$ if $T\in\mathcal{T}_h^{\Gamma}$, where $\widehat{\Pi}_T$ is defined in (\ref{def_Pi_hat}). The stiffness matrix of the standard nonconforming FEM, denoted by ${\rm \mathbf{A}^{std}}$, can be computed by  ${\rm \mathbf{A}}_{ij}^{\rm std}=a_h(\widehat{\Pi}_h\phi_i,\widehat{\Pi}_h\phi_j)$. Suppose the family of triangulations is also quasi-uniform.
Then,  the $l_2$ condition number of the matrix satisfies  ${\rm cond}_2({\rm \mathbf{A}^{std}})\lesssim h^{-2}$.

Let $v_h=\sum_{i} \mathbf{v}(i)\phi_i$ with a vector $\mathbf{v}\in\mathbb{R}^{\mathrm{ndof}}$. Thus, $\widehat{\Pi}_hv_h=\sum_{i} \mathbf{v}(i)\widehat{\Pi}_h\phi_i$. It follows from (\ref{conti}) and Lemmas~\ref{lem_Coercivity}, \ref{lem_stand_IFE_eq} and \ref{lema_equ} that  
\begin{equation*}
\mathbf{v}^\top{\rm \mathbf{A}}\mathbf{v}=A_h(v_h,v_h)\sim \|v_h\|_h \sim \|\widehat{\Pi}_hv_h\|_h \sim a_h(\widehat{\Pi}_hv_h,\widehat{\Pi}_hv_h)=\mathbf{v}^\top{\rm \mathbf{A}^{std}}\mathbf{v}
\end{equation*}
and
\begin{equation*}
\|v_h\|^2_{L^2(\Omega)} \sim \|\widehat{\Pi}_hv_h\|^2_{L^2(\Omega)} \sim \mathbf{v}^\top\mathbf{v}.
\end{equation*}
Therefore, we have 
\begin{equation*}
{\rm cond}_2({\rm \mathbf{A}}) \sim {\rm cond}_2({\rm \mathbf{A}^{std}})\lesssim h^{-2},
\end{equation*}
which indicates that  the condition number of the stiffness matrix of the IFE method has the usual bound $O(h^{-2})$ with the hidden constant independent of the  interface location. 

Based on the above observation, we can conclude that  ${\rm \mathbf{A}}$ is  spectrally equivalent to ${\rm \mathbf{A}^{std}}$, and thus the preconditioners of the traditional nonconforming FEM can also be applied to the IFE method. However, the hidden constant in the above estimates depends on the contrast of the coefficient $\mathbb{B}^\pm$. If the contrast is small, using ${\rm \mathbf{A}^{std}}$ as a preconditioner works well. For high-contrast problems, however, specialized preconditioners are required to maintain efficiency and robustness. 

We first introduce the subdomain and the set of faces near the interface (see \cite{chu2024multigrid}):
\begin{equation*}
\Omega_{h}^{\Gamma}=\bigcup \{\overline{T} : \overline{T} \cap \overline{T}^\prime\not=\emptyset, T^\prime\in \mathcal{T}_h^{\Gamma},  T\in \mathcal{T}_h\},~~\mathcal{F}_h^{\Gamma, near}=\{F\in\mathcal{F}_h: F\subset \mathrm{int}(\Omega_{h}^{\Gamma}) \}.
\end{equation*}
Assuming the number of faces in $\mathcal{F}_h^{\Gamma, near}$ is $m_\Gamma$ and that the global indices of these faces are assigned at the beginning, we define ${\rm \mathbf{A}}^\Gamma$ with entries ${\rm \mathbf{A}}_{ij}^\Gamma = A_h(\phi_i, \phi_j)$ for all $i, j = 1, \dots, m_\Gamma$.  Let ${\rm \mathbf{I}}_l=(0,0,\cdots,1,\cdots,0)^\top\in \mathbb{R}^{\mathrm{ndof}}$ be a vector with a $1$ in the $l$-th position and ${\rm \mathbf{I}}_\Gamma=({\rm \mathbf{I}}_1,{\rm \mathbf{I}}_2,\cdots,{\rm \mathbf{I}}_{m_\Gamma},\mathbf{0},\cdots,\mathbf{0})$ be an ${\mathrm{ndof}}\times{\mathrm{ndof}}$ matrix.
The smoother matrix ${\rm \mathbf{R}}$ is defined in Algorithm \ref{smooth_ic} (see \cite{ludescher2020multigrid,chu2024multigrid}).
\begin{algorithm}
\caption{Gauss–Seidel smoother with interface correction}\label{smooth_ic}
\begin{algorithmic}[!h]
\STATE For any $\mathbf{g}\in\mathbb{R}^{\mathrm{ndof}}$, define ${\rm \mathbf{R}}\mathbf{g}$ as follows:
\STATE (1)  Set $\mathbf{v}_0=0$;
\STATE (2)  For $l=1,\cdots, \mathrm{ndof}$, define $\mathbf{v}_l=\mathbf{v}_{l-1}+{\rm \mathbf{I}}_l{\rm \mathbf{A}}_{ll}^{-1}{\rm \mathbf{I}}_l^\top(\mathbf{g}-{\rm \mathbf{A}}\mathbf{v}_{l-1})$;
\STATE (3)  For $l=\mathrm{ndof}+1$, define $\mathbf{v}_l=\mathbf{v}_{l-1}+{\rm \mathbf{I}}_\Gamma({\rm \mathbf{A}}^{\Gamma})^{-1}{\rm \mathbf{I}}_\Gamma^\top(\mathbf{g}-{\rm \mathbf{A}}\mathbf{v}_{l-1})$;
\STATE (4) Set ${\rm \mathbf{R}}\mathbf{g}=\mathbf{v}_{\mathrm{ndof}+1}$.
\end{algorithmic}
\end{algorithm}

We define the preconditioner for the stiffness matrix ${\rm \mathbf{A}}$ in Algorithm~\ref{preconditioner}.
\begin{algorithm}[!h]
\caption{Preconditioner matrix ${\rm \mathbf{B}}$ for the stiffness matrix ${\rm \mathbf{A}}$}\label{preconditioner}
\begin{algorithmic}
\STATE For any $\mathbf{g}\in\mathbb{R}^{\mathrm{ndof}}$, define ${\rm \mathbf{B}}\mathbf{g}$ as follows:
\STATE (1)  Set $\mathbf{v}_0=0$;
\STATE (2)  For $l=1,\cdots, n_s$, define $\mathbf{v}_l=\mathbf{v}_{l-1}+{\rm \mathbf{R}}(\mathbf{g}-{\rm \mathbf{A}}\mathbf{v}_{l-1})$;
\STATE (3)  For $l=n_s+1$, define $\mathbf{v}_l=\mathbf{v}_{l-1}+({\rm \mathbf{A}^{std}})^{-1}(\mathbf{g}-{\rm \mathbf{A}}\mathbf{v}_{l-1})$;
\STATE (4) For $l=n_s+2,\cdots, 2n_s+1$, define $\mathbf{v}_l=\mathbf{v}_{l-1}+{\rm \mathbf{R}}^\top(\mathbf{g}-{\rm \mathbf{A}}\mathbf{v}_{l-1})$;
\STATE (5) Set ${\rm \mathbf{B}}\mathbf{g}=\mathbf{v}_{2n_s+1}$.
\end{algorithmic}
\end{algorithm}

The inverse of ${\rm \mathbf{A}}^{\Gamma}$ is computed using a sparse direct solver based on sparse Cholesky factorization since it is of small size. The inverse of ${\rm \mathbf{A}^{std}}$, on the other hand, is computed using the Preconditioned Conjugate Gradient (PCG) method with the standard V-cycle multigrid algorithm as a preconditioner (see \cite{xu1992iterative}). To ensure insensitivity to high-contrast coefficients, the Gauss–Seidel smoother with interface correction is also employed in the standard V-cycle multigrid algorithm.

\section{Numerical experiments}\label{sec_num}
In this section, we present numerical experiments to demonstrate the performance of the proposed method. 
The resulting linear system is solved using the PCG method with the preconditioner defined in Algorithm~\ref{preconditioner}, where $n_s=1$ is set. The stopping criterion is that the relative residual error is less than $10^{-8}$, and the number of PCG iterations is denoted by ${\rm Iter_1}$ . 
For computing  $({\rm \mathbf{A}^{std}})^{-1}\mathbf{r}$ using the PCG method with a multigrid  preconditioner. The number of smoothing iterations in the V-cycle multigrid algorithm is set to $5$, and the stopping criterion is that either the absolute residual error or the relative residual error is less than $10^{-8}$, as the residual $\mathbf{r}$ may be very small during the algorithm. The maximum number of PCG iterations for different residuals $\mathbf{r}$ during the algorithm is denoted by  ${\rm Iter_2}$ .

To facilitate the construction of the exact solution, we replace (\ref{p1.4}) with the nonhomogeneous Dirichlet boundary condition $u|_{\partial \Omega}=g_{\partial \Omega}$. We examine the convergence rate of IFE solutions using the norms
$\|e_h\|_{L^2}:=\|u-u_h\|_{L^2(\Omega)}$ and   $|e_h|_{H^1}:=|\nabla_h(u-u_h)|_{L^2(\Omega)}$.

\textbf{Example 1}. We first test a two-dimensional example. We choose $\Omega=(-1,1)^2$ and 
 $\Gamma= \{(x,y) : x^2+y^2=0.5^2 \}$. The exact solution and the discontinuous coefficient are chosen as follows:

\begin{equation*}
\begin{aligned}
u^+(x,y)&=\ln (x^2+y^2),~ u^-(x,y)=\sin (x+y),\\
\mathbb{B}^+(x,y)&=
\left(
\begin{array}{ll}
\sin(x+y)+5& \cos (x+y)+2\\
\cos (x+y)+2& \sin(x+y)+10
\end{array}
\right)\beta_0^+,\\
\mathbb{B}^-(x,y)&=
\left(
\begin{array}{ll}
x^2+10& xy+2\\
xy+2& x^2y^2+5
\end{array}
\right)\beta_0^-,
\end{aligned}
\end{equation*}
where $\beta_0^+$ and $\beta_0^-$ are positive constants. The functions $f, g_N, g_D$ and $g_{\partial \Omega}$ are then determined by the above functions. We use uniform triangulations consisting of $2M^2$ congruent triangles. Numerical results reported in Tables~\ref{ex1_1}-\ref{ex1_3}  clearly show that the optimal convergence rates and the numbers of iterations are almost insensitive to the mesh size and the contrast of the coefficients.

\begin{table}[H]
\caption{Numerical results of  \textbf{Example 1} with $\beta_0^+=10^3$ and $\beta_0^-=1$. ${\rm Iter_1}$ denotes the number of PCG iterations for solving ${\rm \mathbf{AU}=\mathbf{F}}$. ${\rm Iter_2}$ denotes the maximum number of PCG iterations for solving ${\rm \mathbf{A}^{std}x}=\mathbf{r}$. The tolerance is set to be $10^{-8}$.\label{ex1_1}}
\begin{center}
\begin{tabular}{|c|c|c |c|}
\hline
     $M$  &   $\|e_h\|_{L^2}$ ~~(rate)  &    $|e_h|_{H^1} $~~(rate)        &      ${\rm Iter_1(Iter_2)}$      \\ \hline
        16   &   3.736E-02      (~---~)   &   6.806E-01     (~---~)  &      4       (--)  \\ \hline
        32   &   8.981E-03      (2.06)   &   3.538E-01      (0.94)    &     5         (8)     \\ \hline
        64   &   2.252E-03      (2.00)    &  1.701E-01      (1.06)    &     5         (9) \\ \hline
       128  &    5.393E-04     (2.06)   &   9.572E-02      (0.83)   &      5        (11) \\ \hline
       256  &    1.307E-04      (2.05)   &   4.566E-02      (1.07)   &      5        (13) \\ \hline
       512  &    3.229E-05      (2.02)   &   2.134E-02      (1.10)   &      5        (14) \\ \hline
      1024 &     7.956E-06      (2.02)  &    1.055E-02      (1.02)  &       5       (14)       \\ \hline
\end{tabular}
\end{center}
\end{table}

\begin{table}[H]
\caption{Numerical results of  \textbf{Example 1} with $\beta_0^+=1$ and $\beta_0^-=10^3$.\label{ex1_2}}
\begin{center}
\begin{tabular}{|c|c|c |c|}
\hline
     $M$  &   $\|e_h\|_{L^2}$ ~~(rate)  &    $|e_h|_{H^1} $ ~~(rate)        &      ${\rm Iter_1(Iter_2)}$       \\ \hline
       16   &   2.879E-02      (~---~)  &          6.076E-01    (~---~)     &    4         (--)  \\ \hline
        32  &    7.542E-03      (1.93)     & 3.161E-01      (0.94)    &     5         (7)  \\ \hline
        64  &    1.886E-03      (2.00)     & 1.586E-01      (1.00)    &    5        (10)  \\ \hline
       128 &     4.864E-04     (1.96)    &  8.014E-02      (0.98)   &    5        (11)  \\ \hline
       256 &     1.229E-04     (1.98)    &  4.020E-02      (1.00)   &    5        (12)  \\ \hline
       512 &     3.116E-05      (1.98)    &  2.016E-02      (1.00)   &     5        (14)  \\ \hline
      1024&      7.893E-06     (1.98)   &   1.010E-02      (1.00)  &      5        (15)  \\ \hline
\end{tabular}
\end{center}
\end{table}

\begin{table}[H]
\caption{Numerical results of  \textbf{Example 1} with $\beta_0^+=2$ and $\beta_0^-=1$.\label{ex1_3}}
\begin{center}
\begin{tabular}{|c|c|c |c|}
\hline
     $M$  &   $\|e_h\|_{L^2}$ ~~(rate)  &    $|e_h|_{H^1} $~~ (rate)        &      ${\rm Iter_1(Iter_2)}$       \\ \hline
        16  &    3.092E-02     (~---~)  &    6.166E-01    (~---~)   &      3         (--)\\ \hline
        32   &   7.950E-03      (1.96)    &  3.156E-01      (0.97)      &   3         (7)\\ \hline
        64   &   1.977E-03      (2.01)   &   1.595E-01      (0.98)      &   3         (7)\\ \hline
       128  &    5.011E-04      (1.98)  &    8.032E-02      (0.99)     &    4         (7)\\ \hline
       256  &    1.253E-04      (2.00)  &    4.031E-02      (0.99)     &    4         (7)\\ \hline
       512  &    3.134E-05      (2.00)  &    2.019E-02      (1.00)     &    4         (7)\\ \hline
      1024  &    7.828E-06      (2.00)  &    1.010E-02      (1.00)     &    4        (7)\\ \hline
\end{tabular}
\end{center}
\end{table}

\textbf{Example 2}. We test a three-dimensional example, specifically Example 3 from \cite{hou2013weak}.
The initial mesh $\mathcal{T}_0$ is generated by first uniformly partitioning the computational domain $\Omega=(-1,1)^3$ into $5^3$ cubes, followed by subdividing each cube into six tetrahedra. 
The mesh $\mathcal{T}_l$ ($l>1$) is a uniform refinement of $\mathcal{T}_{l-1}$, achieved by subdividing each tetrahedron in $\mathcal{T}_{l-1}$ into eight tetrahedra (see \cite{bey1995tetrahedral}).
Similar numerical results are reported in Table~\ref{ex2_1}.

\begin{table}[H]
\caption{Numerical results of  \textbf{Example 2}.\label{ex2_1}}
\begin{center}
\begin{tabular}{|c|c|c |c|}
\hline
     $l$  &   $\|e_h\|_{L^2}$ (rate)  &    $|e_h|_{H^1} $ (rate)        &      ${\rm Iter_1(Iter_2)}$       \\ \hline
          0  &    9.224E-02       (~---~)  &   8.687E-01    (~---~)    &     2   (--)\\ \hline
         1   &   2.725E-02      (1.76 )  &   4.363E-01      (0.99)    &     2         (7)\\ \hline
         2   &   7.023E-03      (1.96)   &   2.162E-01      (1.01)    &     2         (9)\\ \hline
         3   &   1.755E-03      (2.00)   &   1.076E-01      (1.01)    &     3        (10)\\ \hline
\end{tabular}
\end{center}
\end{table}

\section{Conclusions}\label{sec_con}
In this paper, we developed an IFE method for interface problems with anisotropic coefficients and nonhomogeneous jump conditions based on unfitted meshes. The proposed IFE method maintains the same degrees of freedom as traditional nonconforming FEMs while incorporating the jump conditions into the IFE space to achieve optimal approximation.
We provided a complete theoretical analysis of the method, including the existence and uniqueness of the IFE space, optimal error estimates of the IFE method, and the usual bound on the condition number of the stiffness matrix.
To efficiently solve the resulting linear system, we proposed a preconditioner that employs a Gauss-Seidel smoother with interface correction, ensuring robustness against large jumps in the diffusion coefficients. Numerical experiments verified the optimal convergence of our method and demonstrated the effectiveness of the preconditioner.

\appendix 
\section{Proof of Lemma~\ref{lem_xi_app}}\label{sec_proof}
Let $J_T$ be the $L^2$ projection operator onto $\mathbb{P}_1(\mathscr{B}_T)$. Recalling that $\mathscr{B}_T$ is a ball satisfying $\mathrm{diam}(\mathscr{B}_T)\lesssim h_T$, we have the following well-known properties of $J_T$:
\begin{subequations}\label{esti_pro_L2}
\begin{align}
&|v-J_Tv|_{H^m(\mathscr{B}_T)}\lesssim h_T^{2-m}|v|_{H^2(\mathscr{B}_T)} \quad \forall v\in H^2(\mathscr{B}_T), \quad m=0,1,\label{esti_pro_L2_1}\\
&|v-J_Tv|_{L^\infty(\mathscr{B}_T)}\lesssim h_T^{2-N/2}|v|_{H^2(\mathscr{B}_T)}\quad \forall v\in H^2(\mathscr{B}_T), \label{esti_pro_L2_2}\\
&|J_T v|_{H^1(\mathscr{B}_T)}\lesssim |v|_{H^1(\mathscr{B}_T)} \quad \forall v\in H^1(\mathscr{B}_T).\label{esti_pro_L2_3}
\end{align}
\end{subequations}
To prove (\ref{result_lem_xi}), we add and subtract $J_{T}u_E^\pm$ and use the triangle inequality to get
\begin{equation}\label{pro_int_er_00}
|u_E^\pm-\xi_T^\pm|_{H^m(T)}\leq |u_E^\pm-J_{T}u_E^\pm|_{H^m(T)}+ |J_{T}u_E^\pm-\xi_T^\pm|_{H^m(T)}.
\end{equation}
The first term on the right-hand side can be estimated directly by (\ref{esti_pro_L2_1}), so our main task is to estimate the second term. The proof proceeds in five steps.

\textbf{Step 1 (Decomposition).} Since $(J_{T}u_E^+-\xi_T^+,J_{T}u_E^--\xi_T^-)\in \mathbb{P}_1(\mathbb{R}^N)\times \mathbb{P}_1(\mathbb{R}^N)$, we decompose it as
\begin{equation*}
(J_{T}u_E^+-\xi_T^+,J_{T}u_E^--\xi_T^-)=\sum_{i=0}^{N}(\psi_{i,T}^+,\psi_{i,T}^-)\alpha_{i,T}+\sum_{F\in\mathcal{F}_T}(\phi_{F,T}^+,\phi_{F,T}^-)\gamma_{F,T},
\end{equation*} 
where $\psi_{i,T}^\pm$ and $\phi_{F,T}^\pm$ are defined in (\ref{def_psi_phi1}) and (\ref{def_psi_phi2}) , and $\alpha_{i,T}$ and $\gamma_{F,T}$ are constants defined by
\begin{equation*}
\alpha_{i,T}=\mathcal{J}_{i,T}(J_{T}u_E^+-\xi_T^+,J_{T}u_E^--\xi_T^-),\quad \gamma_{F,T}=\mathcal{M}_{F,T}(J_{T}u_E^+-\xi_T^+,J_{T}u_E^--\xi_T^-).
\end{equation*}
By (\ref{esti_basis}) and (\ref{esti_aux}), we  have the estimate 
\begin{equation}\label{pro_int_er_01}
|J_{T}u_E^\pm-\xi_T^\pm|_{H^m(T)}\lesssim h_T^{N/2-m}\left(\sum_{i=0}^{N-1}|\alpha_{i,T}|+\sum_{F\in\mathcal{F}_T}|\gamma_{F,T}|\right)+h_T^{N/2+1-m}|\alpha_{N,T}|
\end{equation}

\textbf{Step 2 (Estimating $\alpha_{i,T}$, $i=0,\cdots, N-1$).} Using (\ref{def_J}), (\ref{dis_jp_1}) and (\ref{p1.2}), we get
\begin{equation*}
\begin{aligned}
\alpha_{i,T}&=\mathcal{J}_{i,T}(J_{T}u_E^+-\xi_T^+,J_{T}u_E^--\xi_T^-)=\mathcal{J}_{i,T}(J_{T}u_E^+,J_{T}u_E^-)-\mathcal{J}_{i,T}(\xi_T^+,\xi_T^-)\\
&=[\![J_{T}u_E^\pm]\!](\bar{x}_{i,T})-g_D(\tilde{x}_{i,T})=[\![J_{T}u_E^\pm]\!](\bar{x}_{i,T})-[\![u_E^\pm]\!](\tilde{x}_{i,T})\\
&=[\![J_{T}u_E^\pm]\!](\tilde{x}_{i,T})-[\![u_E^\pm]\!](\tilde{x}_{i,T})+\left([\![J_{T}u_E^\pm]\!](\bar{x}_{i,T})-[\![J_{T}u_E^\pm]\!](\tilde{x}_{i,T})\right)\\
&=[\![J_{T}u_E^\pm]\!](\tilde{x}_{i,T})-[\![u_E^\pm]\!](\tilde{x}_{i,T})+ [\![\nabla (J_{T}u_E^\pm)]\!]\cdot (\bar{x}_{i,T}-\tilde{x}_{i,T}).
\end{aligned}
\end{equation*}
Using (\ref{est_x_t_x_star}), (\ref{esti_pro_L2_2}) and (\ref{esti_pro_L2_3}), we obtain for $i=0,\cdots, N-1$ that
\begin{equation*}
\begin{aligned}
|\alpha_{i,T}|&\lesssim \sum_{s=\pm}\|J_{T}u_E^s-u_E^s\|_{L^\infty(\mathscr{B}_T)}+h_T^{2-N/2}\sum_{s=\pm}|J_Tu_E^s|_{H^1(\mathscr{B}_T)}\\
& \lesssim h_T^{2-N/2}\sum_{s=\pm}\|u_E^s\|_{H^2(\mathscr{B}_T)}.
\end{aligned}
\end{equation*}

\textbf{Step 3 (Estimating $\alpha_{N,T}$).}
It follows from  (\ref{def_J}), (\ref{dis_jp_1})  and (\ref{p1.3}) that
\begin{equation*}
\begin{aligned}
|\alpha_{N,T}|&=\left|[\![\mathbb{B}_T^\pm \nabla (J_T u_E^\pm) \cdot \bar{n}_T]\!]-\mathrm{avg}_{\Gamma_{T}^{ext}}([\mathbb{B}\nabla u\cdot n]_\Gamma)\right|\\
&=\left|\mathrm{avg}_{\Gamma_{T}^{ext}}\left([\![\mathbb{B}_T^\pm \nabla (J_T u_E^\pm)\cdot \bar{n}_T]\!]-[\mathbb{B}\nabla u\cdot n]_\Gamma\right)\right|\\
&\lesssim \sum_{s=\pm}\mathrm{avg}_{\Gamma_{T}^{ext}}\left(\left|\bar{n}_T^\top\mathbb{B}_T^s \nabla (J_T u_E^s)-n^\top\mathbb{B}^s\nabla u^s_E\right|\right),
\end{aligned}
\end{equation*}
where we used the fact that $[\![\mathbb{B}_T^\pm \nabla (J_T u_E^\pm) \cdot \bar{n}_T]\!]$ is a constant in the second identity. 
By (\ref{esti_B}) we deduce that for all $x\in \Gamma_{T}^{ext}$,
\begin{equation*}
\begin{aligned}
&\left|\bar{n}_T^\top\mathbb{B}_T^s \nabla (J_T u_E^s)-(n^\top\mathbb{B}^s\nabla u^s_E)(x)\right|\\
&\qquad\leq \left|\bar{n}_T^\top\mathbb{B}_T^s \nabla (J_T u_E^s)-\bar{n}_T^\top\mathbb{B}_T^s\nabla u^s_E\right|+ \left|(\bar{n}_T^\top\mathbb{B}_T^s-n^\top\mathbb{B}^s)\nabla u^s_E\right|\\
&\qquad\lesssim \left| \nabla (J_T u_E^s)-\nabla u^s_E\right|+ h_T\left|\nabla u^s_E\right|.
\end{aligned}
\end{equation*}
Combining the above results and using (\ref{trace_avg}) and (\ref{esti_pro_L2_1}) yields
\begin{equation*}
\begin{aligned}
|\alpha_{N,T}|&\lesssim  \sum_{s=\pm}\left(\mathrm{avg}_{\Gamma_{T}^{ext}}(\left| \nabla (J_T u_E^s)-\nabla u^s_E\right|)+h_T\mathrm{avg}_{\Gamma_{T}^{ext}}(\left|\nabla u^s_E\right|)\right)\\
&\lesssim \sum_{s=\pm}\left(h_T^{-N/2}\| \nabla (J_T u_E^s)-\nabla u^s_E\|_{L^2(\mathscr{B}_T)}+h_T^{1-N/2}(| u^s_E|_{H^2(\mathscr{B}_T)}+| u^s_E|_{H^1(\mathscr{B}_T)})\right)\\
&\lesssim h_T^{1-N/2}\sum_{s=\pm}\|u_E^s\|_{H^2(\mathscr{B}_T)}
\end{aligned}
\end{equation*}

\textbf{Step 4 (Estimating $\gamma_{F,T}$).} 
By (\ref{def_MFT}) and (\ref{dis_jp_2}), we have for all $F\in\mathcal{F}_T$ that 
\begin{equation*}
\begin{aligned}
\gamma_{F,T}&=\mathcal{M}_{F,T}(J_{T}u_E^+-\xi_T^+,J_{T}u_E^--\xi_T^-)=\mathcal{M}_{F,T}(J_{T}u_E^+-u_E^+,J_{T}u_E^--u_E^-)\\
&= |F|^{-1}\sum_{s=\pm}\int_{F\cap\partial T_h^s}J_{T}u_E^s-u_E^s.
\end{aligned}
\end{equation*}
By H\"older's inequality, the trace inequality and (\ref{esti_pro_L2_1}), we deduce that 
\begin{equation*}
\begin{aligned}
|\gamma_{F,T}|&\leq |F|^{-1}\sum_{s=\pm}\int_{F\cap\partial T_h^s}|J_{T}u_E^s-u_E^s|\leq |F|^{-1}\sum_{s=\pm}\int_{F}|J_{T}u_E^s-u_E^s|\\
&\leq |F|^{-1/2}\sum_{s=\pm}\|J_{T}u_E^s-u_E^s\|_{L^2(F)}\\
&\lesssim  |F|^{-1/2}\sum_{s=\pm}\left(h_T^{-1/2}\|J_{T}u_E^s-u_E^s\|_{L^2(T)}+h_T^{1/2}|J_{T}u_E^s-u_E^s|_{H^1(T)}\right)\\
& \lesssim h_T^{2-N/2}\sum_{s=\pm}\|u_E^s\|_{H^2(\mathscr{B}_T)},
\end{aligned}
\end{equation*}
where in the last inequality we also used  $|F| \gtrsim h_T^{(N-1)}$ for all $F\in\mathcal{F}_T$ since the  triangulation is  shape regular.

\textbf{Step 5.} 
Finally, substituting the estimates of $\alpha_{i,T}$ and $\gamma_{F,T}$ in above steps into (\ref{pro_int_er_01}) we get 
\begin{equation*}
|J_{T}u_E^\pm-\xi_T^\pm|_{H^m(T)}\lesssim h_T^{2-m}\sum_{s=\pm}\|u_E^s\|_{H^2(\mathscr{B}_T)},
\end{equation*}
which together with (\ref{pro_int_er_00}) and (\ref{esti_pro_L2_1}) implies the desired result (\ref{lem_xi_app}).


\bibliographystyle{plain}
\bibliography{refer}
\end{document}